\numberwithin{equation}{section}
\numberwithin{figure}{section}
\newtheorem{thm}{Theorem}[section]
\newtheorem{cor}[thm]{Corollary}
\theoremstyle{definition}
\theoremstyle{plain}
\theoremstyle{plain}
\newtheorem{prop}[thm]{Proposition}
\theoremstyle{definition}
\newtheorem{rem}[thm]{Remark}
\theoremstyle{plain}
\theoremstyle{plain}
\newtheorem{assumptionA}{Assumption}
\theoremstyle{plain}
\newtheorem{lemma}[thm]{Lemma}
\theoremstyle{plain}
\newcommand{\E}{\mathbb{E}}
\begin{document}

\title{Uniform sampling in a structured branching population}

\author{Aline Marguet\footnote{Univ. Grenoble Alpes, Inria, 38000 Grenoble, France, aline.marguet@inria.fr}}
\maketitle
\date{}

\begin{abstract}
We are interested in the dynamic of a structured branching population where the trait of each individual moves according to a Markov process. The rate of division of each individual is a function of its trait and when a branching event occurs, the trait of the descendants at birth depends on the trait of the mother and on the number of descendants. In this article, we explicitly describe the penalized Markov process, named auxiliary process, corresponding to the dynamic of the trait of a "typical" individual by giving its associated infinitesimal generator. We prove a Many-to-One formula and a Many-to-One formula for forks.
 Furthermore, we prove that this auxiliary process characterizes exactly the process of the trait of a uniformly sampled individual in a large population approximation. We detail three examples of growth-fragmentation models: the linear growth model, the exponential growth model and the parasite infection model.
\end{abstract}

\paragraph*{Keywords:}
Branching Markov processes, Many-to-One formulas, Size-biased reproduction law.
\paragraph*{A.M.S classification:}
60J80, 60J85, 60J75, 92D25.

\section{Introduction}
The characterization of the sampling of individuals in a population is a key issue for branching processes with several motivations in statistics and biology. We refer to the work of Durrett \cite{durrett1978genealogy} and references therein for the study of the genealogy of a branching Markov process and the study of the degree of relationship between $k$ individuals chosen randomly at time $t$ in the population. In particular, he analyzed the asymptotics of the so-called reduced branching process $N_t(s)$ defined as the number of individuals alive at time $s$ which have offspring alive at time $t$. An approximation of this process by a pure birth process is given in \cite{o1995genealogy}. The question of finding the coalescing time of individuals in a Galton-Watson tree is addressed in \cite{zubkov1976limiting} and the coalescent structure of continuous-time Galton-Watson trees is studied in \cite{harris2017coalescent}. We refer to \cite{athreya2012coalescence} and \cite{lambert2013coalescent} for more results on this question and to \cite{hong2011coalescence} for results concerning the Bellman-Harris branching process. The pedigree of a typical individual in a supercritical branching process has also been investigated asymptotically for multi-type branching processes with a finite number of types in \cite{georgii2003supercritical}, with i.i.d lifetimes in \cite{athreya2011} and with an age-structure in \cite{nerman1984stable}. The characterization of the sampling is the key to obtain asymptotic results on the branching process (\cite{kurtz1997conceptual}, \cite{bansaye2011limit}, \cite{cloez}) and to infer the parameters of the model (\cite{guyon2007limit}, \cite{doumic2015statistical}, \cite{hoffmann2015nonparametric}). 

In this article, we consider a continuous-time structured branching Markov process where the trait of each individual moves according to a Markov process and influences the branching events.
The purpose of this article is to characterize the trait of a typical individual uniformly sampled from the population at time $t$ and its associated ancestral lineage. In particular, we exhibit the bias due to the structure of the population and to the sampling. We also describe the traits of a uniformly sampled couple in the current population. Therefore, we provide new applications in a non-neutral framework for cell division (Section \ref{sec:exdebut}), even for models in a varying environment.

We now describe informally the process, while its rigorous construction and characterization as a c\`adl\`ag measure-valued process under Assumptions \ref{assu:debut1} and \ref{assu:debut2} are detailed in Section \ref{sec:exis}. We assume that individuals behave independently and that for each individual $u$:
\begin{itemize}
\item its trait $(X_t^u)_{t\geq 0}$ evolves as an $\mathcal{X}$-valued Markov process with infinitesimal generator $\left(\mathcal{G},\mathcal{D}(\mathcal{G})\right)$, where $\mathcal{X}\subset\mathbb{R}^d$ is a measurable space for some $d\geq 1$,
\item it dies at time $t$ at rate $B(X_t^u)$,
\item at death, an individual with trait $x$ is replaced by $k\in\mathbb{N}$ individuals with probability $p_k(x)$ and $m(x)=\sum_{k\geq 1}kp_k(x)$,
\item the trait of the $j$th child among $k$ is distributed as $P_j^{(k)}(x,\cdot)$ for all $1\leq j\leq k$. 
\end{itemize}

We use the notion of spine, which is a distinguished line of descent in the branching process, and Many-to-One formulas, which have been developed from the notion of size-biased tree, considered by Kallenberg \cite{kallenberg1977stability}, Chauvin and Rouault \cite{chauvin1988kpp}, Chauvin, Rouault and Wakolbinger \cite{chauvin1991growing} with a Palm measure approach and Lyons, Peres and Pemantle \cite{lyons1995conceptual}. For general results on branching processes using these techniques, including the spinal decomposition, we refer to \cite{kurtz1997conceptual} and \cite{athreya2000change} for discrete-time models and to \cite{georgii2003supercritical}, \cite{hardy2009spine} and \cite{cloez} for continuous-time branching processes. These previous works ensure in particular that if we denote by $V_t$ the set of individuals alive at time $t$ and by $N_t$ its cardinal, we have the well-known Many-to-One formula:
\begin{align}\label{eq:mto_poid}
\mathbb{E}\left[\sum_{u\in V_t} f\left(X_t^u\right)\right]=\mathbb{E}\left[f\left(Y_t\right)e^{\int_0^t B(Y_s)(m(Y_s)-1)ds}\right],
\end{align}
where $f$ is a non-negative measurable function and $(Y_t)_{t\geq 0}$ follows the dynamic of a tagged-particle i.e. the same dynamic of all the particles between jumps and at a jump, the unique daughter particle is chosen uniformly at random among all the daughter particles. This formula can be seen as a Feynman-Kac formula  \cite[Section 1.3]{del2004feynman} with a weight on the right-hand side relying on the whole ancestral lineage of current individuals which corresponds to the growth of the population. In this case, under spectral assumptions, the asymptotic behavior of the number of individuals has been well studied in \cite{lyons1995conceptual}, \cite{kurtz1997conceptual}, \cite{athreya2000change}, \cite{georgii2003supercritical} and \cite{biggins2004measure}. We also refer to the work of Bansaye and al. \cite{bansaye2011limit} for law of large numbers theorems using Many-to-One formulas.

On the right-hand side of \eqref{eq:mto_poid} appears a Markov process with penalized (or rewarded) trajectories which describes the dynamic of the trait of a typical individual. This corresponds to a time-inhomogeneous Markov process $Y^{(t)}$, indexed by $t\geq 0 $, for which we provide the following formula for any non-negative measurable function $F$ on the space of c\`adl\`ag processes:
\begin{align}\label{eq:mto_intro}
\mathbb{E}\left[\sum_{u\in V_t} F\left(X_s^u,s\leq t\right)\right]=m(x,0,t)\mathbb{E}\left[F\left(Y_s^{(t)},s\leq t\right)\right],
\end{align} 
where for $x\in\mathcal{X}$ and $0\leq s\leq t$,
\begin{align}\label{def:mt}
m(x,s,t):=\mathbb{E}\left[N_t\big|Z_s=\delta_x\right],
\end{align}
and 
\begin{align*}
Z_t=\sum_{u\in V_t} \delta_{X_t^u},
\end{align*}
is the empirical measure of the process. We explicit the generator $\left(\mathcal{A}_s^{(t)}\right)_{s\leq t}$ of this auxiliary process: for all well-chosen functions $f$, $x\in\mathcal{X}$ and $s<t$, we have
\begin{align*}
\mathcal{A}_s^{(t)}f(x)=\widehat{\mathcal{G}}_{s}^{(t)}f(x)+\widehat{B}_{s}^{(t)}(x)\int_{\mathcal{X}}(f(y)-f(x))\widehat{P}_{s}^{(t)}(x,dy),
\end{align*}
where
\begin{align*}
\widehat{\mathcal{G}}_{s}^{(t)}f(x)=\frac{\mathcal{G}\left(m(\cdot,s,t)f\right)(x)-f\left(x\right)\mathcal{G}\left(m(\cdot,s,t)\right)(x)}{m(x,s,t)},
\end{align*}
\begin{align*}
\widehat{B}_{s}^{(t)}(x)=B(x)\int_{\mathcal{X}}\frac{m(y,s,t)}{m(x,s,t)}m(x,dy),
\end{align*}
\begin{align*}
\widehat{P}_{s}^{(t)}(x,dy)=m(y,s,t)m(x,dy)\left(\int_{\mathcal{X}}m(y,s,t)m(x,dy)\right)^{-1},
\end{align*}
and
\[
m(x,A):=\sum_{k\geq 0}p_{k}(x)\sum_{j=1}^{k}P_{j}^{(k)}(x,A),
\]
denotes the expected number of children with trait in the Borel set $A$ of an individual with trait $x$.

Moreover, we give some very simple and interesting examples where we can find the expression of the generator of the auxiliary process: we detail three models for the dynamic of a cell population (see Section \ref{sec:exdebut}).

 The Many-to-One formula \eqref{eq:mto_intro} splits the behavior of the entire population into a term characterizing the growth of the population and a term characterizing the dynamic of the trait. This separation in two terms is the key to the study of the ancestral trait of a uniformly sampled individual. Indeed, we prove in Theorem \ref{th:grdpop}, that the auxiliary process describes the ancestral lineage of a sampled individual in a branching population at a fixed time when the initial population is large. More precisely, if we denote by $X^{U(t),\nu}$ the trait of a uniformly sampled individual from a population at time $t$ with initial distribution $\nu$ and if $\nu_n=\sum_{i=1}^n \delta_{X_i}$ where $X_i$ are i.i.d. random variables with law $\nu$, under some assumptions, we prove the following convergence in law:
\begin{align}\label{eq:conv_intro}
 X_{[0,t]}^{U(t),\nu_n}\underset{n\rightarrow +\infty}{\longrightarrow} Y_{[0,t]}^{(t),\pi_t},\text{ where }
\pi_t(dx)=\frac{\E(N_t\big| Z_0 = \delta_x)\nu(dx)}{\int \E(N_t\big| Z_0 = \delta_x)\nu(dx)},
\end{align}
and $Y^{(t),\pi_t}$ denotes the auxiliary process with initial condition distributed as $\pi_t$.
This result shows that the auxiliary process is the appropriate tool for the study of the trait along the ancestral lineage of a sampling. We notice in particular that the dependence of the average number of individuals in the population on the trait plays a crucial part in the creation of a bias.

Finally, we refer the reader to \cite{marguet2017law} for results on the asymptotic behavior of the process of a sampling. In particular, under some assumptions ensuring the ergodicity of the auxiliary process, a law of large numbers for the empirical distribution of ancestral trajectories is proven. The asymptotic behavior of the process of a sampling has already been studied in \cite{bansaye2011limit} in the case of a constant division rate and in \cite{cloez} in a spectral framework.

\paragraph*{Outline.}
Section \ref{existence} is devoted to the rigorous construction of our process. In Section \ref{sub:existence}, we first describe in detail the model and in Theorem \ref{th:existence}, we prove the existence and uniqueness of the branching process. Then, in Section \ref{sec:exdebut}, we introduce our three examples of cell division models: the size-structured model with linear or exponential growth and the parasite infection model. In Section \ref{sampling}, we detail the properties of the Markov process along the spine. In particular, in Theorem \ref{th:mto}, we prove the Many-to-One formula which describes the dynamic of a typical individual in the population. Finally, we give two other Many-to-One formulas, one for the dynamic of the whole tree in Proposition \ref{prop:mtotree} and an other one for the dynamic of a couple of traits in Proposition \ref{prop:mtoforksgene}. Section \ref{infiniteparticle} concerns the ancestral lineage of a uniform sampling at a fixed time in a large population. More precisely, in Theorem \ref{th:grdpop}, we prove the convergence \eqref{eq:conv_intro}. In Section \ref{sec:exfin}, we give explicitly the dynamic of the auxiliary process for our three examples of cell population models. Finally, in Section \ref{sec:other}, we give some useful comments on the model and some additional examples. 

\paragraph*{Notation.}We use the classical Ulam-Harris-Neveu notation to identify each individual. Let
\[\mathcal{U}=\bigcup_{n\in\mathbb{N}}\left(\mathbb{N}^{*}\right)^{n}.
\]
The first individual is labeled by $\emptyset $. When an individual $u\in\mathcal{U}$ dies, its $K$ descendants are labeled $u1,\ldots  ,uK$. If $u$ is an ancestor of $v$, we write $u\leq v$. 

We will denote by $\mathcal{C}^1(\mathcal{X})$ and $\mathcal{C}^2(\mathcal{X})$, the set of continuously differentiable and twice continuously differentiable functions on $\mathcal{X}$, respectively. Finally, for any stochastic process $X$ on $\mathcal{X}$ or $Z$ on the set of point measures on $\mathcal{X}$, we will denote by $\mathbb{E}_x\left[f(X_t)\right]=\E\left[f(X_t)\big|X_0 = x\right]$ and $\mathbb{E}_{\delta_x}\left[f(Z_t)\right]=\E\left[f(Z_t)\big|Z_0 = \delta_x\right]$.

\section{Definition and existence of the structured branching process\label{existence}}\label{sec:exis}
First, we introduce some useful notations and objects to characterize the branching process. Then, we prove the existence and uniqueness of the measure-valued branching process from scratch in Section \ref{sub:existence}. Henceforth, we work on a probability space denoted by $\left(\Omega,\mathcal{F},\mathbb{P}\right)$. 

\paragraph*{Dynamic of the trait.} Let $\mathcal{X}=\mathcal{Y}\times \mathbb{R}_+$ where $\mathcal{Y}\subset\mathbb{R}^d$ is a measurable space for some $d\geq 1$. It is the state space of the Markov process describing the trait of the individuals. The second component, with values in $\mathbb{R}_+$, is a time component. We assume that $\left(A_t,t\geq 0\right)$ is a strongly continuous contraction semi-group with associated infinitesimal generator $\mathcal{G}:\mathcal{D}(\mathcal{G})\subset \mathcal{C}_b(\mathcal{X})\rightarrow \mathcal{C}_b(\mathcal{X})$, where $\mathcal{C}_b(\mathcal{X})$ denotes the space of continuous bounded function from $\mathcal{X}$ to $\mathbb{R}$.

Then, according to Theorem 4.4.1 in \cite{ethier2009markov}, there is a unique solution to the martingale problem associated with $(\mathcal{G},\mathcal{D}(\mathcal{G}))$, denoted by $(X_t,t\geq 0)$. It is an $\mathcal{X}$-valued c\`adl\`ag strong Markov process. For all $0\leq s\leq t$, $x\in\mathcal{X}$,  we denote by $\Phi(x,s,t)$ the corresponding stochastic flow i.e. $(\Phi(x,s,t),t\geq s)$ is the unique solution of the martingale problem associated with $\left(\mathcal{G},\mathcal{D}(\mathcal{G})\right)$ satisfying $\Phi(x,s,s)=x$. We have the following properties:
 \begin{itemize}
 \item for all $f\in\mathcal{D}(\mathcal{G})$, $0\leq s\leq t$ and $x\in\mathcal{X}$,
 \begin{align}\label{eq:mart_prob}
 f\left(\Phi(x,s,t)\right)-f(x)-\int_{s}^t \mathcal{G}f\left(\Phi(x,s,r)\right)dr,
 \end{align}
 is a $\sigma(X_t,t\geq 0)$-martingale where $\sigma(X_t,t\geq 0)$ is the natural filtration associated with $X$.
 \item for each $0\leq s\leq t$, $\Phi(\cdot,s,t)$ is a measurable map from $\mathcal{X}$ to $\mathcal{X}$,
 \item for each $0\leq r\leq s\leq t$ and all $x\in\mathcal{X}$, $\Phi\left(\Phi(x,r,s),s,t\right)=\Phi(x,r,t),$ almost surely.
 \end{itemize}
We refer the reader to \cite{kunita1997stochastic} for more properties on stochastic flows.
\begin{rem}\label{rem:densite} According to the Hille-Yoshida theorem (see \cite[Theorem 1.2.6]{ethier2009markov}), $\mathcal{D}(\mathcal{G})$ is dense in $\mathcal{C}_b(\mathcal{X})$ for the topology of uniform convergence.
\end{rem}
\paragraph*{Division events.} An individual with trait $x$ dies at an instantaneous rate $B(x)$, where $B$ is a continuous function from $\mathcal{X}$ to $\mathbb{R}_+$. It is replaced by $A_{u}(x)$ children, where $A_{u}(x)$ is a $\mathbb{N}$-valued random variable  with distribution $\left(p_{k}\left(x\right),k\geq 0\right)$. For convenience, we assume that $p_1(x)\equiv 0$ for all $x\in\mathcal{X}$. The trait at birth of the $j$th descendant among $k$ is given by the random variable $F_j^{(k)}(x,\theta)$, where $\left(F_j^{(k)}(\cdot,\cdot),j\leq k,k\in\mathbb{N}\right)$ is a family of measurable functions from $\mathcal{X}\times [0,1]$ to $\mathcal{X}$ and $\theta$ is a uniform random variable on $[0,1]$. This formalism will prove useful for the use of Poisson point measures. For all $k\in\mathbb{N}$, let $P^{(k)}(x,\cdot)$  be the probability measure on $\mathcal{X}^k$ corresponding to the trait distribution at birth of the $k$ descendants of an individual with trait $x$. We denote by $P_{j}^{(k)}\left(x,\cdot\right)$ the $j$th marginal distribution of $P^{(k)}$ for all $k\in\mathbb{N}$ and $j\leq k$ i.e. for all Borel sets $A\subset\mathcal{X}$, we have $P_{j}^{(k)}\left(x,A\right)=P^{(k)}\left(x,\mathcal{X}^{j-1}\times A\times\mathcal{X}^{k-j}\right)$.

We denote by $\mathcal{M}_{P}(\mathcal{X})$ the set of point measures on $\mathcal{X}$. Following Fournier and M\'el\'eard \cite{fournier2004microscopic}, we work in $\mathbb{D}\left(\mathbb{R}_{+},\mathcal{M}_{P}\left(\mathcal{X}\right)\right)$, the state of c\`adl\`ag measure-valued processes. For any $\bar{Z}\in \mathbb{D}\left(\mathbb{R}_{+},\mathcal{M}_{P}\left(\mathcal{U}\times\mathcal{X}\right)\right)$, we write $\bar{Z}_t=\sum_{u\in V_t}\delta_{\left(u,X_t^u\right)}(du,dx)$ and
\[
Z_t=\sum_{u\in V_t} \delta_{X_t^u},\ t\geq 0,
\]
the marginal measure of $\bar{Z}_t(du,dx)$ on $\mathcal{X},$ where $V_t$ represents the set of individuals alive at time $t$. We set $N_t=\# V_t$. Moreover, for any process $\bar{Z}\in\mathbb{D}\left(\mathbb{R}_{+},\mathcal{M}_{P}\left(\mathcal{U}\times \mathcal{X}\right)\right)$, we define recursively the associated sequence of jump times by \[T_0(\bar{Z})=0\text{ and }T_{k+1}(\bar{Z})=\inf\left\lbrace t>T_k(\bar{Z}),\ N_t\neq N_{T_k(\bar{Z})}\right\rbrace,\]
with the standard convention that $\inf\left\lbrace\emptyset\right\rbrace=+\infty$. 

In order to ensure the non-explosion in finite time of such a process, we need to consider two sets of hypotheses. The first one controls what happens regarding divisions (in term of rate of division and	of mass creation).
\begin{assumptionA}\label{assu:debut1}
We consider the following assumptions:
\begin{enumerate}
\item There exist $b_1,b_2\geq 0$ and $\gamma\geq 1$ such that for all $x\in\mathcal{X}$,
\[B(x)\leq b_1\left| x\right|^{\gamma}+b_2.\]
\item  For all $t\geq 0$, there exists $\ell(t)\in\mathbb{R}_+$, increasing in $t$, such that for all $x=(y,t)\in\mathcal{X}$, $k\in\mathbb{N}$ and $\theta\in[0,1]$,
\[
\sum_{i=1}^{k} F_i^{(k)}(x,\theta)\leq x\vee \ell(t),\text{ componentwise}.
\]
\item
There exists $\overline{m}\geq 0$ such that for all $x\in \mathcal{X}$, 
\[ m(x)=\sum_k kp_k(x)\leq \overline{m}.\]

\item For all $x\in\mathcal{X}$ and $s\geq 0$, we have
\begin{align*}
\lim_{t\rightarrow+\infty}\int_{s}^{t}B\left(\Phi\left(x,s,r\right)\right)dr=+\infty,\ \text{almost surely}.
\end{align*}
\end{enumerate}
\end{assumptionA}
The first point controls the lifetimes of individuals via the division rate. In particular, if $\gamma=0$, $B$ is bounded and the non-explosion in finite time of the number of individuals in the previously defined process is obvious. In more general framework, we have to consider the other points of Assumption \ref{assu:debut1} in order to prove the non-explosion in finite time. The second point of Assumption \ref{assu:debut1} means that we consider a fragmentation process with a possibility of mass creation at division when the mass is small enough. In particular, clones are allowed in the case of bounded traits and bounded number of descendants and any finite type branching structured process can be considered. The dependence in $t$ of the threshold $\ell$ allows us to consider models in a varying environment. The last point of Assumption \ref{assu:debut1} ensures that each individual divides after a certain time.

We make a second assumption to control the behavior of traits between divisions.
\begin{assumptionA}\label{assu:debut2}
There exists a sequence of functions $(h_{n,\gamma})_{n\in\mathbb{N}}$ such that for all $n\in\mathbb{N},\ h_{n,\gamma}\in \mathcal{D}(\mathcal{G})$ and $\lim_{n\rightarrow+\infty} h_{n,\gamma}(x)=|x|^{\gamma}$ for all $x\in\mathcal{X}$ and there exist $c_1,c_2\geq 0$ such that, for all $x\in\mathcal{X}$,
\begin{align*}
\lim_{n\rightarrow+\infty}\mathcal{G}h_{n,\gamma}(x)\leq c_1|x|^{\gamma}+c_2,
\end{align*}
where $\gamma$ is defined in the first item and for $x\in\mathcal{X}, |x|^{\gamma}=\left(\sum_{i=1}^{d+1} |x_i|\right)^{\gamma}$.
\end{assumptionA}

Assumptions \ref{assu:debut1}(1) and \ref{assu:debut2} are linked via the parameter $\gamma$ which controls the balance between the growth of the population and the dynamic of the trait. The sequence of functions $(h_{n,\gamma}, n\in\mathbb{N})$ allows us to consider dynamics for the trait for which the domain of the generator does not contain the function $x\mapsto |x|^{\gamma}$.  

\subsection{Existence and uniqueness of the structured branching process\label{sub:existence}}
We now prove the strong existence and uniqueness of the structured branching process. Let $E=\mathcal{U}\times\mathbb{R}_{+}\times\left[0,1\right]\times\left[0,1\right]$ and $M\left(ds,du,dz,dl,d\theta\right)$ be a Poisson point measure on $\mathbb{R}_{+}\times E$ with intensity $ds\otimes n(du)\otimes dz\otimes dl\otimes d\theta$, where $n(du)$ denotes the counting measure on $\mathcal{U}$. Let $\left(\Phi^u\right)_{u\in\mathcal{U}}$ be a family of independent stochastic flows satisfying \eqref{eq:mart_prob} describing the individual-based dynamics. We assume that $M$ and $\left(\Phi^u\right)_{u\in\mathcal{U}}$ are independent. We denote by $\mathcal{F}_t$ the filtration generated by the Poisson point measure $M$ and the family of stochastic flows $(\Phi^u(x,s, t), u\in\mathcal{U},x\in\mathcal{X},s\leq t)$ up to time $t$. 

For all $x\in\mathcal{X}$, there exists a function $G(x,\cdot):[0,1]\rightarrow \mathbb{N}$ such that
\[G(x,l)\overset{d}{=}\left(p_k(x),\ k\in\mathbb{N}\right),\]
where $l$ is a uniform random variable on $[0,1]$. This formalism will prove useful in the use of Poisson point measure to describe the jumps in the measure-valued branching process. For convenience, for all $x\in\mathcal{X}$ and $\theta,l$ uniform random variables on $[0,1]$, we write
\begin{align*}\ F_i(x,l,\theta)=F_i^{(G(x,l))}(x,\theta).
\end{align*}
We also define $$\bar{\mathcal{D}}(\mathcal{G}):=\left\lbrace f:\mathcal{U}\times \mathbb{R}_+\times\mathcal{X}\rightarrow\mathbb{R} \text{ such that }f(u,s,\cdot)\in\mathcal{D}(\mathcal{G}),\  \forall u\in\mathcal{U},\ s\geq 0\right\rbrace.$$
For all $0\leq s \leq t$, $f\in\bar{\mathcal{D}}(\mathcal{G})$, $x\in\mathcal{X}$ and $u\in\mathcal{U}$, we consider the $\mathcal{F}_t$-martingale $\left(M_{s,t}^{f,u}(x),t\geq s\right)$ defined by
\begin{align}\nonumber
M_{s,t}^{f,u}(x):= & f(u,t,\Phi^u(x,s,t))-f(u,s,x)\\\label{eq:probmart}
&-\int_{s}^t\left(\mathcal{G}f(u,r,\Phi^u(x,s,r))+\partial_rf(u,r,\Phi^u(x,s,r))\right)dr.
\end{align}

\begin{thm}\label{th:existence}
Under Assumptions \ref{assu:debut1}(1-3) and \ref{assu:debut2}, there exists a strongly unique $\mathcal{F}_t$-adapted c\`adl\`ag process $(\overline{Z}_s, s\geq 0)$ taking values in $\mathcal{M}_P(\mathcal{U}\times\mathcal{X})$ such that, for all $f\in \bar{\mathcal{D}}(\mathcal{G})$ and $t\geq 0$,
\begin{multline}
\langle \bar{Z}_{t},f\rangle = f\left(\emptyset,0,x_{0}\right)+\int_{0}^{t}\int_{\mathcal{U}\times\mathcal{X}}\left(\mathcal{G}f(u,s,x)+\partial_s f\left(u,s,x\right)\right)\bar{Z}_{s}\left(du,dx\right)ds+M_{0,t}^f(x)\\
 +\int_{0}^{t}\int_{E}\mathbf{1}_{\left\{u\in V_{s^{-}},\ z\leq B\left(X_{s^{-}}^u\right)\right\}}
 \left(\sum_{i=1}^{G\left(X_s^u,l\right)}f\left(u,s,F_{i}\left(X_s^u,l,\theta\right)\right)-f\left(u,s,X_{s^{-}}^{u}\right)\right)\\
 \times M\left(ds,du,dz,dl,d\theta\right),\label{eq:evol}
\end{multline}
where, for all $s\geq 0$ and $t\geq s$,
\begin{align*}
M_{s,t}^f(x)=\sum_{k\geq 1}\mathbf{1}_{s\leq T_{k-1}(\bar{Z})<t}\sum_{u\in V_{T_{k-1}(\bar{Z})}} M_{T_{k-1}(\bar{Z}),T_k(\bar{Z})\wedge t}^{f,u}\left(X^u_{T_{k-1}(\bar{Z})}\right),
\end{align*}
is a $\mathcal{F}_t$-martingale.
\end{thm}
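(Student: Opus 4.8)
The plan is to build the process by induction on the sequence of jump times $(T_k(\bar Z))_{k\ge 0}$ and then show this construction does not explode in finite time, which is where Assumptions \ref{assu:debut1}(1--3) and \ref{assu:debut2} enter. First I would construct the process on the stochastic interval $[T_k,T_{k+1})$: given the finite configuration $\bar Z_{T_k}=\sum_{u\in V_{T_k}}\delta_{(u,X^u_{T_k})}$ at the $k$-th jump, each living individual $u$ evolves according to its own stochastic flow $\Phi^u$ started from $X^u_{T_k}$ at time $T_k$; since there are finitely many individuals and each has division rate bounded on the relevant (compact, by the growth bounds) trait range, the superposition of the individual clocks --- read off from the atoms of $M$ falling in the region $\{u\in V_{s^-},\ z\le B(X^u_{s^-})\}$ --- has a well-defined first point, which defines $T_{k+1}$. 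At that atom, the label $l$ selects the offspring number via $G(X^u_{T_{k+1}},l)$, and $\theta$ together with the maps $F_i$ assigns the birth traits; this produces $\bar Z_{T_{k+1}}$ and the induction continues. By construction each piece solves \eqref{eq:mart_prob} between jumps and has the prescribed jumps, so the pathwise uniqueness of the flows (Theorem 4.4.1 in \cite{ethier2009markov}) and the fact that the driving noise $M$ and $(\Phi^u)_{u\in\mathcal U}$ are fixed give strong uniqueness of this recursive construction: any two solutions agree on $[0,T_1)$, hence at $T_1$, hence on $[T_1,T_2)$, and so on.

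The main obstacle is non-explosion: a priori $T_\infty:=\lim_k T_k$ could be finite. To rule this out I would control $N_t$ on $[0,T_\infty)$ via a Lyapunov argument. The natural quantity is $W_t:=\sum_{u\in V_t}(1+|X^u_t|^\gamma)$, or rather its bounded approximations $W_t^n:=\sum_{u\in V_t}(1+h_{n,\gamma}(X^u_t))$ so that we may legitimately plug $f(u,s,x)=1+h_{n,\gamma}(x)$ into \eqref{eq:evol}. Taking expectations and using Assumption \ref{assu:debut2} to bound $\mathcal G h_{n,\gamma}(x)\le c_1|x|^\gamma+c_2$, Assumption \ref{assu:debut1}(1) to bound $B(x)\le b_1|x|^\gamma+b_2$, Assumption \ref{assu:debut1}(3) to bound the offspring number $m(x)\le\overline m$, and Assumption \ref{assu:debut1}(2) to bound the total mass created at a division by $x\vee\ell(t)$ (so the $|\cdot|^\gamma$-mass of the children does not exceed that of the mother plus a controlled term), the jump term in \eqref{eq:evol} contributes something of the form $B(X^u_{s^-})\cdot(\text{const}\cdot(1+|X^u_{s^-}|^\gamma)\,\overline m)$ per individual; since $B(x)(1+|x|^\gamma)\lesssim 1+|x|^{2\gamma}$ this is not immediately closed in $W$, so I expect one has to be slightly more careful --- either iterate the estimate (controlling $\E\sum_u |X^u_t|^{2\gamma}$ by a further Lyapunov function, which needs the analogous bound for $h_{n,2\gamma}$, available from the hypotheses with $\gamma$ replaced throughout) or, more cleanly, work on the event $\{T_k\le t\}$ and run a Gronwall estimate to get $\E[W^n_{t\wedge T_k}]\le C(t)$ uniformly in $n$ and $k$, then let $n\to\infty$ by Fatou. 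In any case the output is a bound $\E[W_{t\wedge T_k}]\le C(t)<\infty$ independent of $k$.

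Finally I would deduce $T_\infty=+\infty$ a.s. from this moment bound: on $\{T_\infty\le t_0\}$ the number of jumps is infinite by time $t_0$, and since each individual divides at rate $B(X^u_s)=b_1|X^u_s|^\gamma+b_2\le$ something controlled by $W_s$, an infinite number of divisions in finite time forces $W_s$ to blow up before $t_0$ on a positive-probability event, contradicting $\sup_{k}\E[W_{t_0\wedge T_k}]<\infty$. More precisely, $N_{t\wedge T_k}\le W_{t\wedge T_k}$ gives $\sup_k\E[N_{t\wedge T_k}]<\infty$, hence $N_{t\wedge T_k}$ is a.s. bounded in $k$ for each $t$, so only finitely many jumps occur in $[0,t]$, i.e. $T_k\to\infty$. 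Once $T_\infty=\infty$ a.s., the recursively constructed process is defined for all $t\ge0$, is $\mathcal F_t$-adapted and càdlàg with values in $\mathcal M_P(\mathcal U\times\mathcal X)$, and satisfies \eqref{eq:evol}; the martingale property of $M^f_{s,t}(x)$ follows from that of each $M^{f,u}_{s,t}(x)$ in \eqref{eq:probmart} together with the optional stopping at the (now a.s.\ finite, increasing to infinity) jump times and the integrability just established. Uniqueness among all $\mathcal M_P(\mathcal U\times\mathcal X)$-valued solutions of \eqref{eq:evol} follows since any such solution must have the same jump times and jumps as the recursive construction (both are driven by the same $M$ and $(\Phi^u)$), so they coincide on each $[T_k,T_{k+1})$ by flow uniqueness, hence everywhere.
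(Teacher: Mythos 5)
Your overall architecture --- recursive construction between jump times, strong uniqueness because both candidate solutions are driven by the same Poisson measure and the same family of flows, and non-explosion via a moment bound on $N_t+\sum_{u\in V_t}|X_t^u|^\gamma$ --- is exactly the paper's (split there into Lemmas \ref{lemma:existence}, \ref{lemma:martingale}, \ref{lemma:uniqueness} and \ref{lemma:nonexpl}). However, there is a genuine gap at the one step where the proof is actually delicate. You correctly observe that a naive bound on the jump term contributes $B(x)\cdot\mathrm{const}\cdot(1+|x|^\gamma)\overline m\sim|x|^{2\gamma}$ per individual, which does not close the Gr\"onwall loop, but you then leave the resolution open, and neither of your fallbacks works. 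The point of Assumption \ref{assu:debut1}(2) is not merely that the children's mass is ``the mother's plus a controlled term'': since $\sum_{i}F_i^{(k)}(x,\theta)\le x\vee\ell(t)$ componentwise and $y\mapsto y^{\gamma}$ is superadditive for $\gamma\ge1$, the increment of the $\gamma$-moment at a division satisfies
\[
\sum_{j=1}^{k}\bigl|F_j^{(k)}(x,\theta)\bigr|^{\gamma}-|x|^{\gamma}\;\le\;\ell(s)\,\mathbf{1}_{\{|x|^{\gamma}\le\ell(s)\}},
\]
i.e.\ it is nonpositive for large traits and bounded by $\ell(s)$ only on a set where the trait itself is bounded. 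Multiplying by $B(x)\le b_1|x|^{\gamma}+b_2$ therefore yields the constant $(b_1\ell(s)+b_2)\ell(s)$ per individual, not $|x|^{2\gamma}$, and the estimate closes linearly in $N_t+\sum_u|X_t^u|^{\gamma}$; this is precisely how Lemma \ref{lemma:nonexpl} concludes.

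Your first fallback (iterating with $h_{n,2\gamma}$) fails because Assumption \ref{assu:debut2} only provides the approximating sequence for the exponent $\gamma$ of Assumption \ref{assu:debut1}(1), and in any case the same mismatch reappears at every order ($B(x)|x|^{2\gamma}\sim|x|^{3\gamma}$), so the hierarchy never closes. Your second fallback (localizing at $T_k$ before running Gr\"onwall) is already implicit in the paper's proof and does nothing to remove the superlinear drift. Without the fragmentation inequality displayed above, the non-explosion step --- which is the entire reason Assumptions \ref{assu:debut1}(1--3) and \ref{assu:debut2} appear in the statement --- is not established.
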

The existence and uniqueness of such measure-valued process has first been studied by Fournier and M\'el\'eard \cite{fournier2004microscopic}. We also refer to \cite{tran,bansaye2015stochastic} for different extensions and to \cite{BT} for the case of branching processes. Here, we obtain the non-explosion of the branching process in finite time under quite general assumptions (no bounded branching rate, random number of offspring, random transmission of the trait). 

The proof of this theorem is split into four lemmas. First, in Lemma \ref{lemma:existence}, we prove the existence of a $\mathcal{F}_t$-adapted c\`adl\`ag measure-valued process $\bar{Z}$ solution of \eqref{eq:evol} for all $t\in [0,T_k(\bar{Z}))$ and all $k\in\mathbb{N}$. Then, in Lemma \ref{lemma:martingale}, we prove that $\left(M_{s,t}^f,t\geq 0\right)$ is a $\mathcal{F}_t$-martingale. Next, in Lemma \ref{lemma:uniqueness}, we prove the uniqueness of the increasing sequence $(T_k(\bar{Z}),\ k\geq 0)$ corresponding to the jump times of a solution $\bar{Z}$ to \eqref{eq:evol} and the uniqueness of a $\mathcal{F}_t$-adapted c\`adl\`ag solution to \eqref{eq:evol} for $t\in[0,T_k(\bar{Z}))$ for all $k\in\mathbb{N}$. Finally, in Lemma \ref{lemma:nonexpl}, we prove that the sequence of jump times tends to infinity resulting in the existence and uniqueness of the process on $\mathbb{R}_+$.

\begin{lemma}\label{lemma:existence}
There exists a $\mathcal{F}_t$-adapted c\`adl\`ag measure-valued process $(\bar{Z}_t,\ t\geq 0)\in\mathcal{M}_P(\mathcal{U}\times \mathcal{X})$ which is solution of \eqref{eq:evol} for all $f\in\bar{\mathcal{D}}(\mathcal{G})$ and for all $t\in[0,T_k(\bar{Z}))$, $k\in\mathbb{N}$.
\end{lemma}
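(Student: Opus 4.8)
The plan is to construct the process iteratively on the successive jump intervals $[T_k(\bar Z), T_{k+1}(\bar Z))$ and to argue that each step is well-defined, producing a c\`adl\`ag $\mathcal{F}_t$-adapted measure-valued process that solves \eqref{eq:evol} up to (but not including) each $T_k(\bar Z)$. First I would initialize: on $[0,T_1(\bar Z))$ there is a single individual $\emptyset$ whose trait evolves as $\Phi^\emptyset(x_0,0,\cdot)$, driven by the stochastic flow, and this gives a solution of \eqref{eq:evol} on that interval since the integral against $M$ vanishes there by definition of $T_1(\bar Z)$ (no division has occurred), while the martingale-problem identity \eqref{eq:mart_prob}/\eqref{eq:probmart} handles the drift and martingale terms. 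The first jump time $T_1(\bar Z)$ is read off from the Poisson point measure $M$ as the first atom $(s,\emptyset,z,l,\theta)$ with $z\leq B(X_{s^-}^\emptyset)$; it is a.s. positive and finite-or-infinite, and one checks measurability with respect to $\mathcal{F}_s$ using continuity of $B$ and of $s\mapsto X_s^\emptyset$.

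Then I would set up the induction: assume $\bar Z$ has been constructed on $[0,T_k(\bar Z))$, together with $V_{T_k(\bar Z)^-}$, the traits $(X^u_{T_k(\bar Z)^-})_{u\in V_{T_k(\bar Z)^-}}$, and the filtration. At the atom of $M$ realizing $T_k(\bar Z)=:t_k$, say $(t_k,v,z,l,\theta)$ with $v\in V_{t_k^-}$ and $z\leq B(X_{t_k^-}^v)$, the dividing individual $v$ is replaced by $G(X_{t_k^-}^v,l)$ children labelled $v1,\dots,vG(\cdot)$ with traits $F_i(X_{t_k^-}^v,l,\theta)$; this defines $\bar Z_{t_k}$ and hence $V_{t_k}$. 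On $[t_k,T_{k+1}(\bar Z))$ each individual $u\in V_{t_k}$ then moves independently according to its own flow $\Phi^u(X^u_{t_k},t_k,\cdot)$, and $T_{k+1}(\bar Z)$ is again the first subsequent atom of $M$ whose mark satisfies $z\leq B(X_{s^-}^u)$ for some $u\in V_{s^-}$. One must check: (i) this object is c\`adl\`ag and $\mathcal{F}_t$-adapted, which follows from the c\`adl\`ag property of the flows, the right-continuity of $\mathcal{F}_t$, and the fact that on this interval $N_t$ is constant so the sum over $V_t$ is finite and fixed; and (ii) it satisfies \eqref{eq:evol} on $[0,T_{k+1}(\bar Z))$, which one verifies by splitting the integral against $M$ over the finitely many atoms with $s<T_{k+1}(\bar Z)$, reconstructing the jump contributions exactly, and using \eqref{eq:probmart} on each sub-interval to reassemble the drift plus the martingale $M^f_{0,t}$ as in its stated definition. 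The key structural point that keeps this consistent is Assumption \ref{assu:debut1}(3): $m(x)\leq\overline m$ and the absence of a clause forbidding $p_0$ means $G$ is a.s. finite, so each division produces finitely many children and $N_t<\infty$ for all $t<T_k(\bar Z)$, $k\in\mathbb{N}$; Assumptions \ref{assu:debut1}(1)--(2) and \ref{assu:debut2} are not needed here, being reserved for the non-explosion Lemma \ref{lemma:nonexpl}.

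The main obstacle I anticipate is not existence on each finite interval — that is a routine interlacing-of-a-Poisson-measure-with-a-flow construction — but rather bookkeeping the adaptedness and the exact algebraic matching with \eqref{eq:evol}: one has to be careful that $T_{k+1}(\bar Z)$ is genuinely a stopping time (it is the first atom of $M$ in a predictably-defined region, so this is standard but needs the continuity of $B$ and the fact that only finitely many individuals are present), and that the telescoping of the per-interval martingale-problem identities \eqref{eq:probmart} reproduces precisely the global $M^f_{s,t}$ appearing in the statement, including the indicator $\mathbf 1_{s\leq T_{k-1}(\bar Z)<t}$ and the inner sum over $V_{T_{k-1}(\bar Z)}$. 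A secondary subtlety is that this construction only yields a solution on $\bigcup_k[0,T_k(\bar Z))=[0,T_\infty(\bar Z))$ where $T_\infty(\bar Z)=\lim_k T_k(\bar Z)$; extending past $T_\infty(\bar Z)$ (i.e. ruling out explosion) is deliberately deferred to Lemma \ref{lemma:nonexpl}, so here I would only claim the solution up to each $T_k(\bar Z)$, matching the statement verbatim.
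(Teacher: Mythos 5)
Your proposal is correct and follows essentially the same route as the paper's proof: a recursive interlacing construction (flows between the atoms of $M$, replacement of the dividing individual by its children at each atom), followed by an induction on $k$ in which the sum over $V_t$ is split into the pre-existing individuals and the newborns, the jump integral is matched atom by atom, and the per-interval martingale-problem identities \eqref{eq:probmart} are telescoped into the global martingale $M^f_{0,t}$. You also correctly identify that only the finiteness of the offspring number is needed at this stage and that non-explosion is deferred to Lemma \ref{lemma:nonexpl}, exactly as in the paper.
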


\begin{proof} See Section \ref{app:existence} in the appendix.
\end{proof}
The existence of such processes has already been studied in \cite{BT} in the case of a trait following a Feller diffusion. From Lemma \ref{lemma:existence}, we deduce the existence of a c\`adl\`ag measure-valued process $Z\in\mathcal{M}_P(\mathcal{X})$ solution of \eqref{eq:evol} which is given by the projection of the solution $\bar{Z}\in\mathcal{M}_P(\mathcal{U}\times \mathcal{X})$ on the second coordinate.

\begin{lemma}\label{lemma:martingale}
Let $\bar{Z}\in\mathcal{M}_P(\mathcal{U}\times\mathcal{X})$ be a solution of \eqref{eq:evol} whose construction is given in the previous lemma. Let $k\in\mathbb{N}$. For all $0\leq s\leq t\leq T_k(\bar{Z})$, $x\in\mathcal{X}$ and $f\in \bar{\mathcal{D}}(\mathcal{G})$,
\[M_{s,t}^f(x)=\sum_{k\geq 1}\mathbf{1}_{\left\{s\leq T_{k-1}(\bar{Z})<t\right\}}\sum_{u\in V_{T_{k-1}}} M_{T_{k-1}(\bar{Z}),T_k(\bar{Z})\wedge t}^{f,u}\left(X^u_{T_{k-1}(\bar{Z})}\right),\]
is an $\mathcal{F}_t$-martingale.
\end{lemma}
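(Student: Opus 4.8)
The plan is to prove the martingale property by working piece by piece on the (deterministic-length) excursions between consecutive jump times, and then glue them together using the tower property and the optional stopping of the building blocks $M^{f,u}_{s,t}(x)$. First I would fix $k$ and work on $[0,T_k(\bar Z)]$, and observe that on the event $\{T_{k-1}(\bar Z)<t\}$ the set $V_{T_{k-1}(\bar Z)}$ of individuals alive just after the $(k-1)$-th jump is $\mathcal F_{T_{k-1}(\bar Z)}$-measurable and finite (indeed $N_{T_{k-1}}\le N_0+\sum_{j\le k-1}(A_j-1)$ is integrable, with $A_j$ bounded in mean by $\overline m$ from Assumption \ref{assu:debut1}(3)); likewise each $X^u_{T_{k-1}(\bar Z)}$ is $\mathcal F_{T_{k-1}(\bar Z)}$-measurable. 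The point is that between $T_{k-1}(\bar Z)$ and $T_k(\bar Z)$ no individual dies, so each surviving individual simply follows its own stochastic flow $\Phi^u$, driven by data independent of what happened up to $T_{k-1}$; this is exactly the regime in which \eqref{eq:probmart} tells us that $t\mapsto M^{f,u}_{s,t}(x)$ is an $\mathcal F_t$-martingale.

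The key technical step is the following strong-Markov / optional-stopping argument. For a single individual $u$ alive at time $T_{k-1}(\bar Z)=:S$ with trait $x=X^u_S$, the process $r\mapsto M^{f,u}_{S,r}(X^u_S)$ is, conditionally on $\mathcal F_S$, a martingale started from $0$ by the martingale problem characterisation \eqref{eq:mart_prob}–\eqref{eq:probmart} applied at the (random but $\mathcal F_S$-measurable) starting time $S$. The time $T_k(\bar Z)\wedge t$ is a stopping time bounded by $t$, so by optional stopping the stopped process $M^{f,u}_{S,T_k(\bar Z)\wedge r}(X^u_S)$ remains a martingale; integrability is clear because $f\in\bar{\mathcal D}(\mathcal G)$ forces $f$ and $\mathcal Gf$ to be bounded (the domain lands in $\mathcal C_b(\mathcal X)$, cf. the setup of $\mathcal G$) and the time integral is over an interval of length at most $t$, plus $\partial_r f$ is controlled on the relevant compact time range. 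Summing the finitely many such martingales over $u\in V_{S}$ and then over the finitely many indices $k$ with $S=T_{k-1}(\bar Z)\in[s,t)$ — there are finitely many since the $T_k$ are strictly increasing and we are below $T_k(\bar Z)$ — and using that on $[0,T_k(\bar Z)]$ only the first $k$ terms contribute, we get that $M^f_{s,t}(x)$ is a finite sum of $\mathcal F_t$-martingales (after the appropriate conditioning), hence an $\mathcal F_t$-martingale; the increment $M^f_{s,t}-M^f_{s,s}$ telescopes correctly by construction of the $M^{f,u}$.

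To make the conditioning rigorous I would proceed by induction on $k$: assume $(M^f_{0,t\wedge T_{k-1}(\bar Z)})_t$ is a martingale, then for $t\in[T_{k-1}(\bar Z),T_k(\bar Z)]$ write $M^f_{0,t}=M^f_{0,T_{k-1}(\bar Z)}+\sum_{u\in V_{T_{k-1}}}M^{f,u}_{T_{k-1}(\bar Z),t}(X^u_{T_{k-1}})$ and take $\mathbb E[\cdot\mid\mathcal F_\sigma]$ for $\sigma\le t$, splitting on whether $\sigma$ falls before or after $T_{k-1}(\bar Z)$ and using the strong Markov property of the family of flows together with the fact that the increment of $\bar Z$ on $[T_{k-1},t]$ depends only on the flows and the Poisson points after $T_{k-1}$. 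The base case $k=1$ is the martingale property of $M^{f,\emptyset}_{0,t}(x_0)$ up to the first jump, i.e. \eqref{eq:probmart} itself. The main obstacle — and the place where care is genuinely needed — is the interchange of the (a priori infinite) sum over $k$ with the conditional expectation and the handling of the random number $N_{T_{k-1}}$ of summands: one must check a uniform integrability / dominated convergence condition, which is precisely where Assumption \ref{assu:debut1}(3) (the bound $m(x)\le\overline m$) and the boundedness of $f,\mathcal Gf$ enter, guaranteeing $\mathbb E\big[\sum_{u\in V_{T_{k-1}}}\sup_{r\le t}|M^{f,u}_{T_{k-1},r}|\big]<\infty$; restricting to $t\le T_k(\bar Z)$ as in the statement truncates the sum to its first $k$ terms and sidesteps the non-explosion issue, which is deferred to Lemma \ref{lemma:nonexpl}.
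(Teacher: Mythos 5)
Your proposal is correct and follows essentially the same route as the paper: the paper's proof is exactly the tower-property computation you describe, conditioning the increment on $\mathcal{F}_{T_{k-1}(\bar Z)}$ and using that each block $\bigl(M^{f,u}_{T_{k-1}(\bar Z),\,r}\bigr)_{r\geq T_{k-1}(\bar Z)}$ is a martingale (so its value stopped at $T_k(\bar Z)\wedge t$ has zero conditional expectation), with the restriction $t\leq T_k(\bar Z)$ truncating the sum to finitely many terms. The extra induction scaffold and the integrability remarks you add are harmless elaborations of what the paper leaves implicit.
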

\begin{proof}
Let $k\in\mathbb{N}$ and $0\leq s\leq t\leq T_k(\bar{Z})$. Let $f\in \bar{\mathcal{D}}(\mathcal{G})$ and $x\in\mathcal{X}$. Then, for all $s\leq r\leq t$, we have
\begin{align*}
&\mathbb{E}\Big[M_{s,t}^f(x)\big|\mathcal{F}_r\Big]-M_{s,r}^f(x)\\
=&\mathbb{E}\Big[\sum_{k\geq 1}\mathbf{1}_{\left\{r\leq T_{k-1}(\bar{Z})<t\right\}}\sum_{u\in V_{T_{k-1}(\bar{Z})}} M_{T_{k-1}(\bar{Z}),T_k(\bar{Z})\wedge t}^{f,u}\left(X^u_{T_{k-1}(\bar{Z})}\right)\Big|\mathcal{F}_r\Big]\\
=&\mathbb{E}\Big[\sum_{k\geq 1}\mathbf{1}_{\left\{r\leq T_{k-1}(\bar{Z})<t\right\}}\sum_{u\in V_{T_{k-1}(\bar{Z})}} \mathbb{E}\left[M_{T_{k-1}(\bar{Z}),T_k(\bar{Z})\wedge t}^{f,u}\left(X^u_{T_{k-1}(\bar{Z})}\right)\Big|\mathcal{F}_{T_{k-1}(\bar{Z})}\right]\Big|\mathcal{F}_r\Big]=0,
\end{align*}
because $\left(M_{s,t}^{f,u}(x),t\geq s\right)$ is a $\mathcal{F}_t$-martingale.
\end{proof}
Next, we prove the uniqueness of the sequence of jump times $(T_k(\bar{Z}),\ k\geq 0)$ associated with a solution $\bar{Z}\in\mathcal{M}_P(\mathcal{U}\times\mathcal{X})$ to \eqref{eq:evol} and the uniqueness of the solution on $[0,T_k(\bar{Z}))$, for all $k\in\mathbb{N}$. We refer to \cite{tran} for similar results.

\begin{lemma}\label{lemma:uniqueness}
The increasing sequence $(T_k(\bar{Z}),\ k\geq 0)$ corresponding to the jump times of a solution $\bar{Z}$ to \eqref{eq:evol} is strongly unique. Moreover, the strong uniqueness of a $\mathcal{F}_t$-adapted c\`adl\`ag measure-valued solution to \eqref{eq:evol} holds, for $t\in[0,T_k(\bar{Z}))$ and for all $k\in\mathbb{N}$.
\end{lemma}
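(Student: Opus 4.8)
The plan is to argue inductively on the jump index $k$, showing simultaneously that the $k$-th jump time $T_k(\bar{Z})$ and the restriction of $\bar{Z}$ to $[0,T_k(\bar{Z}))$ are determined pathwise by the driving data, namely the Poisson point measure $M$, the family of flows $(\Phi^u)_{u\in\mathcal{U}}$ and the initial condition $\delta_{(\emptyset,x_0)}$. The base case is trivial: $T_0(\bar{Z})=0$ and on $[0,T_1(\bar{Z}))$ there is a single individual $\emptyset$, whose trait must equal $\Phi^\emptyset(x_0,0,t)$ because the only contributions to \eqref{eq:evol} before the first jump are the $\mathcal{G}f+\partial_s f$ drift term and the martingale $M^f_{0,t}$, which together force $\langle\bar{Z}_t,f\rangle=f(\emptyset,t,\Phi^\emptyset(x_0,0,t))$ for every $f\in\bar{\mathcal{D}}(\mathcal{G})$; since $\mathcal{D}(\mathcal{G})$ is dense in $\mathcal{C}_b(\mathcal{X})$ (Remark \ref{rem:densite}), this pins down $\bar{Z}_t$ uniquely as a measure.

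For the inductive step, suppose $\bar{Z}$ is determined on $[0,T_k(\bar{Z}))$ and $T_k(\bar{Z})$ is determined. I would first identify $T_k(\bar{Z})$ itself: given the (now known) trajectory on $[0,T_k(\bar{Z}))$, the jump time is the first atom $(s,u,z,l,\theta)$ of $M$ with $u\in V_{s^-}$ and $z\le B(X^u_{s^-})$, which is a measurable functional of $M$ and of the trait trajectories $t\mapsto X^u_t$ on the previous interval — and those trajectories are themselves the known flows $\Phi^u$ started from the values assigned at $T_{k-1}$. This is where $p_1(x)\equiv 0$ matters, so that a selected atom genuinely changes $N_t$ and hence is a true jump time. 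At the jump, the configuration $\bar{Z}_{T_k(\bar{Z})}$ is obtained by removing the atom for the dying individual $u$ and inserting $G(X^u_{T_k^-},l)$ new atoms at traits $F_i(X^u_{T_k^-},l,\theta)$; this is forced by reading off \eqref{eq:evol} against a rich enough class of test functions $f\in\bar{\mathcal{D}}(\mathcal{G})$ separating points, again using density. Then on $[T_k(\bar{Z}),T_{k+1}(\bar{Z}))$ the process again solves \eqref{eq:evol} with no jump term active, so by the same argument as in the base case each alive individual $v$ follows its own flow $\Phi^v$ from its value at $T_k(\bar{Z})$, which determines $\bar{Z}$ on that interval and hence $T_{k+1}(\bar{Z})$. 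Uniqueness of the sequence $(T_k(\bar{Z}),k\ge 0)$ and of $\bar{Z}$ on each $[0,T_k(\bar{Z}))$ follows: if $\bar{Z}$ and $\bar{Z}'$ are two solutions, an easy induction shows $T_j(\bar{Z})=T_j(\bar{Z}')$ and $\bar{Z}\equiv\bar{Z}'$ on $[0,T_j(\bar{Z}))$ for all $j\le k$, and then the construction of the $(k+1)$-st jump from the common past gives $T_{k+1}(\bar{Z})=T_{k+1}(\bar{Z}')$.

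The one genuinely delicate point is passing from "equality of $\langle\bar{Z}_t,f\rangle$ for all $f\in\bar{\mathcal{D}}(\mathcal{G})$" to "equality of the measures $\bar{Z}_t$": $\bar{\mathcal{D}}(\mathcal{G})$ consists of functions whose $\mathcal{X}$-slices lie in $\mathcal{D}(\mathcal{G})$ but with arbitrary (measurable) dependence on the label $u\in\mathcal{U}$, so for a fixed finite configuration one can localize to a single label and then invoke Remark \ref{rem:densite} to approximate any $g\in\mathcal{C}_b(\mathcal{X})$ uniformly by elements of $\mathcal{D}(\mathcal{G})$; since the measures in play are finite point measures on $\mathcal{U}\times\mathcal{X}$, testing against such a separating family suffices. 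The remaining obstacle is purely bookkeeping: making sure that between consecutive jumps the martingale part $M^f_{s,t}$ of \eqref{eq:evol} really does vanish when integrated into the identity — this is exactly the content of Lemma \ref{lemma:martingale}, so I would invoke it rather than reprove it. I expect no further difficulty, as everything reduces to the deterministic flow evolution plus a discrete, explicitly readable update at each jump.
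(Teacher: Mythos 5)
Your proposal is correct and follows essentially the same route as the paper's proof in Appendix~\ref{app:uniqueness}: an induction on the jump index showing that the jump time $T_k$, the jump update, and the deterministic flow evolution between jumps are all determined by the Poisson measure $M$, the flows $(\Phi^u)$ and the state just before the jump, with the density of $\mathcal{D}(\mathcal{G})$ in $\mathcal{C}_b(\mathcal{X})$ (Remark~\ref{rem:densite}) used to upgrade equality of $\langle\bar{Z}_t,f\rangle$ over test functions to equality of measures. Your added remarks (the role of $p_1\equiv 0$ in making each accepted atom a genuine change of $N_t$, and the appeal to Lemma~\ref{lemma:martingale} for the martingale bookkeeping) are sound refinements of details the paper leaves implicit.
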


\begin{proof}
See Section \ref{app:uniqueness} in the appendix.
\end{proof}
 
\begin{lemma}\label{lemma:nonexpl}
Under Assumptions \ref{assu:debut1}(1-3) and \ref{assu:debut2}, the strongly unique sequence of jump times of a solution $\bar{Z}$ to \eqref{eq:evol} tends to infinity as $k$ tends to infinity, almost surely.
\end{lemma}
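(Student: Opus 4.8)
The plan is to argue by contradiction: suppose the sequence $(T_k(\bar{Z}),k\geq 0)$ converges to a finite limit $T_\infty$ with positive probability, and derive a contradiction from a control on the total mass $N_t$ and on the traits up to time $T_\infty$. First I would introduce, for a fixed (large) constant, the stopping times $\tau_K = \inf\{t\geq 0: N_t\geq K\}$ and show that on the event $\{T_\infty<\infty\}$ one has $\tau_K\to T_\infty$ as $K\to\infty$ (since $N_t$ increases by at least one at each jump, using $p_1\equiv 0$ and discarding the non-creative transitions appropriately --- more precisely, a branching event either increases $N$ or leaves it unchanged only when $k=0$, and one controls the number of $k=0$ events separately). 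So it suffices to show that $\sup_{t<T_\infty} N_t<\infty$ almost surely on $\{T_\infty<\infty\}$, which is what will yield the contradiction with $T_k\to T_\infty<\infty$.

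The core of the argument is a moment estimate. Using Assumption \ref{assu:debut2}, apply the martingale problem \eqref{eq:evol} with (an approximating sequence of) the test function $f(u,s,x)=h_{n,\gamma}(x)$, and then pass to the limit in $n$ using Fatou's lemma and the bound $\mathcal{G}h_{n,\gamma}(x)\leq c_1|x|^\gamma+c_2$; on the jump part, Assumption \ref{assu:debut1}(2) controls the sum of the children's traits by $x\vee\ell(t)$ componentwise, so that the total "$|\cdot|^\gamma$-mass" created at a division is bounded (after raising to the power $\gamma$, using $\gamma\geq 1$ one has $(a\vee\ell)^\gamma\le a^\gamma+\ell^\gamma$ componentwise, then sum over coordinates). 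Combining these, and letting $W_t:=\langle Z_t,|\cdot|^\gamma\rangle + N_t$ (or a similar Lyapunov functional linear in the number of individuals plus the $\gamma$-mass), one obtains a Grönwall-type inequality
\[
\E\!\left[W_{t\wedge \tau_K\wedge T_n(\bar{Z})}\right]\leq C_1 + C_2\int_0^t \E\!\left[W_{s\wedge\tau_K\wedge T_n(\bar{Z})}\right]ds,
\]
where $T_n(\bar Z)$ is used as a localizing sequence so that the martingale term vanishes in expectation and the integrals are finite. Grönwall gives $\E[W_{t\wedge\tau_K\wedge T_n(\bar Z)}]\leq C_1 e^{C_2 t}$, and monotone convergence in $n$ then yields $\E[W_{t\wedge\tau_K}]\leq C_1 e^{C_2 t}$ uniformly in $K$.

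From this uniform bound I would conclude: since $N_{t\wedge\tau_K}\leq W_{t\wedge\tau_K}$ and $N_{\tau_K}\geq K$ on $\{\tau_K\leq t\}$, Markov's inequality gives $\mathbb{P}(\tau_K\leq t)\leq C_1 e^{C_2 t}/K\to 0$ as $K\to\infty$, so $\tau_K\to\infty$ a.s., hence $T_\infty\geq \lim_K \tau_K=\infty$ a.s.; this contradicts $T_\infty<\infty$ and proves $T_k\to\infty$. (A small separate point: one must also rule out an accumulation of $k=0$ extinction-type jumps at a finite time; but each such jump removes an individual, so only finitely many can occur before all mass vanishes, after which there are no more jumps at all --- so these do not accumulate either.) I expect the main obstacle to be the careful bookkeeping in the jump term of \eqref{eq:evol}: making precise how Assumption \ref{assu:debut1}(2) together with $\gamma\geq 1$ converts the componentwise fragmentation-with-small-mass-creation bound into a genuine supermartingale-type inequality for $W$, and handling the approximation $h_{n,\gamma}\to|\cdot|^\gamma$ simultaneously with the localization $T_n(\bar Z)$ and $\tau_K$ so that all expectations remain finite and the limits can be taken in the right order.
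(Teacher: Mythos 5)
Your proposal follows essentially the same route as the paper: the heart of both arguments is a Gr\"onwall bound on the functional $W_t=N_t+\langle Z_t,|\cdot|^{\gamma}\rangle$, obtained by applying \eqref{eq:evol} to the constant function $\mathbf{1}$ and to $h_{n,\gamma}$, passing to the limit in $n$ via Assumption \ref{assu:debut2}, and controlling the jump term with Assumption \ref{assu:debut1}(2); the only real difference is packaging (you localize at $\tau_K\wedge T_n(\bar{Z})$ and argue by contradiction, while the paper works directly on $[0,T_k\wedge T]$ and lets $k\to\infty$). One caveat: your side remark that $k=0$ jumps cannot accumulate ``because each removes an individual until all mass vanishes'' is not correct as stated, since births and deaths can alternate indefinitely without the population dying out; the clean way to finish --- available from your own estimate, and implicitly what the paper relies on as well --- is that the expected number of jumps before $t\wedge\tau_K$ equals $\E\bigl[\int_0^{t\wedge\tau_K}\sum_{u\in V_s}B(X_s^u)\,ds\bigr]\le (b_1+b_2)\int_0^t\E[W_{s\wedge\tau_K}]\,ds<\infty$, so the jump times cannot accumulate at a finite time.
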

\begin{proof}
Let $T>0$. To shorten notation, we write $T_k$ instead of $T_k(\bar{Z})$. We prove that almost surely there is no accumulation of jumps on $[0,T]$ of the solution of \eqref{eq:evol} previously constructed on $[0,T_k[$, for all $k\in\mathbb{N}$. Let $k\in\mathbb{N}$ and $\left(\bar{Z}_t,t\leq T_k\right)$ be the solution of \eqref{eq:evol} up to the $k$th division time. Using equation \eqref{eq:evol} applied to the constant function equal to $1$, for all $t\leq T_k\wedge T$, we have
\begin{align}\nonumber
&\mathbb{E}_{\delta_x}\left(N_t\right)=1+\int_0^t \mathbb{E}_{\delta_x}\left(\sum_{u\in V_s} B\left(X_s^u\right)\left(m(X_s^u)-1\right)\right)ds\\\label{eq:nbrmoyen}
&\leq  1+\overline{m}b_1\int_0^t \mathbb{E}_{\delta_x}\left(\sum_{u\in V_s} \left| X_s^u\right|^{\gamma}\right)ds+\overline{m}b_2\int_0^t \mathbb{E}_{\delta_x}\left(N_s\right)ds,
\end{align}
where the inequality comes from Assumption \ref{assu:debut1}(1) and \ref{assu:debut1}(3).
Next, using \eqref{eq:evol} again, we have
\begin{align*}
&\mathbb{E}_{\delta_x}\Bigg[\sum_{u\in V_t}h_{n,{\gamma}}(X_t^u)\Bigg]=h_{n,{\gamma}}(x) +\int_0^t\mathbb{E}_{\delta_x}\left[\sum_{u\in V_s} \mathcal{G}h_{n,{\gamma}}(X_s^u)\right]ds\\
&+\int_0^t\int_{[0,1]}\mathbb{E}_{\delta_x}\left[\sum_{u\in V_s} B(X_s^u)\sum_{k\geq 0} p_k(X_s^u)\left(\sum_{j=1}^k h_{n,{\gamma}}\left(F_j^{(k)}(X_s^u,\theta)\right)-h_{n,{\gamma}}\left(X_s^u\right)\right)\right]d\theta ds,
\end{align*}
where $h_{n,\gamma}$ is introduced in Assumption \ref{assu:debut2}. Letting $n$ tend to infinity and using Assumption \ref{assu:debut2} yields
\begin{align*}
\mathbb{E}_{\delta_x}\Bigg[\sum_{u\in V_t}&|X_t^u|^{\gamma}\Bigg]\leq |x|^{\gamma} +\int_0^t\mathbb{E}_{\delta_x}\left[\sum_{u\in V_s} \left(c_1\left|X_s^u\right|^{\gamma}+c_2\right)\right]ds\\
&+\int_0^t\int_{[0,1]}\mathbb{E}_{\delta_x}\left[\sum_{u\in V_s} B(X_s^u)\sum_{k\geq 0} p_k(X_s^u)\left(\sum_{j=1}^k\left|F_j^{(k)}(X_s^u,\theta)\right|^{\gamma}-\left|X_s^u\right|^{\gamma}\right)\right]d\theta ds.
\end{align*}
Next, using Assumption \ref{assu:debut1}(2) and \ref{assu:debut2}, we get that
 \begin{align*}
\mathbb{E}_{\delta_x}\left[\sum_{u\in V_t}|X_t^u|^{\gamma}\right]\leq  |x|^{\gamma} &+\int_0^t\mathbb{E}_{\delta_x}\left[ \sum_{u\in V_s} \left(c_1\left|X_s^u\right|^{\gamma}+c_2\right)\right]\\
& +\mathbb{E}_{\delta_x}\left[\sum_{u\in V_s} B(X_s^u)\ell(s)\mathbf{1}_{\left\{\left|X_s^u\right|^{\gamma}\leq \ell(s)\right\}}\right]ds.
 \end{align*}
Finally, using Assumption \ref{assu:debut1}(1) and the fact that $t\mapsto \ell(t)$ is increasing, we get
\begin{align*}
\mathbb{E}_{\delta_x}\left[\sum_{u\in V_t}|X_t^u|^{\gamma}\right]\leq |x|^{\gamma}&+c_1\int_0^t \E_{\delta_x}\left[\sum_{u\in V_s} \left|X_s^u\right|^{\gamma}\right]ds\\
&+\left(c_2+\left(b_1\ell(t)+b_2\right)\ell(t)\right)\int_0^t \E_{\delta_x}\left[N_s\right]ds.
\end{align*}
Combining this inequality with \eqref{eq:nbrmoyen}, we obtain for all $t\leq T_k\wedge T$
\begin{align*}
\E_{\delta_x}\left[N_t\right]+\E_{\delta_x}\left[\sum_{u\in V_t}|X_t^u|^{\gamma}\right]\leq 1+|x|^{\gamma}+A(T)\int_0^t \left(\E_{\delta_x}\left[N_s\right]+\E_{\delta_x}\left[\sum_{u\in V_s}|X_s^u|^{\gamma}\right]\right)ds,
\end{align*}
where $A(T)=c_1+c_2+b_1\ell(T)^2+b_2\ell(T)+(b_1+b_2)\overline{m}$.
 According to Gr\"onwall Lemma, for all $t\leq T_k\wedge T$, we get
\begin{align*}
\E_{\delta_x}\left[N_t\right]+\E_{\delta_x}\left[\sum_{u\in V_t}|X_t^u|^{\gamma}\right]\leq \left(1+|x|^{\gamma}\right) e^{A(T)t}<\infty.
\end{align*}

Finally, the average number of individuals in the population at time $t$ is bounded for $t$ in compact sets and there is no explosion of the population in finite time.
 \end{proof}
 
Before moving to the next section , we introduce $\left(R_{s,t},\ t\geq s\right)$, the first-moment semi-group associated with the branching process: for all $s\geq 0$, $t\geq s$ and $x\in\mathcal{X}$, let
\begin{align}\label{eq:firstmoment}
R_{s,t}f(x)=\E\Big[\sum_{u\in V_t} f\left(X_t^u\right)\Big|Z_s=\delta_x\Big],
\end{align}
where $f$ is a measurable function. Applying equation \eqref{eq:evol} to $f\equiv 1$, and taking the expectation yields
\begin{align}\label{eq:mtsx}
R_{s,t}\mathbf{1}(x)=m(x,s,t)=1+\int_s^t \mathbb{E}\Big[\sum_{u\in V_r}B(X_r^u)(m(X_r^u)-1)\Big| Z_s=\delta_x\Big]dr.
\end{align}
In particular, if $B\equiv b$ and $m(x)=m$ for all $x\in\mathcal{X}$, we obtain $m(x,s,t)=e^{b(m-1)(t-s)}$.

Finally, let us recall that for all $0\leq s\leq t$, $R_{s,t}$ is also a linear operator from the set of measures of finite mass into itself through the left action. In particular, for any $x\in \mathcal{X}$, we will denote the measure $\delta_x R_{s,t}(dy)$ by $R_{s,t}(x,dy)$.
\subsection{Some growth-fragmentation models for cell population dynamics}\label{sec:exdebut}

In this section, we consider growth-fragmentation processes: at division, the trait of the ancestor is shared between the children and the number of individuals in the population increases. Moreover, we focus on models where the trait moves according to a diffusion with associated generator of the form
\begin{align*}
\mathcal{G}f(x)=r(x)f'(x)+\sigma^2(x)f''(x),
\end{align*}
where $r$ and $\sigma$ are measurable functions. 
This class covers several dynamics for the trait. Here, we present three of them. In particular, we give an explicit formula for the average number of individuals in the population at time $t$.
We first give a useful equation concerning models with such a dynamic. For all $s\geq 0$, $t\geq s$ and $x\in\mathcal{X}$, applying \eqref{eq:evol} to $\mathbf{Id}(x)=x$ and taking the expectation, we obtain
\begin{align}\label{eq:evolalindentite}
R_{s,t}\mathbf{Id}(x)=x+\int_s^t R_{s,u}r(x)du,
\end{align}
where $(R_{s,t})_{t\geq s}$ is defined in \eqref{eq:firstmoment}.
\subsubsection{Linear growth model}\label{sub:TCP}
We consider here a size-structured model. More precisely, the size of each cell
grows linearly at a rate $a>0$ and this rate is supposed to be identical for each cell. We assume that divisions occur at rate $B(x)=\alpha x$, $\alpha>0$. At fission, the cell splits into two daughter cells of size
$\frac{x}{2}$, where $x$ denotes the size of the mother at splitting. Deciding whether the cells' growth follows a linear or an exponential dynamic has fueled a large debate in the literature (see \cite{cooper2006distinguishing} and references therein). The linear growth model has been considered for example in \cite{doumic2010} for the calibration of a deterministic growth-fragmentation model from experimental data and in \cite{hoang2015estimating} for the estimation of the division rate. 

Using the previous notations, the process $\left(X_t, t\geq 0\right)$ describing the size of a cell starting from $x_0$ is given by
\[
X_{t}=x_{0}+at,
\]
and the associated generator is given for any function $f\in\mathcal{C}^1(\mathbb{R}_+)$ by
\begin{align*}
\mathcal{G}f(x)=af'(x).
\end{align*}
Then, the branching process $(Z_t, t\geq 0)$ is solution of the following equation, for any function $f\in \mathcal{C}^1(\mathbb{R}_+)$ and any $x\in\mathcal{X}$,
\begin{align*}
\langle Z_t,f\rangle=&\langle Z_0,f\rangle +\int_0^t \int_{\mathbb{R}_+} af'(x)Z_s(dx)ds\\
&+\int_0^t\int_{\mathcal{U}\times \mathbb{R}_+}\mathbf{1}_{\left\{u\in V_{s^-},\ z\leq \alpha X_{s^-}^u\right\}}\left(2f\left(\frac{X_{s^-}^u}{2}\right)-f\left(X_{s^-}^u\right)\right)M(ds,du,dz),
\end{align*}
where $M$ is a Poisson point measure on $\mathbb{R}_+\times\mathcal{U}\times \mathbb{R}_+$ with intensity $ds\otimes n(du)\otimes dz$. The first integral corresponds to the dynamic of the population between two divisions. The integral with respect to the Poisson point measure represents to the jump part of the process and the indicator function corresponds to the fact that an individual $u$ jumps at time $s$ if it is in the population at time $s^-$ and if the division rate at $X_{s^-}^u$ is large enough. In this case, it is removed from the population and two descendants with trait $X_{s^-}^u/2$ appear. 

The validity of Assumptions \ref{assu:debut1} and \ref{assu:debut2} is trivial for this model with $\gamma = 1$. 
Let us compute the average number of individuals in the population at time $t$. For all $s\leq t$ and $x\in\mathbb{R}$, we have using \eqref{eq:mtsx}:
\begin{equation}\label{eq:mt}
m(x,s,t)=1+\alpha\int_s^t \mathbb{E}\left(\sum_{u\in V_{r}}X_{r}^{u}\middle| Z_s = \delta_x\right)dr.
\end{equation}
Combining \eqref{eq:evolalindentite} with $r(x)\equiv a$ and \eqref{eq:mt}, we obtain
\begin{align*}
m(x,s,t) =1+\alpha \int_{s}^{t}\left(x+a\int_{s}^{r}m(x,s,\tau)d\tau\right)dr,
\end{align*}
and for all $x\in\mathcal{X}$ and $s\geq 0$, $m(x,s,\cdot)$ is the solution of the following Cauchy problem with unknown $f$:
\[
\begin{cases}
f''(t)=a\alpha f(t),\\
f(s)=1,\ f'(s)=\alpha x.
\end{cases}
\]
with explicit solution given by
\[
m(x,s,t)=\frac{1}{2}\left(e^{\overline{a}(t-s)}+e^{-\overline{a}(t-s)}\right)+\frac{x}{2}\sqrt{\frac{\alpha}{a}}\left(e^{\overline{a}(t-s)}-e^{-\overline{a}(t-s)}\right),
\]
where $\overline{a}=\sqrt{a\alpha}$.
The population size is exponential in time as in the neutral case. 
\subsubsection{Exponential growth model in a varying environment }\label{sub:binary}

We assume here that the growth of the cells is exponential at rate $a$. This exponential growth model has been studied in \cite{doumic2015statistical} in the case of a specific growth rate for each individual in order to infer the division rate of the population. Here, we assume that the division rate is a function of time, mimicking a varying environment. More precisely, we set $B(x,t)=\alpha(t) x$, with $\alpha$ a positive function. The generator for the dynamic of the size is given for any function $f\in\mathcal{C}^1(\mathbb{R}_+)$ by
\begin{align*}
\mathcal{G}f(x)=axf'(x).
\end{align*}
We still assume that the branching is binary and that the size of the descendants at birth are both $x/2$ if $x$ is the size of the mother at splitting. Then, the branching process $(Z_t,t\geq 0)$ is solution of the following equation, for any function $f\in \mathcal{C}^1(\mathbb{R}_+)$ and any $x\in\mathcal{X}$: 
\begin{align*}
\langle Z_t,f\rangle=&\langle Z_0,f\rangle +\int_0^t \int_{\mathbb{R}_+} axf'(x)Z_s(dx)ds\\
&+\int_0^t\int_{\mathcal{U}\times \mathbb{R}_+}\mathbf{1}_{\left\{u\in V_{s^-},\ z\leq \alpha(s) X_{s^-}^u\right\}}\left(2f\left(\frac{X_{s^-}^u}{2}\right)-f\left(X_{s^-}^u\right)\right)M(ds,du,dz),
\end{align*}
where $M$ is a Poisson point measure on $\mathbb{R}_+\times\mathcal{U}\times \mathbb{R}_+$ with intensity $ds\otimes n(du)\otimes dz$.
Moreover, using \eqref{eq:evolalindentite} with $r(x)=ax$, we have
\[
\E\left(\sum_{u\in V_{t}}X_{t}^{u}\big| Z_s=\delta_x\right)=xe^{a(t-s)}.
\]
Combining this with equation \eqref{eq:mtsx}, we obtain
\[
m(x,s,t)=1+x\int_s^t \alpha(r)e^{a(r-s)}dr.
\]
In particular, if $\alpha(r)\equiv \alpha$ with $\alpha$ a positive constant, we get
\[
m(x,s,t)=1+\frac{\alpha x}{a}\left(e^{a(t-s)}-1\right).
\] 
The growth is again exponentially fast in time. 
\subsubsection{Parasite infection model}\label{sub:Kimmel}
This model is a continuous version of Kimmel's multilevel model for plasmids \cite{kimmel1997quasistationarity} which has already been studied in the case of a constant or monotone division rate by Bansaye and Tran in \cite{BT}. It models the proliferation of a parasite infection in a cell population. More precisely, we assume here that the trait $(X_t,t\geq 0)$ is a Markov process describing the quantity of parasites in each cell which evolves as a Feller diffusion process:
\begin{equation*}
X_t=X_0+\int_0^t gX_s ds+\int_0^t \sqrt{2\sigma^2 X_s}dB_s,
\end{equation*}
where $(B_s)_{s\geq 0}$ is standard Brownian motion and $g,\sigma>0$ are some fixed parameters. The generator for the dynamic of the quantity of parasites is given for any function $f\in\mathcal{C}^2(\mathbb{R}_+)$ by
\begin{align*}
\mathcal{G}f(x)=gxf'(x)+\sigma^2xf''(x).
\end{align*}
We assume here that a cell with a quantity $x$ of parasites will potentially divide at a rate $B(x)=\alpha x+\beta $, $\alpha,\beta >0$ into two daughter cells with a quantity $\delta x$ and $(1-\delta)x$ of parasites respectively, where $\delta$ is a random variable with uniform distribution on $[0,1]$. We need $\beta$ to be strictly positive so that even cells without any parasites divide after some time. The branching process $(Z_t,t\geq 0)$ is then solution of the following equation, for any function $f\in \mathcal{C}^2(\mathbb{R}_+)$ and any $x\in\mathcal{X}$: 
\begin{multline*}
\langle Z_t,f\rangle  =\langle Z_0,f\rangle +\int_0^t \int_{\mathbb{R}_+} \left(gxf'(x)+\sigma^2xf''(x)\right)Z_s(dx)ds+\mathcal{M}_t\\
+\int_0^t\int_{\mathcal{U}\times \mathbb{R}_+\times [0,1]}\mathbf{1}_{\left\{u\in V_{s^-},\ z\leq \alpha X_{s^-}^u+\beta\right\}}\left(f\left(\delta X_{s^-}^u\right)+f\left((1-\delta)X_{s^-}^u\right)-f\left(X_{s^-}^u\right)\right)\\
\times M(ds,du,dz,d\delta),
\end{multline*}
where $$\mathcal{M}_t = \int_0^t\sum_{u\in V_s} \sqrt{2\sigma^2 X_s^u}f'(X_s^u)dB^u_s$$
and $M$ is a Poisson point measure on $\mathbb{R}_+\times\mathcal{U}\times \mathbb{R}_+\times [0,1]$ with intensity $ds\otimes n(du)\otimes dz\otimes d\delta$ and $(B_s^u,s\geq 0)_{u\in\mathcal{U}}$ is a family of independent standard Brownian motions. In particular, the generator corresponding to first moment semi-group is given for any function $f\in \mathcal{C}^2(\mathbb{R})$ and $x\in\mathcal{X}$ by
\begin{equation*}
\mathcal{F}_{\text{inf}}f(x)=gxf'(x)+\sigma^2xf''(x)+\left(\alpha x+\beta\right)\left(\int_0^1\left[f\left(\delta x\right)+f\left((1-\delta)x\right)\right]d\delta-f(x)\right).
\end{equation*}
Therefore, we notice that if $(V,\lambda)$ are eigenelements of $\mathcal{F}_{\text{inf}}$, we have $\mathcal{F}_{\text{inf}}V(0)=\beta V(0)$ so that $V(0)=0$ if $\lambda\neq \beta$ and we cannot apply usual techniques using eigenelements requiring that $V> 0$ \cite{cloez}.

Let us compute the average number of individuals in the population after time $t$. Using \eqref{eq:mtsx}, we have
\[
m(x,s,t)=1+\alpha\int_s^t \E\Big[\sum_{u\in V_r} X_r^u\Big| Z_s=\delta_x\Big]dr+\beta\int_s^t m(x,s,r)dr. 
\]
Again, using \eqref{eq:evolalindentite}, we obtain
\[
\E\Big[\sum_{u\in V_r} X_r^u\Big| Z_s=\delta_x\Big]=xe^{g(r-s)}.
\]
Then, combining the two previous equations and differentiating, we get
\begin{align*}
\partial_t m(x,s,t) = \alpha xe^{g(t-s)}+\beta m(x,s,t),
\end{align*}
and finally
\[
m(x,s,t)=\frac{\alpha x}{g-\beta}e^{g(t-s)}+\left(1-\frac{\alpha x}{g-\beta}\right)e^{\beta(t-s)},
\]
if $g\neq \beta$ and:
\[
m(x,s,t)=\left(1+\alpha x(t-s)\right)e^{\beta(t-s)},
\]
if $g=\beta$.
In the three examples above, the mean number of individuals in the population is an affine function of the trait of the initial individual. However, this is not the rule. For example, Cloez developed in \cite[Corollary 6.1]{cloez} the case of a dynamic of the trait following an Ornstein-Uhlenbeck process where the dependence in $x$ is not affine. 

For other examples and comments, including a link with the integro-differential model, we refer to Section \ref{sec:other}.

\section{The trait of sampled individuals at a fixed time : Many-to-One formulas\label{sampling}}
In order to characterize the trait of a uniformly sampled individual, the spinal approach (\cite{chauvin1988kpp},\cite{lyons1995conceptual}), consists in following a "typical" individual in the population whose behavior summarizes the behavior of the entire population. Biggins \cite{biggins1977martingale} used this approach for the study of branching random walks extending Kingman results \cite{kingman}. The spinal approach has then been extended to various frameworks (\cite{harris1996large},\cite{kurtz1997conceptual},\cite{hardy2009spine}). In particular, Georgii and Baake \cite{georgii2003supercritical} used spine techniques in a spectral framework to describe the asymptotic distribution of the trait of a uniformly sampled individual in the population and its ancestral lineage in the case of a finite set of possible trait.

In this section, we specify the generator of the process describing the trait along the spine. The existence of our auxiliary process does not rely on the existence of spectral elements for the mean operator of the branching process. 

With a slight abuse of notation, for all $u\in V_t$ and $s<t$, we denote by $X_s^u$ the trait of the unique ancestor living at time $s$ of $u$. 

\subsection{The auxiliary process}
Let us define
\begin{align*}
\mathcal{D}(\mathcal{A})=\left\lbrace f\in\mathcal{D}(\mathcal{G})\text{ s.t. } m(\cdot,s,t)f(s,x)\in\mathcal{D}(\mathcal{G})\ \forall t\geq 0, s\leq t\right\rbrace.
\end{align*}
From now on, we assume that for all $x\in\mathcal{X}$, $t\geq 0$ and $s\leq t$, $m(x,s,t)\neq 0$.

We now recall the operator and functions needed for the definition of the auxiliary process, and introduce additional notations. For all $f\in\mathcal{D}(\mathcal{A})$, $x\in\mathcal{X}$ and $s<t$, we write
\begin{align}\label{eq:genemov_auxi}
\widehat{\mathcal{G}}_{s}^{(t)}f(x)=\frac{\mathcal{G}\left(m(\cdot,s,t)f\right)(x)-f\left(x\right)\mathcal{G}\left(m(\cdot,s,t)\right)(x)}{m(x,s,t)},
\end{align}
\begin{align}\label{eq:taux_auxi}
\widehat{B}_{s}^{(t)}(x)=B(x)\Lambda(x,s,t),
\end{align}
\begin{align}\label{eq:noyaudiv_auxi}
\widehat{P}_{s}^{(t)}(x,dy)=\Lambda^{-1}(x,s,t)\frac{m(y,s,t)}{m(x,s,t)}m(x,dy),
\end{align}
where
\[
\Lambda(x,s,t)=\int_{\mathcal{X}}\frac{m(y,s,t)}{m(x,s,t)}m(x,dy).
\]

 In order to prove a Many-to-One formula, we need to consider the following assumptions:
\begin{assumptionA}\label{assu:doeblin}
There exists a function $C$ such that for all $j\leq k,\ j,k\in\mathbb{N}$ and $0\leq s\leq t$, we have
\[
\sup_{x\in\mathcal{X}}\sup_{s\in[0,t]}\int_\mathcal{X}\frac{m(y,s,t)}{m(x,s,t)}P_j^{(k)}(x,dy)\leq C(t),\ \forall t\geq 0.
\]
\end{assumptionA}

\begin{assumptionA}\label{assu:differentiable}
For all $t\geq 0$, we have
\begin{itemize}
\item[-] for all $x\in\mathcal{X}$, $s\mapsto m(x,s,t)$ is differentiable on $[0,t]$ and its derivative is continuous on $[0,t]$,
\item[-] for all $x\in\mathcal{X}$, $f\in\mathcal{D}(\mathcal{A})$, $s\mapsto \mathcal{G}(m(\cdot,s,t)f)(x)$ is continuous,
\item[-]$\mathcal{D}(\mathcal{A})$ is dense in $\mathcal{C}_b(\mathcal{X})$ for the topology of uniform convergence.
\end{itemize}

\end{assumptionA}

The last item of this assumption allows us to extend our formulas to all measurable functions with respect to the Skorokod topology using a monotone class argument. Moreover, combining Lemma \ref{lemma:mdansG} and Remark \ref{rem:densite}, this assumption is in particular satisfied if $\mathcal{D}(\mathcal{G})$ is stable by product.
\begin{thm}\label{th:mto}
Under Assumptions \ref{assu:debut1}(1-3), \ref{assu:debut2}, \ref{assu:doeblin} and \ref{assu:differentiable}, for all $t\geq 0$, for all $x_0\in\mathcal{X}$ and for all non-negative measurable functions $F:\mathbb{D}\left([0,t],\mathcal{X}\right)\rightarrow\mathbb{R}_+$, we have
\begin{equation}\label{mtomesurable}
\E_{\delta_{x_0}}\left[\sum_{u\in V_{t}}F\left(X_{s}^{u},s\leq t\right)\right]=m(x_0,0,t)\E_{x_0}\left[F\left(Y_{s}^{(t)},s\leq t\right)\right],
\end{equation}
where $\left(Y_{s}^{(t)}, s\leq t\right)$ is a time-inhomogeneous Markov process whose law is characterized by its associated infinitesimal generators $\left(\mathcal{A}_{s}^{(t)}\right)_{s\leq t}$ given for $f\in\mathcal{D}(\mathcal{A})$ and $x\in\mathcal{X}$ by
\begin{align}
\mathcal{A}_{s}^{(t)}f(x)= & \widehat{\mathcal{G}}_{s}^{(t)}f(x)
  +\widehat{B}_{s}^{(t)}(x)\int_{\mathcal{\mathcal{X}}}\left(f\left(y\right)-f\left(x\right)\right)\widehat{P}_{s}^{(t)}\left(x,dy\right).\label{eq:gene_auxi}
\end{align}

\end{thm}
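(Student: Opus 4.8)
The plan is to establish the Many-to-One formula in two stages: first prove the "process-level" identity \eqref{mtomesurable} for functionals of the form $F(x_{[0,t]}) = \prod_{i=0}^n f_i(x_{s_i})$ (finite-dimensional marginals), thereby identifying the law of the spine, and then upgrade to arbitrary non-negative measurable $F$ via a monotone class argument, which is exactly what the density statement in Assumption \ref{assu:differentiable} is designed for. The backbone of the first stage is a spinal decomposition of the branching dynamics \eqref{eq:evol}: I would introduce the tagged-particle (size-biased) process along the lines of \eqref{eq:mto_poid}, picking at each division one daughter and re-weighting, and then verify that the re-weighted law is precisely that of the time-inhomogeneous Markov process with generator \eqref{eq:gene_auxi}.

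First I would set up the key intertwining. Using \eqref{eq:evol} applied to the function $(u,s,x)\mapsto m(x,s,t)f(s,x)$ for $f\in\mathcal{D}(\mathcal{A})$ — note $m(\cdot,s,t)f(s,\cdot)\in\mathcal{D}(\mathcal{G})$ by definition of $\mathcal{D}(\mathcal{A})$ — and taking expectations, one gets an evolution equation for $\E_{\delta_{x_0}}[\sum_{u\in V_s} m(X_s^u,s,t)f(s,X_s^u)]$. The crucial computation is that the "generator" term appearing there, after dividing through by $m(x,s,t)$ and using the identity \eqref{eq:mtsx} for $\partial_s m$ together with the branching contribution, reorganizes exactly into $\mathcal{A}_s^{(t)}f(x) + (\text{time derivative terms})$: the $\mathcal{G}$-part contracts to $\widehat{\mathcal{G}}_s^{(t)}$ because of the Leibniz-type cancellation in \eqref{eq:genemov_auxi}, and the jump part $B(x)\sum_k p_k(x)\sum_j [m(F_j^{(k)},s,t)f(s,F_j^{(k)}) - m(x,s,t)f(s,x)]$ becomes $\widehat{B}_s^{(t)}(x)\int (f(y)-f(x))\widehat{P}_s^{(t)}(x,dy)$ after pulling out $m(y,s,t)/m(x,s,t)$ and recognizing $\Lambda(x,s,t)$ and the normalized kernel $\widehat{P}_s^{(t)}$. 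The leftover terms $\partial_s m(x,s,t)$ (whose regularity is guaranteed by the first item of Assumption \ref{assu:differentiable}) must cancel against the $\mathcal{G}(m(\cdot,s,t))$ piece via \eqref{eq:mtsx}; here Assumption \ref{assu:doeblin} is what guarantees the various integrals are finite and the manipulations (Fubini, differentiation under the integral) are legitimate, and continuity in $s$ from Assumption \ref{assu:differentiable} is what lets one read off a genuine time-inhomogeneous generator rather than just an integrated identity.

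Concretely I would define $\psi_s := \E_{\delta_{x_0}}[\sum_{u\in V_s} m(X_s^u,s,t)\,g_s(X_s^u)]$ for a suitable family $(g_s)$ solving the backward equation $\partial_s g_s + \mathcal{A}_s^{(t)} g_s = 0$ with terminal condition $g_t = f$ (existence of such a family being part of what "the law is characterized by the generators" means), show $\psi_s$ is constant in $s$ by differentiating and invoking the cancellation above, and then evaluate at $s=0$ (giving $m(x_0,0,t)\,g_0(x_0)$) and at $s=t$ (giving $\E_{\delta_{x_0}}[\sum_{u\in V_t} f(X_t^u)]$). This yields the one-dimensional Many-to-One formula; the multi-time version follows by the Markov property of the branching process at the intermediate times combined with an induction, using that conditionally on the configuration at time $s_i$ each subtree restarts from its current trait. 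Finally, the extension from this algebra of functionals to all non-negative measurable $F$ on $\mathbb{D}([0,t],\mathcal{X})$ is a monotone class / functional monotone class argument, using that $\mathcal{D}(\mathcal{A})$ is dense in $\mathcal{C}_b(\mathcal{X})$ (third item of Assumption \ref{assu:differentiable}) so that finite products of such functions generate the Borel $\sigma$-field of the Skorokhod space.

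I expect the main obstacle to be the integrability and interchange-of-limits bookkeeping rather than the algebraic identity: one must justify applying \eqref{eq:evol} to the unbounded-in-$x$ test function $m(\cdot,s,t)f(s,\cdot)$ (which is where the growth control from Assumptions \ref{assu:debut1} and \ref{assu:debut2}, the moment bounds from Lemma \ref{lemma:nonexpl}, and the uniform control of $m(y,s,t)/m(x,s,t)$ from Assumption \ref{assu:doeblin} all enter, presumably via a truncation and the functions $h_{n,\gamma}$), and one must justify that the backward evolution equation for $(g_s)$ has a solution in $\mathcal{D}(\mathcal{A})$ regular enough in $s$ — this is exactly why the continuity hypotheses in Assumption \ref{assu:differentiable} are imposed. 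Once those analytic points are secured, the identification of $\mathcal{A}_s^{(t)}$ is a direct, if lengthy, rearrangement.
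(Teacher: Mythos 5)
Your skeleton --- single-time identity obtained by plugging the weighted test function $m(\cdot,s,t)f(s,\cdot)$ into \eqref{eq:evol}, then the multi-time version by the branching/Markov property and induction, then a monotone class argument --- matches the paper's, and your algebraic identification of $\mathcal{A}_s^{(t)}$ from the resulting drift and jump terms is exactly the content of Lemmas \ref{lemma:autreauxi} and \ref{lemma:mdansG}. Where you diverge is in how you close the argument, and that is where the gap sits. You propose to take a family $(g_s)$ solving the backward equation $\partial_s g_s+\mathcal{A}_s^{(t)}g_s=0$ with $g_t=f$, and to show that $\psi_s=\E_{\delta_{x_0}}\left[\sum_{u\in V_s}m(X_s^u,s,t)\,g_s(X_s^u)\right]$ is constant. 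But the existence of such a family in $\mathcal{D}(\mathcal{A})$ --- and, more fundamentally, the existence and uniqueness in law of a Markov process $Y^{(t)}$ with generators $(\mathcal{A}_s^{(t)})$, which you need in order to read $g_0(x_0)$ as $\E_{x_0}[f(Y_t^{(t)})]$ --- is precisely part of what the theorem asserts. Declaring it to be ``part of what the statement means'' is circular, and none of the stated assumptions hands you solvability of the backward Kolmogorov equation for the full generator (motion part plus a non-local jump part with a time-dependent and generally unbounded rate $\widehat{B}_s^{(t)}$).

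The paper is built to sidestep exactly this. It \emph{defines} the auxiliary semi-group directly from the branching process by \eqref{eq:semigroup_auxi}, so existence and conservativity come for free and the single-time Many-to-One formula is the definition itself; the real work is to show that this semi-group is the unique solution of the forward equation \eqref{eq:mu} (Lemma \ref{prop:L'ensemble-des-mesures}), so that its law is indeed characterized by $(\mathcal{A}_s^{(t)})$. For that uniqueness it never solves the full backward equation: it uses the explicit family $Q_{s,r}^{(t)}f=A_{r-s}(m(\cdot,r,t)f)/m(\cdot,s,t)$, built from the known semi-group of the underlying motion, which cancels only the drift part $\widetilde{\mathcal{G}}_s^{(t)}$ of \eqref{eq:mu}; the remaining jump term is controlled by localization, Assumption \ref{assu:doeblin} and Gr\"onwall. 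If you want to keep your duality/constancy route you must either prove well-posedness of the (time-inhomogeneous) martingale problem for $\mathcal{A}_s^{(t)}$ separately, or take $g_s:=P_{s,t}^{(t)}f$ as defined through the branching process itself --- at which point the constancy of $\psi_s$ reduces to the branching property and you have rejoined the paper's argument.
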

Formula \eqref{mtomesurable} has a natural interpretation in terms of semi-groups. If $f$ is a non-negative measurable function, for any $0\leq r\leq s\leq t$ and any $x\in\mathcal{X}$, we set 
\begin{align}\label{eq:semigroup_auxi}
P_{r,s}^{(t)}f(x):=\frac{\mathbb{E}\left[\sum_{u\in V_t} f\left(X_s^u\right)\Big| Z_r=\delta_x\right]}{m(x,r,t)}=\mathbb{E}\left[f\left(Y_s^{(t)}\right)\Big|Y_r^{(t)}=x\right].
\end{align}
In other words, $\left(P_{r,s}^{(t)}, r\leq s\leq t\right)$ is a conservative time-inhomogeneous semi-group i.e. for all $r\leq u\leq s\leq t$, $P_{r,u}^{(t)}P_{u,s}^{(t)}=P_{r,s}^{(t)}$, and the auxiliary process $Y^{(t)}$ is its time-inhomogeneous associated Markov process corresponding to the right-hand side of \eqref{eq:mto_poid}. We can exhibit this process using a change of probability measure. Indeed, by Feynman-Kac's formula \cite[Section 1.3]{del2004feynman}, we have
\begin{align*}
P_{r,s}^{(t)}f(x)=m(x,r,t)^{-1}\mathbb{E}\left[e^{\int_r^sB(X_v)(m(X_v)-1)dv}m(X_s,s,t)f(X_s)\Big|X_r=x\right],
\end{align*}
where the Markov process $(X_s,r\leq s\leq t)$ corresponds to dynamic of the tagged-particle which infinitesimal generator $\mathcal{M}$ is given by
\begin{align*}
\mathcal{M}f(x)=\mathcal{G}f(x)+B(x)m(x)\sum_{k\geq 0}\frac{kp_k(x)}{m(x)}\frac{1}{k}\sum_{i=1}^k\int_{\mathcal{X}}\left(f(y)-f(x)\right)P_i^{(k)}(x,dy).
\end{align*}
Then, the change of probability measure given by the $\sigma(X_l,l\leq s)$-martingale
\begin{align*}
M_s^{(t)}:=\frac{e^{\int_r^sB(X_s)(m(X_s)-1)ds}m(X_s,s,t)}{m(x,r,t)},\quad \text{for }r\leq s\leq t
\end{align*}
exhibits the probability measure corresponding to the auxiliary process.

Before proving Theorem \ref{th:mto}, we give some links between our approach and previous works on this subject. 
First, in the neutral case, i.e. $B$ and $(p_{k})_{k\in\mathbb{N}}$ constants, the auxiliary process coincides with the one in \cite{bansaye2011limit} i.e. for all $f\in \mathcal{D}(\mathcal{G})$ and $x\in\mathcal{X}$, the infinitesimal generator of the auxiliary process is given by
\begin{align*}
\mathcal{A}f(x)=\mathcal{G}f(x)+Bm\sum_{k\geq 0}\widehat{p}_k\left(\frac{1}{k}\sum_{j=1}^k \int_{\mathcal{X}}\left(f(y)-f(x)\right)P_j^{(k)}(x,dy)\right),
\end{align*}
where $\widehat{p}_k=kp_km^{-1}$ denote the biased reproduction law.
In the general case, the dynamic of the auxiliary process heavily depends on the comparison between $m(x,s,t)$ and  $m(y,s,t)$, for $x,y\in\mathcal{X}$. It emphasizes several bias due to growth of the population. First, the auxiliary process jumps more than the original process, if jumping is beneficial in terms of number of descendants. This phenomenon of time-acceleration also appears for examples in \cite{chauvin1988kpp}, \cite{lyons1995conceptual} or \cite{hardy2009spine}. Moreover, the reproduction law favors the creation of a large number of descendants as in \cite{bansaye2011limit} and the non-neutrality favors individuals with an "efficient" trait at birth in terms of number of descendants. Finally, a new bias appears on the dynamic of the trait because of the combination of the random evolution of the trait and non-neutrality. Indeed, if the dynamic of the trait is deterministic, we have $\widehat{\mathcal{G}}_s^{(t)}f(x)=\mathcal{G}f(x)$.

The auxiliary process could be guessed through a discretization of the model using the expression of the
auxiliary process in \cite[eq. 1]{bansaye2013}. However, the proof of Theorem \ref{th:mto} is a direct continuous
time approach relying on the uniqueness of
the solution to the integro-differential equation \eqref{eq:mu}. The proof is decomposed in four parts: first, in Lemma \ref{prop:L'ensemble-des-mesures}, we prove that the integro-differential equation \eqref{eq:mu} admits a unique solution which corresponds to the semi-group of the auxiliary process defined in \eqref{eq:semigroup_auxi}. Afterwards, in Lemma \ref{lemma:autreauxi}, we prove that the infinitesimal generator of this auxiliary process verifies \eqref{eq:gene_auxi}. Then, we prove Theorem \ref{th:mto} for any function such that $F(x)=f_1(x_{t_1})\ldots f_k(x_{t_k})$, $x\in\mathbb	{D}([0,t],\mathcal{X})$, by induction on $k\in\mathbb{N}$. Finally, we extend the set of functions for which \eqref{mtomesurable} is satisfied using a monotone class argument.

Let $t\geq 0$. We define the following family of semi-groups for $f\in\mathcal{D}(\mathcal{A})$:
\begin{align*}
Q_{s,r}^{(t)}f(x)=\frac{A_{r-s}(m(\cdot,r,t)f)(x)}{m(x,s,t)},\ s\leq r\leq t.
\end{align*}
We also define
\begin{align}\label{eq:genemarkov}
\widetilde{\mathcal{G}}_s^{(t)}f(x)=\frac{\mathcal{G}(m(\cdot,s,t)f)(x)+f(x)\partial_sm(x,s,t)}{m(x,s,t)}.
\end{align}
\begin{lemma}
\label{prop:L'ensemble-des-mesures}Let $t\geq 0$. Under Assumptions \ref{assu:debut1}(1-3), \ref{assu:debut2}, \ref{assu:doeblin} and \ref{assu:differentiable}, for all $x_0\in\mathcal{X}$ and $t_0\leq t$, the family of probability measures $\left(P_{t_0,s}^{(t)}(x_0,\cdot),t_0\leq s\leq t\right)$ is the unique solution of the following equation with unknown $\left(\mu_{t_0,s}(x_0,\cdot),\ t_0\leq s\leq t\right)$:
\begin{align}\nonumber 
\mu_{t_0,s}\left(x_{0},f\right)= & f\left(t_0,x_{0}\right)+\int_{t_0}^{s}\int_{\mathcal{X}}\left(\widetilde{\mathcal{G}}_{r}^{(t)}f(r,x)+\partial_{r}f(r,x)\right)\mu_{t_0,r}\left(x_{0},dx\right)dr\label{eq:mu}\\
 & +\int_{t_0}^{s}\int_{\mathcal{\mathcal{X}}}\left[\widehat{B}_{r}^{(t)}(x)\int_{\mathcal{X}}f\left(r,y\right)\widehat{P}_{r}^{(t)}(x,dy)-B(x)f\left(r,x\right)\right]\mu_{t_0,r}\left(x_{0},dx\right)dr,
\end{align} 
for all function $f\in \mathcal{D}(\mathcal{A})$ such that $s\mapsto f(s,x)$ is continuously differentiable for all $x\in\mathcal{X}$.
\end{lemma}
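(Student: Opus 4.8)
The proof has two parts: that $\big(P^{(t)}_{t_0,s}(x_0,\cdot)\big)_{t_0\le s\le t}$ solves \eqref{eq:mu}, and that it is the only solution.

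\textbf{Step 1: $\big(P^{(t)}_{t_0,s}(x_0,\cdot)\big)$ solves \eqref{eq:mu}.} The starting point is the forward evolution equation of the first-moment semigroup. Taking expectations in \eqref{eq:evol} (legitimate by Theorem~\ref{th:existence}, Lemma~\ref{lemma:martingale} and the moment estimates of Lemma~\ref{lemma:nonexpl}) applied to a test function $(u,r,x)\mapsto g(r,x)$ with $g(r,\cdot)\in\mathcal{D}(\mathcal{G})$ and $r\mapsto g(r,x)$ continuously differentiable, one gets, exactly as in \eqref{eq:mtsx}, for all $t_0\le s$ and $x_0\in\mathcal{X}$,
\begin{align*}
\mathbb{E}\Big[\sum_{u\in V_s}g(s,X^u_s)\,\Big|\,Z_{t_0}=\delta_{x_0}\Big]={}&g(t_0,x_0)\\
&+\int_{t_0}^{s}\mathbb{E}\Big[\sum_{u\in V_r}\Big(\mathcal{G}g(r,\cdot)(X^u_r)+\partial_r g(r,X^u_r)+B(X^u_r)\Big(\int_{\mathcal{X}}g(r,y)\,m(X^u_r,dy)-g(r,X^u_r)\Big)\Big)\,\Big|\,Z_{t_0}=\delta_{x_0}\Big]dr.
\end{align*}
Next I would take $g(r,x)=m(x,r,t)f(r,x)$ for $f\in\mathcal{D}(\mathcal{A})$ with $r\mapsto f(r,x)$ continuously differentiable; this is an admissible test function because $m(\cdot,r,t)f(r,\cdot)\in\mathcal{D}(\mathcal{G})$ by the definition of $\mathcal{D}(\mathcal{A})$ and $r\mapsto m(x,r,t)$ is $C^1$ by Assumption~\ref{assu:differentiable}. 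Dividing by $m(x_0,t_0,t)$ and using the branching identity $\mathbb{E}[\sum_{u\in V_t}\psi(X^u_s)\mid Z_{t_0}=\delta_{x_0}]=\mathbb{E}[\sum_{v\in V_s}\psi(X^v_s)\,m(X^v_s,s,t)\mid Z_{t_0}=\delta_{x_0}]$, i.e. $P^{(t)}_{t_0,s}\psi(x_0)=R_{t_0,s}\big(m(\cdot,s,t)\psi\big)(x_0)/m(x_0,t_0,t)$, the left-hand side becomes $\mu_{t_0,s}(x_0,f)$ with $\mu=P^{(t)}$, while each term of the integrand converts by the algebraic identities
\begin{align*}
\mathcal{G}\big(m(\cdot,r,t)f(r,\cdot)\big)(x)+\partial_r\big(m(x,r,t)f(r,x)\big)&=m(x,r,t)\big(\widetilde{\mathcal{G}}^{(t)}_r f(r,x)+\partial_r f(r,x)\big),\\
B(x)\Big(\int_{\mathcal{X}}m(y,r,t)f(r,y)\,m(x,dy)-m(x,r,t)f(r,x)\Big)&=m(x,r,t)\Big(\widehat{B}^{(t)}_r(x)\int_{\mathcal{X}}f(r,y)\,\widehat{P}^{(t)}_r(x,dy)-B(x)f(r,x)\Big),
\end{align*}
the first being precisely the definition \eqref{eq:genemarkov} of $\widetilde{\mathcal{G}}^{(t)}$, and the second following from $\widehat{B}^{(t)}_r(x)\,\widehat{P}^{(t)}_r(x,dy)=B(x)\,m(y,r,t)\,m(x,dy)/m(x,r,t)$. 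This yields \eqref{eq:mu} for $P^{(t)}$; that each $P^{(t)}_{t_0,s}(x_0,\cdot)$ is a probability measure follows from $P^{(t)}_{t_0,s}\mathbf{1}(x_0)=R_{t_0,t}\mathbf{1}(x_0)/m(x_0,t_0,t)=1$ and positivity of $R_{t_0,s}$.

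\textbf{Step 2: uniqueness.} Here I would pass to a mild (variation-of-constants) form of \eqref{eq:mu} using the semigroup $Q^{(t)}$. For fixed $s\le t$, apply \eqref{eq:mu} to the time-dependent test function $r\mapsto Q^{(t)}_{r,s}f$, $f\in\mathcal{D}(\mathcal{A})$. Since $Q^{(t)}$ is a semigroup satisfying the backward equation $\partial_r Q^{(t)}_{r,s}f=-\widetilde{\mathcal{G}}^{(t)}_r Q^{(t)}_{r,s}f$ and $Q^{(t)}_{s,s}f=f$, the $\widetilde{\mathcal{G}}^{(t)}$- and $\partial_r$-terms in \eqref{eq:mu} cancel and one obtains
\begin{align*}
\mu_{t_0,s}(x_0,f)=Q^{(t)}_{t_0,s}f(x_0)+\int_{t_0}^{s}\int_{\mathcal{X}}\Big(\widehat{B}^{(t)}_r(x)\int_{\mathcal{X}}Q^{(t)}_{r,s}f(y)\,\widehat{P}^{(t)}_r(x,dy)-B(x)\,Q^{(t)}_{r,s}f(x)\Big)\mu_{t_0,r}(x_0,dx)\,dr.
\end{align*}
If $\mu^1,\mu^2$ are two solutions, the difference $\delta_r:=\mu^1_{t_0,r}(x_0,\cdot)-\mu^2_{t_0,r}(x_0,\cdot)$ then satisfies the homogeneous linear Volterra equation $\delta_s(f)=\int_{t_0}^{s}\delta_r\big(K^{(s)}_r f\big)\,dr$ with $K^{(s)}_r f:=\widehat{B}^{(t)}_r\int_{\mathcal{X}}Q^{(t)}_{r,s}f(y)\,\widehat{P}^{(t)}_r(\cdot,dy)-B\,Q^{(t)}_{r,s}f$. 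Iterating this relation $n$ times and letting $n\to\infty$ forces $\delta_s\equiv0$, once the kernels $K^{(s)}_r$ are shown to act in a controlled way — here Assumption~\ref{assu:doeblin} together with Assumption~\ref{assu:debut1}(3) give $\Lambda(x,r,t)=\int_{\mathcal{X}}m(y,r,t)\,m(x,dy)/m(x,r,t)\le C(t)\,\overline{m}$, so that $\widehat{B}^{(t)}_r\le C(t)\,\overline{m}\,B$ and $\widehat{P}^{(t)}_r$ is a genuine probability kernel, and the moment estimates of Lemma~\ref{lemma:nonexpl} tame the growth coming from the unbounded factor $B$.

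\textbf{Main obstacle.} The crux is exactly this last control: $B$, hence $\widehat{B}^{(t)}$, is unbounded, so the $K^{(s)}_r$ are not bounded on $\mathcal{C}_b(\mathcal{X})$ and a crude Gronwall estimate in the uniform norm does not close. I would handle it by working in the space of functions dominated by the Lyapunov weight $V(x)=1+|x|^{\gamma}$, checking that $Q^{(t)}$ and the jump part of $K^{(s)}_r$ send a $V$-weighted space into a $V$-weighted space with constants locally bounded in time, and invoking the a priori estimates of Lemma~\ref{lemma:nonexpl} (which control $\mu^i_{t_0,r}(x_0,\mathbf{1})$ and the $V$-moments of $\mu^i_{t_0,r}(x_0,\cdot)$ locally uniformly in $r$); alternatively, one localises on the events $\{\sup_{[t_0,s]}|X|\le k\}$, on which $B$ is bounded, runs the Gronwall argument there, and lets $k\to\infty$, as is standard for such integro-differential equations. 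The other technical point is verifying that $r\mapsto Q^{(t)}_{r,s}f$ is an admissible test function for \eqref{eq:mu} — continuously differentiable in $r$ and staying in $\mathcal{D}(\mathcal{A})$ — which again relies on Assumption~\ref{assu:differentiable}.
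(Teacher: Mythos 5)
Your proposal follows essentially the same route as the paper: the existence part is identical (apply \eqref{eq:evol} to $(s,x)\mapsto m(x,s,t)f(s,x)$ and divide by $m(x_0,t_0,t)$), and for uniqueness the paper uses exactly the test function $r\mapsto Q^{(t)}_{r,s}f$ to cancel the drift terms followed by a Gr\"onwall argument, implemented via the localisation you name as your ``alternative'' (stopping the flow on a ball of radius $n$ and stopping the measures when they charge its complement, then letting $n\to\infty$). The one technical point your sketch glosses over is that $Q^{(t)}_{r,s}$ is not a contraction on $\mathcal{C}_b(\mathcal{X})$ even after localisation --- its numerator is a single-particle expectation while its denominator $m(x,r,t)$ is a branching mean --- and the paper resolves this by minorizing $m(x,r,t)$ through the no-branching event, which yields the bound $\exp\left(r\sup_{y\in\mathcal{B}(x,n)}B(y)\right)$ on the stopped semi-group; this fits naturally inside your localisation scheme, so the argument closes as you describe.
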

\begin{proof} Let $t\geq 0$ and let $f$ be as in the statement of the lemma. The proof falls naturally into two parts. We begin by proving that $\left(P_{t_0,s}^{(t)}(x_0,\cdot),t_0\leq s\leq t\right)$ is a solution of \eqref{eq:mu}. First, we notice that for all $t_0\leq s\leq t,\ x_0\in\mathcal{X}$, \[m(x_0,t_0,t)P_{t_0,s}^{(t)}f(t_0,x_0)=\E\left(\langle Z_{s},f(s,\cdot)m(\cdot,s,t)\rangle \big|Z_{t_0}=\delta_{x_0}\right).\]
Indeed, from \eqref{eq:semigroup_auxi}, we have
\begin{align*}
m(x_0,t_0,t)P_{t_0,s}^{(t)}f(t_0,x_0) & =\E\Big[\sum_{v\in V_{s}}\sum_{
u\in V_{t},u\geq v}
f\left(s,X_{s}^{v}\right)\Big|Z_{t_0}=\delta_{x_0}\Big]\\
&=\E\Big[\sum_{v\in V_{s}}f\left(s,X_{s}^{v}\right)\E\Big(\sum_{u\in V_{t},u\geq v}1\Big|\mathcal{F}_{s}\Big)\Big|Z_{t_0}=\delta_{x_0}\Big]\\
 & =\E\Big[\sum_{v\in V_{s}}f\left(s,X_{s}^{v}\right)\E\Big(\sum_{\begin{subarray}{c}
u\in V_{t}\end{subarray}}1\Big|Z_s=\delta_{X_s^v}\Big)\Big|Z_{t_0}=\delta_{x_0}\Big]\\
& =\E\Big[\sum_{v\in V_{s}}f\left(s,X_{s}^{v}\right)m\left(X_{s}^{v},s,t\right)\Big|Z_{t_0}=\delta_{x_0}\Big].
\end{align*}
Then, applying \eqref{eq:evol} to the function $(x,s)\mapsto f(s,x)m(x,s,t)$ and taking the expectation, we obtain
\begin{multline}\label{eq:nu}
\E\left[\sum_{v\in V_{s}}f\left(s,X_{s}^{v}\right)m\left(X_{s}^{v},s,t\right)\Big|Z_{t_0}=\delta_{x_0}\right]= m(x_0,t_0,t)f\left(t_0,x_{0}\right)\\
+\int_{t_0}^{s}\int_{\mathcal{X}}\left(\mathcal{G}\left(f(r,\cdot)m(\cdot,r,t)\right)\left(x\right)+f(r,x)\partial_{r}m(x,r,t)+\partial_r f(r,x)m(x,r,t)\right)R_{t_0,r}(x_0,dx)dr\\
  +\int_{t_0}^{s}\int_{\mathcal{X}}B(x)\left(\sum_{k\geq0}p_{k}(x)\sum_{j=1}^{k}\int_{\mathcal{X}}f\left(r,y\right)m\left(y,r,t\right)P_{j}^{(k)}\left(x,dy\right)-f\left(r,x\right)m\left(x,r,t\right)\right)\\
  \times R_{t_0,r}(x_0,dx)dr.
\end{multline}
Finally, factorizing by $m(x,r,t)$ in the last two terms and dividing by $m(x_0,t_0,t)$, we obtain that the family of probability measures $\left(P_{t_0,s}^{(t)}(x_0,\cdot),t_0\leq s\leq t\right)$ is a solution of \eqref{eq:mu}.

We now prove the uniqueness of a solution to \eqref{eq:mu}.  Without loss of generality, we assume that $t_0=0$.
This part of the proof is adapted from \cite{BT}. Let $\left(\gamma_{s,t}^{1}, s\leq t\right)$ and $\left(\gamma_{s,t}^{2},s\leq t\right)$ be two solutions of equation \eqref{eq:mu}. Let us recall that the total variation norm is given for all measures $\gamma^1,\gamma^2$ on $\mathcal{X}$ with finite mass by
\[
\left\Vert \gamma^{1}-\gamma^{2}\right\Vert _{TV}=\sup_{\phi\in\mathcal{C}_{b}\left(\mathcal{X},\mathbb{R}\right),\left\Vert \phi\right\Vert _{\infty}\leq1}\left|\gamma^{1}(\phi)-\gamma^{2}\left(\phi\right)\right|,
\]
where $\mathcal{C}_{b}\left(\mathcal{X},\mathbb{R}\right)$ denotes the set of continuous bounded functions from $\mathcal{X}$ to $\mathbb{R}$.
The idea is to find a function which cancels the first integral in \eqref{eq:mu}.
Let $x\in\mathcal{X}$, $t\geq 0$ and $r\leq t$.
We begin by computing the differential of $\left(Q_{s,r}^{(t)}f(x),\ s\leq r\leq t\right)$ with respect to $s$. First, $s\mapsto A_{r-s}(m(\cdot,r,t)f)(x)$ is differentiable because $x\mapsto m(x,r,t)f(x)\in \mathcal{D}(\mathcal{G})$ and according to the backward equation, its derivative is $s\mapsto \mathcal{G}(A_{r-s}(m(\cdot,r,t)f))(x)=A_{r-s}(\mathcal{G}(m(\cdot,r,t)f))(x)$. Furthermore, $s\mapsto m(x,s,t)^{-1}$ is differentiable because $s\mapsto m(x,s,t)$ is differentiable according to the first point of Assumption \ref{assu:differentiable} and because $m(x,s,t)\neq 0$ for all $x\in\mathcal{X}$, $t\geq 0$ and $s\leq t$.
Then, for all $s\geq 0$ and $r\geq s$, we have
\begin{align*}
\partial_sQ_{s,r}^{(t)}f(x)&=\frac{\partial_sA_{r-s}\left(m(\cdot,r,t)f\right)(x)}{m(x,s,t)}-\frac{\partial_sm(x,s,t)}{m(x,s,t)^2}A_{r-s}\left(m(\cdot,r,t)f\right)(x)\\
&=-\frac{\mathcal{G}\left(A_{r-s}\left(m(\cdot,r,t)f\right)\right)(x)}{m(x,s,t)}-\frac{\partial_sm(x,s,t)}{m(x,s,t)}\frac{A_{r-s}\left(m(\cdot,r,t)f\right)(x)}{m(x,s,t)}\\
&=-\left(\frac{\mathcal{G}\left(m(\cdot,r,t)Q_{s,r}^{(t)}f\right)(x)}{m(x,s,t)}+\frac{\partial_sm(x,s,t)}{m(x,s,t)}Q_{s,r}^{(t)}f(x)\right).
\end{align*}
Therefore, for all $s\leq t$ and $f\in \mathcal{D}\left(\mathcal{A}\right)$, we have
\begin{equation}
\partial_s Q_{s,r}^{(t)}f(x)=-\widetilde{\mathcal{G}}_s^{(t)}Q_{s,r}^{(t)}f(x).\label{eq:5}
\end{equation}
Let $f\in \mathcal{D}\left(\mathcal{A}\right)$ be such that $\left\Vert f\right\Vert _{\infty}\leq1$. 
Let us consider $\tau_n(x)=\inf\left\lbrace t\geq 0, X_t\notin \mathcal{B}(x,n)\right\rbrace$, where $\mathcal{B}(x,n)=\left\lbrace y\in\mathcal{X},\ \left| x-y\right|\leq n\right\rbrace$. For all $x\in\mathcal{X}$, $s\leq r\leq t$ and $n\in\mathbb{N}$, we define
\begin{align*}
Q_{s,r}^{(t),n}f(x)=\frac{\mathbb{E}_x\left[m(X_{r\wedge\tau_n(x)-s},r\wedge\tau_n(x),t)f(X_{r\wedge\tau_n(x)-s})\right]}{m(x,s,t)}.
\end{align*}
We still have $\partial_s Q_{s,r}^{(t),n}f(x)=-\widetilde{\mathcal{G}}_s^{(t)}Q_{s,r}^{(t),n}f(x)$. Moreover, for all $s\leq r\leq t$ and all $x\in\mathcal{X}$, we have
\begin{align*}
\left|Q_{s,r}^{(t),n}f(x)\right|& \leq \frac{\mathbb{E}_x\left[m(X_{r\wedge \tau_n(x)-s},r\wedge \tau_n(x),t)\right]}{m(x,s,t)}\\
& \leq \frac{\mathbb{E}_x\left[m(X_{r\wedge \tau_n(x)-s},r\wedge \tau_n(x),t)\right]}{\E\left[\mathbf{1}_{\Omega_{r\wedge \tau_n(x)}}m(X_{r\wedge \tau_n(x)}^{\emptyset},r\wedge \tau_n(x),t)\big|Z_s=\delta_x\right]},
\end{align*}
where $\Omega_r=\left\lbrace T_1(\bar{Z})> r \right\rbrace$. Conditioning with respect to $\sigma\left(X_s^{\emptyset}, s\leq r\wedge \tau_n(x)\right)$ on the denominator, we obtain
\begin{align}\nonumber
\left|Q_{s,r}^{(t),n}f(x)\right|&\leq\frac{\mathbb{E}_x\left[m(X_{r\wedge \tau_n(x)-s},r\wedge \tau_n(x),t)\right]}{\E\left[\exp\left(-\int_0^{r\wedge\tau_n(x)} B(X_u^{\emptyset})du\right)m(X_{r\wedge \tau_n(x)}^{\emptyset},r\wedge \tau_n(x),t)\Big|X^{\emptyset}_s=x\right]}\\\label{eq:majoration}
&\leq \frac{\mathbb{E}\left[m(X_{r\wedge \tau_n(x)},r\wedge \tau_n(x),t)\big|X_s=x\right]}{\exp\left(-r \overline{B}_n(x)\right)\E\left[m(X_{r\wedge \tau_n(x)}^{\emptyset},r\wedge \tau_n(x),t)\right|\left.X^{\emptyset}_s=x\right]}\leq e^{r\overline{B}_n(x)},
\end{align}
where $\overline{B}_n(x)=\sup_{y\in\mathcal{B}(x,n)}B\left(y\right)$.

Let $T_{n}=\inf\left\{ s\leq t,\gamma^1_{s,t}\left(x_0,\mathcal{B}(x_0,n)^C\right)+\gamma^2_{s,t}\left(x_0,\mathcal{B}(x_0,n)^C\right)> 0\right\} $ where $\mathcal{B}(x_0,n)^C$ is the complementary of $\mathcal{B}(x_0,n)$ with the convention that $\inf \emptyset =+\infty$. Then, using that $\left(\gamma_{s,t}^i,s\leq t\right)$, for $i=1,2$, are solutions of \eqref{eq:mu}, we have for all $s\leq r\leq t$
\begin{align*}
&\langle \gamma_{s\wedge T_{n},t}^i(x_0,\cdot),Q_{s\wedge T_{n},r}^{(t),n}f\rangle= Q_{0,r}^{(t),n}f(x_0)\\
&+\int_{0}^{s\wedge T_{n}}\int_{\mathcal{\mathcal{X}}}\left[\widehat{B}_{u}^{(t)}\left(x\right)\int_{\mathcal{X}}Q_{u,r}^{(t),n}f(y)\widehat{P}_{u}^{(t)}\left(x,dy\right)-B(x)Q_{u,r}^{(t),n}f(x)\right]\gamma_{u,t}^i\left(x_0,dx\right)du.
\end{align*}
Using \eqref{eq:majoration}, we get
\begin{align*}
 & \left|\gamma_{s\wedge T_{n},t}^{1}\left(x_0,Q_{s\wedge T_{n},r}^{(t),n}f\right)-\gamma_{s\wedge T_{n},t}^{2}\left(x_0,Q_{s\wedge T_{n},r}^{(t),n}f\right)\right|\\
  \begin{split}
 & =\Bigg|\int_{0}^{s\wedge T_{n}}\int_{\mathcal{X}}\left[\widehat{B}_{u}^{(t)}\left(x\right)\int_{\mathcal{X}}Q_{u,r}^{(t),n}f\left(y\right)\widehat{P}_{u}^{(t)}\left(x,dy\right)-B(x)Q_{u,r}^{(t),n}f\left(x\right)\right]\\
&\hspace{7cm} \times \left(\gamma_{u,t}^{1}-\gamma_{u,t}^{2}\right)\left(x_0,dx\right)du\Bigg|
\end{split}\\
\begin{split}
& =\Bigg|\int_{0}^{s\wedge T_{n}}\int_{\mathcal{X}}B(x)\left[\int_{\mathcal{X}}Q_{u,r}^{(t),n}f\left(y\right)\frac{m(y,u,t)}{m(x,u,t)}m(x,dy)-Q_{u,r}^{(t),n}f\left(x\right)\right]\\
&\hspace{7cm} \times\left(\gamma_{u,t}^{1}-\gamma_{u,t}^{2}\right)\left(x_0,dx\right)du\Bigg|
 \end{split}\\
&  \leq (C(t)\overline{m}+1)e^{r\overline{B}_{r(n,x_0)}(x_0)}\overline{B}_{n}(x_0)\int_{0}^{s\wedge T_{n}}\left\Vert \gamma_{u,t}^{1}-\gamma_{u,t}^{2}\right\Vert _{TV}du\\
&  \leq (C(t)\overline{m}+1)e^{r\overline{B}_{r(n,x_0)}(x_0)}\overline{B}_{n}(x_0)\int_{0}^{s}\left\Vert \gamma_{u\wedge T_{n},t}^{1}-\gamma_{u\wedge T_{n},t}^{2}\right\Vert _{TV}du,
\end{align*}
where $r(n,x_0)=4n+2|x_0|+\ell(s)$ and $C(t)$ is defined in Assumption \ref{assu:doeblin}. Then Gr\"{o}nwall's lemma implies that $\left\Vert \gamma_{s\wedge T_{n},t}^{1}-\gamma_{s\wedge T_{n},t}^{2}\right\Vert _{TV}=0.$
Taking the limit as $n$ tends to $+\infty$, we obtain $\left\Vert \gamma_{s,t}^{1}-\gamma_{s,t}^{2}\right\Vert _{TV}=0$ and the uniqueness
of the solution to \eqref{eq:mu}. 
\end{proof}
\begin{lemma}\label{lemma:autreauxi}
Let $t\geq 0$. Under Assumption \ref{assu:differentiable}, the generator of $\left(P_{r,s}^{(t)},\ r\leq s\leq t\right)$, the semi-group defined in \eqref{eq:semigroup_auxi}, is $\left(\mathcal{A}_s^{(t)},\ s\leq t\right)$ defined on $\mathcal{D}(\mathcal{A})$.
\end{lemma}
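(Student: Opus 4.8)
\emph{Strategy.} The plan is to recognize equation \eqref{eq:mu} --- which by Lemma \ref{prop:L'ensemble-des-mesures} is solved uniquely by the semi-group $\left(P_{r,s}^{(t)}\right)$ --- as the weak (forward) evolution equation attached to the time-inhomogeneous family $\left(\mathcal{A}_s^{(t)}\right)_{s\leq t}$, and then to read off the infinitesimal generator. The only genuine computation is to re-express the operator $\widetilde{\mathcal{G}}_s^{(t)}$ of \eqref{eq:genemarkov}, which appears in \eqref{eq:mu}, through the operator $\widehat{\mathcal{G}}_s^{(t)}$ of \eqref{eq:genemov_auxi}, which appears in \eqref{eq:gene_auxi}. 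Subtracting the two definitions, for $f\in\mathcal{D}(\mathcal{A})$ and $x\in\mathcal{X}$,
\[
\widetilde{\mathcal{G}}_s^{(t)}f(x)-\widehat{\mathcal{G}}_s^{(t)}f(x)=\frac{f(x)}{m(x,s,t)}\left(\partial_s m(x,s,t)+\mathcal{G}\left(m(\cdot,s,t)\right)(x)\right),
\]
so the whole lemma rests on a Kolmogorov backward equation for $m$.

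\emph{Backward equation for $m$.} First I would specialize identity \eqref{eq:nu} (established inside the proof of Lemma \ref{prop:L'ensemble-des-mesures} by applying \eqref{eq:evol} to $(r,x)\mapsto f(r,x)m(x,r,t)$, which is licit since $m(\cdot,r,t)\in\mathcal{D}(\mathcal{G})$ by Lemma \ref{lemma:mdansG}) to $f\equiv 1$. Its left-hand side is then $\mathbb{E}\left[\sum_{v\in V_s}m(X_s^v,s,t)\,\big|\,Z_{t_0}=\delta_{x_0}\right]=m(x_0,t_0,t)$ by the conservativeness computation at the start of that proof, so
\[
0=\int_{t_0}^{s}\int_{\mathcal{X}}H(x,r)\,R_{t_0,r}(x_0,dx)\,dr,\qquad t_0\leq s\leq t,
\]
with $H(x,r)=\mathcal{G}\left(m(\cdot,r,t)\right)(x)+\partial_r m(x,r,t)+B(x)\left(\int_{\mathcal{X}}m(y,r,t)\,m(x,dy)-m(x,r,t)\right)$. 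Differentiating in $s$ and letting $s\downarrow t_0$ (the integrand being right-continuous at $r=t_0$ by the regularity of Assumption \ref{assu:differentiable}, and $R_{t_0,t_0}(x_0,\cdot)=\delta_{x_0}$) yields $H(x_0,t_0)=0$ for every $x_0\in\mathcal{X}$ and $t_0\leq t$. Since $\widehat{B}_s^{(t)}(x)=B(x)\Lambda(x,s,t)$ with $\Lambda(x,s,t)=\int_{\mathcal{X}}\frac{m(y,s,t)}{m(x,s,t)}m(x,dy)$, this says precisely that $\partial_s m(x,s,t)+\mathcal{G}\left(m(\cdot,s,t)\right)(x)=-m(x,s,t)\left(\widehat{B}_s^{(t)}(x)-B(x)\right)$, and hence $\widetilde{\mathcal{G}}_s^{(t)}f(x)=\widehat{\mathcal{G}}_s^{(t)}f(x)+\left(B(x)-\widehat{B}_s^{(t)}(x)\right)f(x)$.

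\emph{Conclusion.} Plugging this into the integrand of \eqref{eq:mu}, the spurious term $\left(B(x)-\widehat{B}_r^{(t)}(x)\right)f(r,x)$ cancels against $-B(x)f(r,x)$ and recombines with $\widehat{B}_r^{(t)}(x)\int_{\mathcal{X}}f(r,y)\widehat{P}_r^{(t)}(x,dy)$, so the integrand becomes exactly $\widehat{\mathcal{G}}_r^{(t)}f(r,x)+\partial_r f(r,x)+\widehat{B}_r^{(t)}(x)\int_{\mathcal{X}}\left(f(r,y)-f(r,x)\right)\widehat{P}_r^{(t)}(x,dy)=\mathcal{A}_r^{(t)}f(r,\cdot)(x)+\partial_r f(r,x)$. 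By \eqref{eq:semigroup_auxi}, for time-independent $f\in\mathcal{D}(\mathcal{A})$ equation \eqref{eq:mu} thus reads $P_{s,r}^{(t)}f(x)=f(x)+\int_{s}^{r}P_{s,u}^{(t)}\left(\mathcal{A}_u^{(t)}f\right)(x)\,du$ for all $s\leq r\leq t$. Dividing $\left(P_{s,s+h}^{(t)}f(x)-f(x)\right)/h$ and letting $h\downarrow0$ gives $\lim_{h\downarrow0}h^{-1}\left(P_{s,s+h}^{(t)}f(x)-f(x)\right)=\mathcal{A}_s^{(t)}f(x)$, which is the assertion; the right-continuity at $u=s$ of $u\mapsto P_{s,u}^{(t)}\left(\mathcal{A}_u^{(t)}f\right)(x)$ needed here follows from conservativeness of $\left(P_{r,s}^{(t)}\right)$, strong continuity of $\left(A_t\right)$, and the continuity in $s$ of $s\mapsto\mathcal{G}\left(m(\cdot,s,t)f\right)(x)$ and $s\mapsto\partial_s m(x,s,t)$ granted by Assumption \ref{assu:differentiable}.

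\emph{Main obstacle.} The delicate point is the backward equation for $m$: verifying that $m(\cdot,r,t)\in\mathcal{D}(\mathcal{G})$, that \eqref{eq:evol} may be applied to $(r,x)\mapsto m(x,r,t)$, and that the resulting identity may be legitimately differentiated in $s$ and its limits exchanged --- exactly the content of Lemma \ref{lemma:mdansG} and the continuity clauses of Assumption \ref{assu:differentiable}. The algebraic recombination of the integrand of \eqref{eq:mu} and the final passage from the integral equation to the infinitesimal generator are routine.
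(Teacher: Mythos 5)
Your proof is correct, and it reaches the generator by a slightly different mechanism than the paper. The paper's proof never touches equation \eqref{eq:mu}: it first computes $\partial_t R_{s,t}g=\mathcal{R}g$ from \eqref{eq:evol}, performs a first-order Taylor expansion of $P_{r,r+h}^{(t)}f(x)=R_{r,r+h}\left(m(\cdot,r+h,t)f\right)(x)/m(x,r,t)$ to obtain the generator in the intermediate form $\widetilde{\mathcal{G}}_r^{(t)}f+\widehat{B}_r^{(t)}\int f\,\widehat{P}_r^{(t)}-Bf$, and only then invokes Lemma \ref{lemma:mdansG} to trade $\widetilde{\mathcal{G}}$ for $\widehat{\mathcal{G}}$. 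You instead recycle the forward integral equation \eqref{eq:mu} already established in Lemma \ref{prop:L'ensemble-des-mesures}, rewrite its integrand as $\mathcal{A}_r^{(t)}f+\partial_r f$, and differentiate at the left endpoint; and you rederive the backward identity $\partial_s m+\mathcal{G}m+B\left(\int m\,m(x,dy)-m\right)=0$ by specializing \eqref{eq:nu} to $f\equiv 1$ rather than by the direct limit computation of Appendix \ref{app:mdansG} (you still correctly lean on Lemma \ref{lemma:mdansG} for the domain membership $m(\cdot,s,t)\in\mathcal{D}(\mathcal{G})$, without which neither $\mathcal{G}m$ nor the application of \eqref{eq:evol} makes sense, so that lemma is not bypassed). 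The two routes share the decisive algebraic identity $\widetilde{\mathcal{G}}_s^{(t)}f=\widehat{\mathcal{G}}_s^{(t)}f+\bigl(B-\widehat{B}_s^{(t)}\bigr)f$; what yours buys is a cleaner derivation of that identity (conservativeness of \eqref{eq:nu} at $f\equiv 1$ instead of the term-by-term limit $A+B+C$) and economy, since the heavy lifting was already done in Lemma \ref{prop:L'ensemble-des-mesures}; what the paper's buys is independence from \eqref{eq:mu}, so that the generator computation stands on its own. The only point where your write-up is at the same (implicit) level of rigor as the paper rather than above it is the passage $\lim_{h\downarrow 0}h^{-1}\int_s^{s+h}P_{s,u}^{(t)}\bigl(\mathcal{A}_u^{(t)}f\bigr)(x)\,du=\mathcal{A}_s^{(t)}f(x)$, which needs joint right-continuity in $u$ of both the operator and the semi-group; you correctly identify Assumption \ref{assu:differentiable} as the ingredient supplying it.
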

For the proof of this Lemma, we need a preliminary result which proof is given in Section \ref{app:mdansG} in the appendix.
\begin{lemma}\label{lemma:mdansG}
For all $t\geq 0$ and $s\leq t$, 
\begin{align*}
\mathcal{G}(m(\cdot,s,t))(x) = & \lim_{r\rightarrow 0}\frac{\mathbb{E}(m(X_r,s,t)\big|X_0=x)-m(x,s,t)}{r}\\
 = & -\partial_{s}m(x,s,t)+B\left(x\right)m(x,s,t)\\
& -B\left(x\right)\sum_{k\geq0}p_{k}\left(x\right)\sum_{j=1}^{k}\int_{\mathcal{X}}m(y,s,t)P_{j}^{(k)}\left(x,dy\right).
\end{align*}
\end{lemma}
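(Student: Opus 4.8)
The plan is to establish the formula by a one-step (first-jump) analysis of the first-moment semi-group $\left(R_{s,t}\right)_{t\ge s}$ of \eqref{eq:firstmoment}, combined with the differentiability of $s\mapsto m(x,s,t)$ granted by Assumption \ref{assu:differentiable}. Observe first that, by the moment estimate obtained in the proof of Lemma \ref{lemma:nonexpl}, $m(y,s,t)=R_{s,t}\mathbf{1}(y)=\mathbb{E}_{\delta_y}[N_{t-s}]$ grows at most like $|y|^{\gamma}$ in $y$, so $m(\cdot,s,t)$ need not lie in $\mathcal{D}(\mathcal{G})$: the content of the lemma is precisely that the difference quotient $r^{-1}\big(\mathbb{E}[m(X_r,s,t)\mid X_0=x]-m(x,s,t)\big)$ converges, and to identify its limit, which is then denoted $\mathcal{G}(m(\cdot,s,t))(x)$. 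Two structural facts will be used: the semi-group identity $R_{s',t}=R_{s',s}R_{s,t}$ coming from the branching property, that is, $m(x,s',t)=R_{s',s}\big(m(\cdot,s,t)\big)(x)$ for $0\le s'\le s\le t$; and the fact that $(X_t)$ is a \emph{time-homogeneous} $\mathcal{X}$-valued Markov process, all dependence on external time being carried by the clock component of the trait, so that $\Phi(x,s',s)\overset{d}{=}X_{s-s'}$ issued from $x$.

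Fix $x\in\mathcal{X}$ and $0<s\le t$, and set $\Delta(x,r):=\mathbb{E}[m(X_r,s,t)\mid X_0=x]-m(x,s,t)$. For $0<r<s$, apply the identity above with $s'=s-r$ and decompose $R_{s-r,s}\big(m(\cdot,s,t)\big)(x)$ according to the behaviour on $(s-r,s]$ of the single initial particle: either it does not branch, an event of conditional probability $\exp\!\big(-\int_{s-r}^{s}B(X_v)\,dv\big)=1-B(x)r+o(r)$ on which it contributes $m\big(\Phi(x,s-r,s),s,t\big)$; or it branches exactly once, whose total contribution, after the offspring have evolved up to time $s$, is $B(x)r\sum_{k\ge0}p_k(x)\sum_{j=1}^{k}\int_{\mathcal{X}}m(y,s,t)P_j^{(k)}(x,dy)+o(r)$; or it branches at least twice, which is $o(r)$. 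Using the time-homogeneity above, $\mathbb{E}\big[m(\Phi(x,s-r,s),s,t)\big]=m(x,s,t)+\Delta(x,r)$, so that
\begin{align*}
m(x,s-r,t)=(1-B(x)r)\big(m(x,s,t)+\Delta(x,r)\big)+B(x)r\sum_{k\ge0}p_k(x)\sum_{j=1}^{k}\int_{\mathcal{X}}m(y,s,t)P_j^{(k)}(x,dy)+o(r).
\end{align*}

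To conclude, expand the left-hand side using the first item of Assumption \ref{assu:differentiable}, $m(x,s-r,t)=m(x,s,t)-r\,\partial_s m(x,s,t)+o(r)$, a derivative taken at the fixed point $x$ so that no uniformity in $x$ is needed. Substituting, cancelling $m(x,s,t)$, dividing by $r$ and letting $r\to0$ (note $\Delta(x,r)\to0$ since $m(\cdot,s,t)$ is continuous and $X$ right-continuous) gives
\begin{align*}
\lim_{r\to0}\frac{\Delta(x,r)}{r}=-\partial_s m(x,s,t)+B(x)m(x,s,t)-B(x)\sum_{k\ge0}p_k(x)\sum_{j=1}^{k}\int_{\mathcal{X}}m(y,s,t)P_j^{(k)}(x,dy),
\end{align*}
which is the announced identity. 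The boundary case $s=0$ is handled by the analogous decomposition run forward from $m(x,0,t)=R_{0,r}\big(m(\cdot,r,t)\big)(x)$, or by continuity in $s$ of both sides.

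The main obstacle is the rigorous control of the $o(r)$ remainders, since $m(\cdot,s,t)$ is unbounded. This is done using the componentwise bound $\sum_{i=1}^{k}F_i^{(k)}(x,\theta)\le x\vee\ell(t)$ of Assumption \ref{assu:debut1}(2), which keeps the traits of all descendants bounded over a window of length $r$, together with the polynomial moment estimate $\mathbb{E}_{\delta_x}\big[\sum_{u\in V_r}|X_r^u|^{\gamma}\big]\le(1+|x|^{\gamma})e^{A(T)r}$ from the proof of Lemma \ref{lemma:nonexpl}, and the continuity of $B$, of $m(\cdot)$, of the maps $F_j^{(k)}(\cdot,\theta)$ and of $m(\cdot,\cdot,\cdot)$; these ensure both that a second branching in a window of length $r$ costs $o(r)$ and that the one-branch term converges, by dominated convergence, to the stated mean-offspring integral.
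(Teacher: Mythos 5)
Your argument is correct and rests on the same two pillars as the paper's proof (a first-jump decomposition over a short time window, plus the differentiability of $s\mapsto m(x,s,t)$ from Assumption \ref{assu:differentiable}), but the bookkeeping is genuinely different. The paper works forward from time $s$: it splits the difference quotient directly into three terms $A+B+C$, where $A$ compares the single tagged particle to the non-branching population, $B$ isolates the contribution of the event that a division occurs, and $C$ produces $-\partial_s m$ via the identity $\mathbb{E}(\langle Z_r,m(\cdot,s,t)\rangle\mid Z_s=\delta_x)=R_{s,r}(R_{s,t}\mathbf{1})(x)$ and the rewriting $C=R_{s,r}\bigl(\tfrac{m(\cdot,s,t)-m(\cdot,r,t)}{r-s}\bigr)(x)$. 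You instead expand $m(x,s-r,t)=R_{s-r,s}(m(\cdot,s,t))(x)$ backward over $(s-r,s]$, Taylor-expand the left-hand side in the second time variable, and solve for $\Delta(x,r)/r$; the algebra checks out and recovers the stated limit. One concrete advantage of your arrangement: the paper's term $C$ needs the difference quotient $h^{-1}(m(x,s+h,t)-m(x,s,t))$ to converge uniformly on compacts in $x$ (which it invokes explicitly, and which goes slightly beyond the literal statement of Assumption \ref{assu:differentiable}), whereas your version only uses pointwise differentiability at the fixed $x$. The technical points you defer to the last paragraph (continuity in $x$ of $B$, $p_k$, $P_j^{(k)}$ along the pre-division trajectory, domination of the unbounded integrand $m(\cdot,s,t)$ via the moment bound of Lemma \ref{lemma:nonexpl}, and the convergence $m(y,v,t)\to m(y,s,t)$ as $v\to s$ for the offspring contribution) are exactly the ones the paper also leaves implicit, so your level of rigour matches the original.
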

We can now prove Lemma \ref{lemma:autreauxi}.
\begin{proof}Let $t\geq 0$ and $f\in\mathcal{D}(\mathcal{A})$. If we take the expectation of \eqref{eq:evol} and differentiate with respect to $t$, for all function $g$ such that $g(s,\cdot)\in \mathcal{D}(\mathcal{A})$, we get that
\begin{align*}
\partial_t R_{s,t}g(x,s)=\mathcal{G}g(x,s)+\partial_s g(x,s)+B(x)\left(\int_{\mathcal{X}} g(y,s)m(x,dy)-g(x,s)\right):=\mathcal{R}g(x,s),
\end{align*}
for all $x\in\mathcal{X}$ and $s\leq t$, because $t\mapsto \mathbb{E}\left[\langle Z_t,f\rangle\right]$ is continuous whenever $f$ is continuous. 
Next, according to Assumption \ref{assu:differentiable}, we have the following first order Taylor expansion: for all $x\in\mathcal{X}$, $r<t$ and $h>0$,
\begin{align*}
P_{r,r+h}^{(t)}f(x)&= \frac{R_{r,r+h}(m(\cdot,r+h,t)f)(x)}{m(x,r,t)}\\
&=f(x)+\frac{\mathcal{R}(m(\cdot,r,t)f)(x)}{m(x,r,t)}h+\frac{\partial_r m(x,r,t)f(x)}{m(x,r,t)}h+o(h).
\end{align*}
Then,
\begin{align*}
\lim_{h\rightarrow 0} \frac{P_{r,r+h}^{(t)}f(x)-f(x)}{h}=\frac{\mathcal{R}(m(\cdot,r,t)f)(x)}{m(x,r,t)}+\frac{\partial_r m(x,r,t)f(x)}{m(x,r,t)},
\end{align*}
and we get
\begin{align}\label{eq:auxi1}
\mathcal{A}_r^{(t)}f(x)= \widetilde{\mathcal{G}}_{r}^{(t)}f(x)+\widehat{B}_{r}^{(t)}(x)\left[\int_{\mathcal{X}}f\left(y\right)\widehat{P}_{r}^{(t)}(x,dy)-B(x)f\left(x\right)\right],\quad r\leq t.
\end{align}
Combining Lemma \ref{lemma:mdansG} and \eqref{eq:auxi1}, we obtain formula \eqref{eq:gene_auxi} for the generator of the auxiliary process.
\end{proof}
\begin{proof}[Proof of Theorem \ref{th:mto}]
We prove the result by induction on $k\in\mathbb{N}$ for any separable function $F=f_1\ldots f_k$ with $f_i\in \mathcal{D}(\mathcal{A})$ for all $i=1\ldots k$.
We consider the following proposition denoted by $\mathcal{H}_{k}$: for all $0<s_{1}\leq s_{2}\leq\ldots  \leq s_{k}\leq t$, for all $x_0\in\mathcal{X}$ and $f_1,\ldots,f_k\in\mathcal{D}(\mathcal{A})$,
\begin{align*}
\E_{\delta_{x_0}}\left[\sum_{u\in V_{t}}f_{1}\left(X_{s_{1}}^{u}\right)\ldots  f_{k}\left(X_{s_{k}}^{u}\right)\right]=m\left(x_0,0,t\right)\E_{x_0}\left[f_{1}\left(Y_{s_{1}}^{(t)}\right)\ldots  f_{n}\left(Y_{s_{k}}^{(t)}\right)\right].
\end{align*}

First, $\mathcal{H}_1$ holds by definition \eqref{eq:semigroup_auxi}. Assuming that $\mathcal{H}_{k-1}$ is true for some $k>1$, we now prove $\mathcal{H}_k$. Let $0<s_{1}\leq s_{2}\leq\ldots  \leq s_{k}\leq t$ and $f_1,\ldots,f_k $ be measurable non-negative functions such that $f_i\in \mathcal{D}(\mathcal{A})$ for all $1\leq i\leq k$. Using the Markov property, we have
\begin{align*}
 & \E_{\delta_{x_0}}\left[\sum_{u\in V_{t}}f_{1}\left(X_{s_{1}}^{u}\right)\ldots  f_{k}\left(X_{s_{k}}^{u}\right)\right]\nonumber \\
 & =\E_{\delta_{x_0}}\left[\sum_{u\in V_{s_{k-1}}}f_{1}\left(X_{s_{1}}^{u}\right)\ldots  f_{k-1}\left(X_{s_{k-1}}^{u}\right)\E\left[\sum_{v\in V_{t},v\geq u}
f_{k}\left(X_{s_{k}}^{v}\right)\middle|\mathcal{F}_{s_{k-1}}\right]\right]\nonumber \\
 & =\E_{\delta_{x_0}}\left[\sum_{u\in V_{s_{k-1}}}f_{1}\left(X_{s_{1}}^{u}\right)\ldots  f_{k-1}\left(X_{s_{k-1}}^{u}\right)\E\left[\sum_{v\in V_{t}}f_{k}\left(X_{s_{k}}^{v}\right)\middle|Z_{s_{k-1}}=\delta_{X_{s_{k-1}}^{u}}\right]\right].\nonumber 
 \end{align*}
 We can now use the result proved in the case $k=1$ and the last term on the right hand side is equal to
 \begin{align*}
  & \E_{\delta_{x_0}}\left[\sum_{u\in V_{s_{k-1}}}\prod_{i=1}^{k-1}f_{i}\left(X_{s_{i}}^{u}\right)m\left(X_{s_{k-1}}^{u},s_{k-1},t\right)\E\left[f_{k}\left(Y_{s_{k}}^{\left(t\right)}\right)\Big|Y_{s_{k-1}}^{(t)}=X_{s_{k-1}}^{u}\right]\right]\nonumber \\
 &= \E_{\delta_{x_0}}\left[\sum_{u\in V_{t}}f_{1}\left(X_{s_{1}}^{u}\right)\ldots  f_{k-1}\left(X_{s_{k-1}}^{u}\right)\E\left[f_{k}\left(Y_{s_{k}}^{\left(t\right)}\right)\middle|Y_{s_{k-1}}^{(t)}=X_{s_{k-1}}^{u}\right]\right]\nonumber \\
 & =m(x_0,0,t)\E_{x_0}\left[f_{1}\left(Y_{s_{1}}^{(t)}\right)\ldots  f_{k-1}\left(Y_{s_{k-1}}^{(t)}\right)\E\left[f_{k}\left(Y_{s_{k}}^{\left(t\right)}\right)\middle|Y_{s_{k-1}}^{(t)}\right]\right],
\end{align*}
where the last equality is obtained using the induction hypothesis.

Finally, using Assumption \ref{assu:differentiable} and a monotone-class argument, we extend the result to all measurable function with respect to the Skorokod topology (see details in Appendix \ref{app:monotone}).
\end{proof}

We now develop two other Many-to-One formulas: one to characterize the trait of the individuals over the whole tree and the other to characterize the trait of a couple of individuals.
\subsection{A Many-to-One formula for the whole tree\label{sub:mtotree}}

We denote by:
\[\mathcal{T}=\bigcup_{s\geq 0} V_s\subset \mathcal{U},\]
the set of all individuals in the population. For $u\in\mathcal{\mathcal{T}}$, we denote by $\alpha(u)$ and $\beta(u)$ the random variables
representing respectively the time of birth and death of $u$. 
\begin{prop}\label{prop:mtotree}
Under Assumptions \ref{assu:debut1},\ref{assu:debut2}, \ref{assu:doeblin} and \ref{assu:differentiable}, for all $x_0\in\mathcal{X}$ and for any non-negative measurable function $F:\mathbb{D}\left(\mathbb{R}_+,\mathcal{X}\right)\times\mathbb{R}_{+}\rightarrow\mathbb{R}_+$, 
we have
\begin{equation}\label{eq:mtoarbre}
\E_{\delta_{x_0}}\left[\sum_{u\in\mathcal{T}}F\left(X_{[0,\beta(u))}^{u},\beta(u)\right)\right]=\int_{0}^{+\infty}m(x_0,0,s)\E_{x_0}\left[F\left(Y_{[0,s)}^{(s)},s\right)B\left(Y_{s}^{(s)}\right)\right]ds.
\end{equation}
\end{prop}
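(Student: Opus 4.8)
The plan is to reduce the sum over the whole tree $\mathcal{T}$ to a sum over living individuals at fixed times by writing each individual's death time as the first jump after its birth, and then to apply the Many-to-One formula of Theorem \ref{th:mto} at each of these times. Concretely, I would first establish that for a non-negative measurable functional $F$ on $\mathbb{D}(\mathbb{R}_+,\mathcal{X})\times\mathbb{R}_+$,
\[
\sum_{u\in\mathcal{T}}F\left(X_{[0,\beta(u))}^u,\beta(u)\right)
=\sum_{k\ge 1}\sum_{u\in V_{T_{k-1}(\bar Z)}}F\left(X_{[0,T_k(\bar Z))}^u,T_k(\bar Z)\right)\mathbf{1}_{\{u\notin V_{T_k(\bar Z)}\}},
\]
since every individual belongs to the population on an interval $[\alpha(u),\beta(u))$ whose right endpoint is a jump time of $\bar Z$ at which $u$ disappears. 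The main work is then to show that the right-hand side, after taking expectations, equals the claimed integral. The natural route is to disintegrate over the death time: writing $\beta(u)=s$ and conditioning on $\mathcal{F}_{s^-}$, the instantaneous rate at which a living individual $v\in V_{s^-}$ with trait $X_{s^-}^v$ dies is $B(X_{s^-}^v)\,ds$, so a compensation/Campbell-type argument applied to the Poisson point measure $M$ in \eqref{eq:evol} should give
\[
\E_{\delta_{x_0}}\left[\sum_{u\in\mathcal{T}}F\left(X_{[0,\beta(u))}^u,\beta(u)\right)\right]
=\int_0^{+\infty}\E_{\delta_{x_0}}\left[\sum_{u\in V_s}F\left(X_{[0,s)}^u,s\right)B\left(X_s^u\right)\right]ds.
\]
More carefully, one applies \eqref{eq:evol} (or its pathwise analogue for path-dependent functionals, obtained by a monotone-class extension as in Theorem \ref{th:mto}) to an appropriate test functional of the form $g(u,s,\cdot)$ that records the trajectory of $u$ up to the current time; the jump term of \eqref{eq:evol} produces exactly the sum over dying individuals weighted by $B$, while the other terms vanish once one subtracts the "survival" part, because we only retain individuals that actually die at a jump.

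Once the identity above is in hand, for each fixed $s\ge 0$ I would apply the path-dependent Many-to-One formula \eqref{mtomesurable} with $t=s$ to the non-negative measurable functional on $\mathbb{D}([0,s],\mathcal{X})$ given by $\omega\mapsto F(\omega_{[0,s)},s)B(\omega_s)$ (here $s$ and the outer $F$ are frozen parameters). This yields
\[
\E_{\delta_{x_0}}\left[\sum_{u\in V_s}F\left(X_{[0,s)}^u,s\right)B\left(X_s^u\right)\right]
=m(x_0,0,s)\,\E_{x_0}\left[F\left(Y_{[0,s)}^{(s)},s\right)B\left(Y_s^{(s)}\right)\right],
\]
and integrating in $s$ over $[0,+\infty)$ gives \eqref{eq:mtoarbre}. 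Note that Theorem \ref{th:mto} is invoked here with the full strength of Assumptions \ref{assu:debut1}--\ref{assu:differentiable} (in particular Assumption \ref{assu:debut1}(4), ensuring every individual eventually divides, so that $\mathcal{T}=\bigcup_s V_s$ is accounted for correctly and there is no "immortal" contribution missing), and one uses Tonelli's theorem to justify interchanging the expectation and the $ds$-integral, all integrands being non-negative.

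The step I expect to be the main obstacle is the rigorous passage from the sum over $\mathcal{T}$ to the time-integral representation, i.e. justifying the Campbell/compensation computation for a \emph{path-dependent} functional $F(X_{[0,\beta(u))}^u,\beta(u))$ rather than a functional of the current state only. The issue is that \eqref{eq:evol} as stated applies to functions $f\in\bar{\mathcal{D}}(\mathcal{G})$ of the current trait; to handle the whole past trajectory one first proves the identity for cylinder functionals $F(\omega_{[0,r)},r)=f_1(\omega_{r_1})\cdots f_m(\omega_{r_m})h(r)$ by conditioning successively on $\mathcal{F}_{r_i}$ and using the Markov property together with the jump structure of $M$, and then extends to general non-negative measurable $F$ by a monotone-class argument (as already done in the proof of Theorem \ref{th:mto} and detailed in Appendix \ref{app:monotone}). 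One must also check integrability: the non-explosion estimate from Lemma \ref{lemma:nonexpl} (boundedness of $\E_{\delta_{x_0}}[N_s]$ on compacts) together with the growth bound on $B$ from Assumption \ref{assu:debut1}(1) and the moment bound $\E_{\delta_x}[\sum_{u\in V_s}|X_s^u|^\gamma]\le (1+|x|^\gamma)e^{A(T)s}$ control the integrand on compact time intervals, which legitimizes all the interchanges for non-negative $F$ via Tonelli even if the total integral is infinite.
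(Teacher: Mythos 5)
Your proof is correct and reaches the paper's key intermediate identity
\[
\E_{\delta_{x_0}}\Big[\sum_{u\in\mathcal{T}}F\big(X_{[0,\beta(u))}^u,\beta(u)\big)\Big]=\int_0^{+\infty}\E_{\delta_{x_0}}\Big[\sum_{u\in V_s}F\big(X_{[0,s)}^u,s\big)B\big(X_s^u\big)\Big]ds,
\]
but by a genuinely different mechanism. The paper works individual by individual: it conditions on the whole trajectory $(X_s^u)_{s\ge 0}$ and the birth time $\alpha(u)$, uses the explicit conditional density $B(X_t^u)\exp(-\int_{\alpha(u)}^{t}B(X_r^u)dr)\,dt$ of the death time $\beta(u)$, applies Fubini together with Assumption \ref{assu:debut1}(4) (so that the survival density integrates to one) and the identity $\{u\in V_s\}=\{\alpha(u)\le s<\beta(u),\,u\in\mathcal{T}\}$. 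You instead decompose the sum over $\mathcal{T}$ along the jump times of $\bar Z$, recognize it as the integral of a non-negative $\mathcal{F}_{s^-}$-measurable (predictable) functional against the Poisson point measure $M$ of \eqref{eq:evol}, and invoke the compensation (Campbell) formula; integrating out $z$ over $[0,B(X_{s^-}^u)]$ and $l,\theta$ over $[0,1]$ produces the weight $B$ directly. Both routes use Assumption \ref{assu:debut1}(4) for the same purpose, namely that every $u\in\mathcal{T}$ has $\beta(u)<\infty$ so no individual escapes the accounting. Your route avoids the explicit survival density at the price of a predictability check for the path-dependent integrand; note that this check does not require re-deriving \eqref{eq:evol} for path functionals, nor a preliminary cylinder-function/monotone-class step, since the compensation formula applies to any non-negative predictable integrand --- your worry on that point is over-caution rather than a gap. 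The final step (applying Theorem \ref{th:mto} at each fixed $s$ to $\omega\mapsto F(\omega_{[0,s)},s)B(\omega_s)$ and interchanging expectation and $ds$-integral by Tonelli) is exactly the paper's.
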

The left-hand side of \eqref{eq:mtoarbre} describes the dynamic of the trait of all individuals that were in the population. The right-hand side is the equivalent in terms of auxiliary process. Then, according to this result, the sum of the contributions of all individuals in the population is equal to the average of the auxiliary process with respect to the mean number of individuals in the population. The weight $B$ in the right-hand side comes from the density of the lifetimes. The terms might be infinite.
\begin{proof}
We follow \cite[Lemma 3.8]{cloez} and provide a proof for the whole trajectories. First, we recall that for any $u\in \mathcal{T}$ and any Borel set $A\subset \mathbb{R}_+$
\[
\mathbb{P}\left(\beta(u)\in A\left|\left(X_{s}^{u}\right)_{s\geq 0},\alpha(u) \right.\right)=\int_A B\left(X_{t}^{u}\right)\exp\left(-\int_{\alpha(u)}^{t}B\left(X_{s}^{u}\right)ds\right)dt.
\]
Then, for all non-negative measurable functions $f:\mathbb{D}(\mathbb{R}_+,\mathcal{X})\rightarrow \mathbb{R}_+$, we have
\begin{align*}
 & \E_{\delta_{x_0}}\left[\mathbf{1}_{\left\{ u\in\mathcal{T}\right\} }\int_{\alpha(u)}^{\beta(u)}F\left(X_{[0,s)}^{u},s\right)B\left(X_{s}^{u}\right)ds\right]\\
= & \E_{\delta_{x_0}}\Bigg[\mathbf{1}_{\left\{ u\in\mathcal{T}\right\} }\int_{\alpha(u)}^{+\infty}\left(\int_{\alpha(u)}^{\tau}F\left(X_{[0,s)}^{u},s\right)B\left(X_{s}^{u}\right)ds\right)\\
& \hspace{7cm}\times B\left(X_{\tau}^{u}\right)\exp\left(-\int_{\alpha(u)}^{\tau}B\left(X_{r}^{u}\right)dr\right)d\tau\Bigg].
\end{align*}
Next, using Fubini's Theorem, we obtain that the right-hand side of the above equation is equal to
\begin{align}\nonumber
&\E_{\delta_{x_0}}\Bigg[\mathbf{1}_{\left\{ u\in\mathcal{T}\right\} }\int_{\alpha(u)}^{+\infty}\left(\int_{s}^{+\infty}B\left(X_{\tau}^{u}\right)\exp\left(-\int_{\alpha(u)}^{\tau}B\left(X_{r}^{u}\right)dr\right)d\tau\right)\\\nonumber
&\hspace{7cm}\times F\left(X_{[0,s)}^{u},s\right)B\left(X_{s}^{u}\right)ds\Bigg]\\\nonumber
= & \E_{\delta_{x_0}}\left[\mathbf{1}_{\left\{ u\in\mathcal{T}\right\} }\int_{\alpha(u)}^{+\infty}\exp\left(-\int_{\alpha(u)}^{s}B\left(X_{r}^{u}\right)dr\right)F\left(X_{[0,s)}^{u},s\right)B\left(X_{s}^{u}\right)ds\right]\\\label{thewohle1}
= & \E_{\delta_{x_0}}\left[\mathbf{1}_{\left\{ u\in\mathcal{T}\right\} }F\left(X_{[0,\beta(u))}^{u},\beta(u)\right)\right],
\end{align}
where the first equality is comes from of Assumption \ref{assu:debut1}(4).
But $$\left\{ \alpha(u)\leq s<\beta(u),u\in\mathcal{T}\right\} =\left\{ u\in V_{s}\right\}, $$
then,
\begin{align}\nonumber
\E_{\delta_{x_0}}&\left[\mathbf{1}_{\left\{ u\in\mathcal{T}\right\} }\int_{\alpha(u)}^{\beta(u)}F\left(X_{[0,s)}^{u},s\right)B\left(X_{s}^{u}\right)ds\right] \\\label{eq:thewohle2}
& =\E_{\delta_{x_0}}\left[\int_{0}^{+\infty}\mathbf{1}_{\left\{ u\in V_{s}\right\} }F\left(X_{[0,s)}^{u},s\right)B\left(X_{s}^{u}\right)ds\right].
\end{align}
Finally combining \eqref{thewohle1} and \eqref{eq:thewohle2} we get
\begin{align*}
\E_{\delta_{x_0}}\left[\sum_{u\in\mathcal{T}}f\left(X_{[0,\beta(u))}^{u},\beta(u)\right)\right] & =\sum_{u\in\mathcal{U}}\E_{\delta_{x_0}}\left[\mathbf{1}_{\left\{ u\in\mathcal{T}\right\} }F\left(X_{[0,\beta(u))}^{u},\beta(u)\right)\right]\\
 & =\sum_{u\in\mathcal{U}}\E_{\delta_{x_0}}\left[\int_{0}^{+\infty}\mathbf{1}_{\left\{ u\in V_{s}\right\} }F\left(X_{[0,s)}^{u},s\right)B\left(X_{s}^{u}\right)ds\right]\\
 & =\int_{0}^{+\infty}\E_{\delta_{x_0}}\left[\sum_{u\in V_{s}}F\left(X_{[0,s)}^{u},s\right)B\left(X_{s}^{u}\right)\right]ds\\
 & =\int_{0}^{+\infty}m(x_0,0,s)\E_{x_0}\left[F\left(Y_{[0,s)}^{(s)},s\right)B\left(Y_{s}^{(s)}\right)\right]ds,
\end{align*}
where the last equality comes from the Many-to-One formula \eqref{mtomesurable}. 
\end{proof}

\subsection{Many-to-One formulas for forks\label{sub:mtoforks}}
In this section, we characterize the law of a couple of lineage coming from two individuals alive at time $t$. For former results on the subject, we refer to \cite{bansaye2011limit} for such formulas in the neutral case and to \cite{harris2015many} for many-to-few-formulas on weighted $k$-fold sums over particles in the case of local branching. We aim at characterizing the dynamic of the trait of a couple of individual along the spine using our auxiliary process. Those formulas have already proved useful to control the variance of estimators \cite{hoffmann2015nonparametric}.

For any two functions $f,g$, defined respectively on two intervals $I_f$, $I_g$, for any $[a,b)\subset I_f$,  $[c,d)\subset I_g$, we define the concatenation $[f_{[a,b)},g_{[c,d)}]$ by
\[
[f_{[a,b)},g_{[c,d)}](t)=\left\{\begin{array}{ll}
f(t), & \text{ if }t\in [a,b),\\
g(t+c-b), & \text{ if }t\in [b,b+(d-c)).\\
\end{array}\right.
\]
\begin{prop}\label{prop:mtoforksgene}
Under Assumptions \ref{assu:debut1}, \ref{assu:debut2}, \ref{assu:doeblin} and \ref{assu:differentiable}, for any $t\geq 0$ and $x_0\in \mathcal{X}$, for any non-negative measurable function $F:\mathbb{D}([0,t],\mathcal{X})^2\rightarrow\mathbb{R}_+$,
\begin{align}\label{eq:mtoforksgene}
\E_{\delta_{x_0}}\left[\sum_{\begin{subarray}{c}
u,v\in V_{t}\\
u\neq v
\end{subarray}}F\left(X_{[0,t]}^{u},X_{[0,t]}^{v}\right)\right]=\int_0^t m(x_0,0,s)\mathbb{E}_{x_0}\left[B\left(Y_s^{(s)}\right)J_{s,t}F\left(Y_{[0,s]}^{(s)}\right)\right]ds,
\end{align}
where for $(x_r,r\leq s)\in\mathbb{D}([0,s],\mathcal{X})$,
\begin{multline*}
J_{s,t}F(x)=\sum_{a\neq b\in\mathbb{N}}\sum_{k\geq \max(a,b)}p_k\left(x_s\right)\int_0^1 m\left(F_a^{(k)}\left(x_s,\theta\right),s,t\right)m\left(F_b^{(k)}\left(x_s,\theta\right),s,t\right)\\
H_{s,t}F\left(x,F_a^{(k)}\left(x_s,\theta\right),F_b^{(k)}\left(x_s,\theta\right)\right)d\theta,
\end{multline*}
and for all $s\leq t$ , $\left(x_s,s\leq t\right)\in\mathbb{D}\left([0,t],\mathcal{X}\right)$ and $y_1,y_2\in\mathcal{X}$,
\begin{align*}
H_{s,t}F(x,y_1,y_2)=\mathbb{E}\left[F\left([x_{[0,s)};Y_{[s,t]}^{(t),1}],[x_{[0,s)};Y_{[s,t]}^{(t),2}]\right)\middle|\left(Y_s^{(t),1},Y_s^{(t),2}\right)=\left(y_1,y_2\right)\right],
\end{align*}
and $(Y^{(t),1}_s,\ s\leq t),(Y^{(t),2}_s,\ s\leq t)$ are two independent copies of $(Y^{(t)}_s,\ s\leq t)$. 
\end{prop}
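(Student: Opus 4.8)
The plan is to split the double sum over distinct pairs $(u,v)$ of individuals alive at time $t$ according to their most recent common ancestor (MRCA), to invoke the branching property so that the two lines of descent issued from the MRCA become independent copies of the process, to apply the Many-to-One formula of Theorem \ref{th:mto} to each of them, and finally to recognise the remaining sum over the MRCA as an instance of the tree Many-to-One formula of Proposition \ref{prop:mtotree}.

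First I would observe that for distinct $u,v\in V_t$ there is a unique $w\in\mathcal{T}$ and a unique ordered pair $a\neq b$ with $wa\leq u$ and $wb\leq v$: this $w$ is the MRCA of $u$ and $v$, and $\beta(w)<t$ (the event $\{\beta(w)=t\}$ being $\mathbb{P}$-negligible). Summing over all labels $w\in\mathcal{U}$ with the indicator $\mathbf{1}_{\{w\in\mathcal{T},\,\beta(w)<t\}}$ (exactly as in the proof of Proposition \ref{prop:mtotree}), then over $a\neq b$ with $a,b\leq K_w$, and writing the trajectory of $u$ (resp. $v$) as the concatenation of the trajectory of $w$ on $[0,\beta(w))$ with the trajectory inside the subtree rooted at $wa$ (resp. $wb$) on $[\beta(w),t]$, one reduces the left-hand side of \eqref{eq:mtoforksgene} to the expectation of a triple sum; all exchanges of summation are legitimate since $F\geq 0$.

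Next, for a fixed label $w$, $\beta(w)$ is a stopping time of $(\mathcal{F}_s)$ and, conditionally on $\mathcal{F}_{\beta(w)}$, the sub-populations issued from two distinct children $wa$ and $wb$ are independent copies of the branching process started at time $\beta(w)$ from the single traits $F_a^{(K_w)}(X^w_{\beta(w)},\theta)$ and $F_b^{(K_w)}(X^w_{\beta(w)},\theta)$, where $\theta$ is the uniform variable carried by the division of $w$ and $X^w_{\beta(w)}=X^w_{\beta(w)^-}$ since the trait of $w$ alone a.s.\ does not jump at $\beta(w)$. Since the division rate, the offspring law and the trait dynamics carry no explicit dependence on the wall-clock time, the semi-group identity \eqref{eq:semigroup_auxi} and Theorem \ref{th:mto} hold verbatim with the initial time $\beta(w)$ in place of $0$; applying this on $[\beta(w),t]$ to each subtree and using their conditional independence turns the inner double sum, given $\mathcal{F}_{\beta(w)}$, into
\[
\sum_{\substack{a\neq b\\ a,b\leq K_w}} m\!\big(y^w_a,\beta(w),t\big)\,m\!\big(y^w_b,\beta(w),t\big)\,H_{\beta(w),t}F\big(X^w_{[0,\beta(w)]},\,y^w_a,\,y^w_b\big),
\]
with $y^w_i=F_i^{(K_w)}(X^w_{\beta(w)},\theta)$ and $H_{s,t}$ as in the statement. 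Conditioning once more on $\sigma\big(X^w_{[0,\beta(w)]},\beta(w)\big)$ and averaging over $K_w\sim(p_k(X^w_{\beta(w)}))_k$ and the independent uniform $\theta$ produces exactly $J_{\beta(w),t}F\big(X^w_{[0,\beta(w)]}\big)$.

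At this point the left-hand side has become $\E_{\delta_{x_0}}\big[\sum_{w\in\mathcal{T}}\mathbf{1}_{\{\beta(w)<t\}}\,J_{\beta(w),t}F(X^w_{[0,\beta(w)]})\big]$, which is of the form $\E_{\delta_{x_0}}\big[\sum_{w\in\mathcal{T}}G(X^w_{[0,\beta(w))},\beta(w))\big]$ for $G(\mathbf{x},s)=\mathbf{1}_{\{s<t\}}J_{s,t}F(\mathbf{x})$ (using that the value of a trajectory at $\beta(w)$ is the left limit, hence determined by its restriction to $[0,\beta(w))$). Proposition \ref{prop:mtotree} then yields $\int_0^{+\infty}m(x_0,0,s)\,\E_{x_0}\big[\mathbf{1}_{\{s<t\}}J_{s,t}F(Y^{(s)}_{[0,s]})\,B(Y^{(s)}_s)\big]\,ds$, which is \eqref{eq:mtoforksgene} after restricting the integral to $[0,t]$. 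I expect the delicate point to be the previous step: making precise the branching property at the division time of a prescribed individual — so that the two lines of descent are genuine independent copies of the whole process — and checking that Theorem \ref{th:mto} really does transfer to a strictly positive initial time, which is where the time-homogeneity of the measure-valued dynamics enters. As for Proposition \ref{prop:mtotree}, everything is carried out for non-negative $F$, both sides being allowed to be infinite.
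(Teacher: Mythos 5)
Your proposal follows essentially the same route as the paper: decompose the sum over pairs by their most recent common ancestor $w$, use the branching and Markov properties at $\beta(w)$ to decouple the two subtrees, apply the Many-to-One formula of Theorem \ref{th:mto} on $[\beta(w),t]$ to each line of descent, average over the offspring number and the uniform variable to produce $J_{\beta(w),t}F$, and conclude with the whole-tree formula of Proposition \ref{prop:mtotree}. The identification of the terms $m(\cdot,0,s)$, $m(F_a^{(k)},s,t)m(F_b^{(k)},s,t)$ and $H_{s,t}F$ matches the paper's computation.

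One step needs more care than you give it. You work with a general non-negative measurable $F$ throughout and assert that "applying \eqref{mtomesurable} on $[\beta(w),t]$ to each subtree and using their conditional independence" turns the inner double sum into $m(y_a,\beta(w),t)\,m(y_b,\beta(w),t)\,H_{\beta(w),t}F(\cdot,y_a,y_b)$. Theorem \ref{th:mto} is a statement about a single sum $\sum_{u\in V_t}F(X^u)$; when $F$ couples the two trajectories it cannot be applied "to each subtree" separately. What you actually need is the product identity
\begin{align*}
\E\Bigl[\sum_{u_1\in V_t^1}\sum_{u_2\in V_t^2}F\bigl(X^{u_1},X^{u_2}\bigr)\Bigr]
=m(y_1,s,t)\,m(y_2,s,t)\,\E\Bigl[F\bigl(Y^{(t),1},Y^{(t),2}\bigr)\Bigr]
\end{align*}
for two independent populations started from $\delta_{y_1}$ and $\delta_{y_2}$ at time $s$. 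This follows from Theorem \ref{th:mto} applied twice when $F=f_1\otimes f_2$, and extends to general $F$ only via a monotone-class argument on $\mathbb{D}([0,t],\mathcal{X})^2$. The paper sidesteps this by first proving \eqref{eq:mtoforksgene} for $F=f_1\otimes f_2$ (so that each factor is handled by the one-dimensional formula) and deferring the monotone-class extension to the very end; your version merely relocates that extension to the middle of the argument, but you must state it, otherwise the key step is not justified. With that addition (and the routine remark that $\{\beta(w)=t\}$ is negligible, which you do make), the proof is complete.
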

\begin{figure}
\centering
\def\svgwidth{0.45\textwidth}
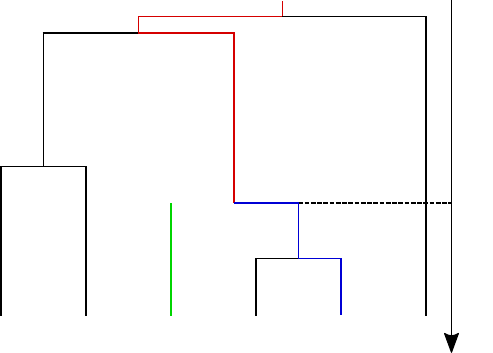
\caption{Forks.}
\label{fig:forks1}
\end{figure}
According to this proposition, the sum of the contributions of each couple in the population at time $t$ corresponds to an integral of a product of a count term and a term characterizing the dynamic of the traits of the couple. The integral is over all the possible death time $s\in [0,t]$ for the most recent common ancestor $w$ of $u$ and $v$, where $u\neq v\in V_t$. For example, in the case of Figure \ref{fig:forks1}, if we pick the green star and the blue star, the lineage of their most recent common ancestor $w$ is in red. For the count term, $m(x_0,0,s)$ corresponds to the choice $w$ among the individuals in the population at time $s$ and $m\left(F_a^{(k)}\left(x_s,\theta\right),s,t\right)m\left(F_b^{(k)}\left(x_s,\theta\right),s,t\right)$ corresponds to the choice of $u$ and $v$ among the descendants of $w$. In the example, with our choice of $w$, there is only one choice for $u$ and two for $v$. Before $s$, the traits along the ancestral lineage of $u$ and $v$ are identical. After the death of $w$, the dynamic of the trait of the ancestor of $u$ and the ancestor of $v$ become independent conditionally to the trait of $w$ at death. This explains the term $H_{s,t}F\left(x_{[0,s)},F_a^{(k)}\left(x_s,\theta\right),F_b^{(k)}\left(x_s,\theta\right)\right)$ above and, in the right
hand side of \eqref{eq:mtoforksgene}, $s$ represents the time of the most recent common ancestor
with $x_{[0,s)},\ Y^{(t),1}_{[s,t]}$ and $Y^{(t),2}_{[s,t]}$ describing the dynamics of the trait along the red, green and blue path respectively.

This formula is similar to the Many-to-Two formula proved in \cite{harris2015many} but as in the Many-to-One formula \eqref{mtomesurable}, the count terms are separated from the terms corresponding to the dynamic of the trait of a "typical" individual contrary to the formula in \cite{harris2015many}. This decomposition is useful for the study of the asymptotic behavior of the branching process. We refer the reader the \cite{marguet2017law} for an example of use of this formula to prove a law of large numbers.
\begin{proof}
Let $t\geq 0$ and $x_0\in\mathcal{X}$. First we prove \eqref{eq:mtoforksgene} for $F(x,y)=f_1(x)f_2(y)$, where $f_i:\mathbb{D}([0,t],\mathcal{X})\rightarrow \mathbb{R}_+$ are non-negative measurable functions for $i=1,2$. Let us denote by $A$ the left-hand side of \eqref{eq:mtoforksgene}.  We explicit the most recent common ancestor $w$ of two individuals $u,v$ living at time $t$ and we obtain
\begin{multline*}
A=\E_{\delta_{x_0}}\Bigg[\sum_{w\in\mathcal{U}}\sum_{a_1\neq a_2\in\mathbb{N}}\sum_{\widetilde{u}_1,\widetilde{u}_2\in\mathcal{T}}\mathbf{1}_{\left\{ t\geq\beta(w)\right\} }\mathbf{1}_{\left\{wa_1\widetilde{u}_1\in V_t,\ wa_2\widetilde{u}_2\in V_t\right\}}\\
\times\prod_{i=1,2}f_i\left(\left[X_{[0,\beta(w))}^{w};X_{[\beta(w),t]}^{wa_i\widetilde{u}_i}\right]\right)\Bigg]\\
=\E_{\delta_{x_0}}\left[\sum_{\begin{subarray}{c} wa_1\neq wa_2\in\mathcal{\mathcal{T}}\\ a_1,a_2\in\mathbb{N}\end{subarray}}\mathbf{1}_{\left\{ t\geq\beta(w)\right\} }\E\left[\prod_{i=1,2}\sum_{u_i\in V_{t},u_i\geq wa_i}f_i\left(\left[X_{[0,\beta(w))}^{w};X_{[\beta(w),t]}^{u_i}\right]\right)\middle| \mathcal{F}_{\beta(w)}\right]\right].
\end{multline*}
Then, applying successively the branching property and the Markov property, we have
\begin{multline*}
A = \E_{\delta_{x_0}}\Bigg[\sum_{\begin{subarray}{c} wa_1\neq wa_2\in\mathcal{\mathcal{T}}\\ a_1,a_2\in\mathbb{N}\end{subarray}}\mathbf{1}_{\left\{ t\geq\beta(w)\right\} }\prod_{i=1,2}\E\Bigg[\sum_{\begin{subarray}{c} u_i\in V_{t}\\ u_i\geq wa_i\end{subarray}}f_i\left(\left[X_{[0,\beta(w))}^{w};X_{[\beta(w),t]}^{u_i}\right]\right)\Bigg|X_{[0,\beta(w)]}^{wa_i}\Bigg]\Bigg]\\
= \E_{\delta_{x_0}}\Bigg[\sum_{\begin{subarray}{c} wa_1\neq wa_2\in\mathcal{\mathcal{T}}\\ a_1,a_2\in\mathbb{N}\end{subarray}}\mathbf{1}_{\left\{ t\geq\beta(w)\right\} }\prod_{i=1,2}\E\Bigg[\sum_{\begin{subarray}{c} u_i\in V_{t}\\ u_i\geq wa_i\end{subarray}}f_i\left(\left[\widetilde{x};X_{[\beta(w),t]}^{u_i}\right]\right)\Bigg|X_{\beta(w)}^{wa_i}\Bigg]_{\widetilde{x}=X_{[0,\beta(w))}^{w}}\Bigg].
\end{multline*}
Next, we use the Many-to-One formula \eqref{mtomesurable} and we explicit the distribution of the trait at birth of $wa$ and $wb$:

\begin{align*}
&\E_{\delta_{x_0}}\Bigg[\sum_{\begin{subarray}{c} wa_1\neq wa_2\in\mathcal{\mathcal{T}}\\ a_1,a_2\in\mathbb{N}\end{subarray}}\mathbf{1}_{\left\{ t\geq\beta(w)\right\} }\prod_{i=1,2}m\left(X_{\beta(w)}^{wa_i},\beta(w),t\right) \\
&\hspace{4cm}\times\E\left[f_i\left(\left[\widetilde{x};Y_{[\beta(w),t]}^{(t)}\right]\right)\Big|Y_{\beta(w)}^{(t)}=X_{\beta(w)}^{wa_i}\right]_{\widetilde{x}=X_{[0,\beta(w))}^{w}}\Bigg]\\
& = \E_{\delta_{x_0}}\Bigg[\sum_{\begin{subarray}{c} wa_1\neq wa_2\in\mathcal{\mathcal{T}}\\ a_1,a_2\in\mathbb{N}\end{subarray}}\mathbf{1}_{\left\{ t\geq\beta(w)\right\} }\sum_{k\geq a_1\vee a_2}p_{k}\left(X_{\beta(w)}^{w}\right)\\
&\hspace{2cm}\times \int_{0}^{1}\prod_{i=1,2}m\left(F_{a_i}^{(k)}\left(X_{\beta(w)}^{w},\theta\right),\beta(w),t\right) \\
&\hspace{3cm}\times\E\left[f_i\left(\left[\widetilde{x};Y_{[\beta(w),t]}^{(t)}\right]\right)\Big|Y_{\beta(w)}^{(t)}=F_{a_i}^{(k)}\left(X_{\beta(w)^-}^{w},\theta\right)\right]_{\widetilde{x}=X_{[0,\beta(w))}^{w}}d\theta\Bigg].
 \end{align*}
Applying the Many-to-One formula over the whole tree \eqref{eq:mtoarbre} yields
\begin{align*}
A & =  \int_{0}^{t}m(x_0,0,s)\E_{x}\Bigg[B\left(Y_{s}^{(s)}\right)\sum_{a_1\neq a_2\in\mathbb{N}}\sum_{k\geq a_1\vee a_2}p_{k}\left(Y_{s}^{(s)}\right)\\
&\hspace{3cm}\times \int_{0}^{1}\prod_{i=1,2}m\left(F_{a_i}^{(k)}\left(Y_{s}^{(s)},\theta\right),s,t\right)\\
&\hspace{4cm}\times\E\left[f_i\left(\left[\widetilde{x};Y_{[s,t]}^{(t)}\right]\right)\Big|Y_{s}^{(t)}=F_{a_i}^{(k)}\left(Y_s^{(s)},\theta\right)\right]_{|\widetilde{x}=Y_{[0,s)}^{(s)}}d\theta\Bigg]ds\\
& =  \int_{0}^{t}m(x_0,0,s)\E_{x_0}\Bigg[B\left(Y_{s}^{(s)}\right) J_{s,t}\left(f_1\otimes f_2\right)\left(Y_{[0,s)}^{(s)}\right)\Bigg]ds,
\end{align*}
where $f_1\otimes f_2(x)=f_1(x)f_2(x)$.
Finally, we obtain \eqref{eq:mtoforksgene} using a monotone class argument. 
\end{proof}
Let us explicit a particular case of formula \eqref{eq:mtoforksgene}. We define
\begin{equation}
J_{2}(f,g)(x)=\sum_{a\neq b}\sum_{k\geq\max(a,b)}p_{k}\left(x\right)\int_{0}^{1}f\left(F_{a}^{(k)}\left(x,\theta\right)\right)g\left(F_{b}^{(k)}\left(x,\theta\right)\right)d\theta.\label{eq:J2}
\end{equation}
$J_{2}$ represents the average trait at birth of two uniformly chosen children
from an individual of type $x$. For simplicity of notation, we write $J_2f(x)$ instead of $J_2(f,f)(x)$. Let us recall that
\begin{align*}
P_{r,s}^{(t)}f(x)=\mathbb{E}\left[f\left(Y_s^{(t)}\right)\middle|Y_r^{(t)}=x\right].
\end{align*}
\begin{cor}
\label{mtoforksst}Under Assumptions \ref{assu:debut1},\ref{assu:debut2}, \ref{assu:doeblin} and \ref{assu:differentiable}, for any non-negative measurable functions $f_t,g_t$ from $\mathcal{X}\times\mathbb{R}^{+}$ to $\mathbb{R}$ and any $x_0\in\mathcal{X}$
we have for $s\leq t$,
\begin{align}\nonumber
&\E_{\delta_{x_0}}\left[\sum_{\begin{subarray}{c}
u,v\in V_{t}\\
u\neq v
\end{subarray}}f_{t}\left(X_{s}^{u}\right)g_{t}\left(X_{s}^{v}\right)\right]\\\nonumber
&= \int_{s}^{t}m(x_0,0,r) \E_{x_0}\left[f_{t}\otimes g_{t}\left(Y_{s}^{(r)}\right)B\otimes J_{2}m\left(\cdot,r,t\right)\left(Y_{r}^{(r)}\right)\right]dr\\\label{eq:mtoforksplus}
&\hspace{1cm}+\int_{0}^{s}m(x_0,0,r)\E_{x_0}\left[B\otimes J_{2}\left(m(\cdot,r,t)P_{r,s}^{(t)}f_{t},m(\cdot,r,t)P_{r,s}^{(t)}g_{t}\right)\left(Y_{r}^{(r)}\right)\right]dr.
\end{align}
\end{cor}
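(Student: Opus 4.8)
The plan is to read off \eqref{eq:mtoforksplus} as a specialization of Proposition~\ref{prop:mtoforksgene}. Fix $s\le t$ and apply \eqref{eq:mtoforksgene} to the product-type (hence admissible) functional $F:\mathbb{D}([0,t],\mathcal{X})^2\to\mathbb{R}_+$ defined by $F(\xi,\eta)=f_t(\xi_s)g_t(\eta_s)$, i.e.\ the functional that only inspects the two trajectories at the single time $s$. Renaming the death time of the most recent common ancestor $r$ so as not to clash with the fixed time $s$, formula \eqref{eq:mtoforksgene} reads
\begin{align*}
\E_{\delta_{x_0}}\left[\sum_{\substack{u,v\in V_t\\ u\neq v}}f_t(X_s^u)g_t(X_s^v)\right]=\int_0^t m(x_0,0,r)\,\E_{x_0}\left[B\big(Y_r^{(r)}\big)J_{r,t}F\big(Y_{[0,r]}^{(r)}\big)\right]dr,
\end{align*}
so that everything comes down to evaluating $J_{r,t}F$, which I would compute separately on $\{r\ge s\}$ and $\{r<s\}$ and then split the $r$-integral accordingly.

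On the range $r\ge s$, for any path $x$ on $[0,r]$ the concatenated trajectories $[x_{[0,r)};Y^{(t),i}_{[r,t]}]$ coincide with $x$ at the time $s<r$; hence $H_{r,t}F(x,y_1,y_2)=f_t(x_s)g_t(x_s)$ is independent of $(y_1,y_2)$. Pulling this constant factor out of the definition of $J_{r,t}F$ and recognizing the remaining sum over $(a,b,k)$ as $J_2m(\cdot,r,t)(x_r)$ --- with $J_2$ as in \eqref{eq:J2} and both slots equal to $m(\cdot,r,t)$ --- gives $J_{r,t}F(x)=f_t\otimes g_t(x_s)\,J_2m(\cdot,r,t)(x_r)$. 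Multiplying by $B\big(Y_r^{(r)}\big)$ (absorbed as $B\otimes J_2m(\cdot,r,t)$) and integrating over $r\in[s,t]$ produces exactly the first integral in \eqref{eq:mtoforksplus}.

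On the range $r<s$, the trajectory $[x_{[0,r)};Y^{(t),i}_{[r,t]}]$ reads $Y^{(t),i}_s$ at time $s$, so $H_{r,t}F(x,y_1,y_2)=\E\big[f_t(Y^{(t),1}_s)g_t(Y^{(t),2}_s)\mid (Y^{(t),1}_r,Y^{(t),2}_r)=(y_1,y_2)\big]$, which by independence of the two copies $Y^{(t),1},Y^{(t),2}$ factorizes as $P_{r,s}^{(t)}f_t(y_1)\,P_{r,s}^{(t)}g_t(y_2)$. Substituting $y_i=F_{a_i}^{(k)}(x_r,\theta)$ in $J_{r,t}F$ and absorbing each factor $m(\cdot,r,t)$ into the corresponding $P_{r,s}^{(t)}f_t$, resp.\ $P_{r,s}^{(t)}g_t$, yields $J_{r,t}F(x)=J_2\big(m(\cdot,r,t)P_{r,s}^{(t)}f_t,\,m(\cdot,r,t)P_{r,s}^{(t)}g_t\big)(x_r)$; multiplying by $B\big(Y_r^{(r)}\big)$ and integrating over $r\in[0,s]$ gives the second integral in \eqref{eq:mtoforksplus}. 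Adding the two contributions finishes the proof. I do not expect a genuine obstacle: the only structural inputs are the case distinction at $r=s$ and the factorization of $H_{r,t}$ through the independence of the two spine copies, while turning $J_{r,t}F$ into $J_2$ is a routine rearrangement of the sums.
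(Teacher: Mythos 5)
Your proof is correct and is exactly the intended argument: the paper presents Corollary \ref{mtoforksst} as "a particular case of formula \eqref{eq:mtoforksgene}" without writing out the details, and your specialization to $F(\xi,\eta)=f_t(\xi_s)g_t(\eta_s)$, with the case split at $r=s$ and the factorization of $H_{r,t}F$ via the independence of the two spine copies, supplies precisely those details.
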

The first integral corresponds to the couple of individuals alive at time $t$ whose most recent common ancestor died after time $s$. It is for example the case on Figure \ref{fig:forks2} if you pick two red stars on the tree on the left-hand side. The product $m(x_0,0,r)J_2m(\cdot,r,t)(y)$, with $y\in\mathcal{X}$, corresponds to the average number of such couples at time $t$ whose most recent common ancestor died at time $r$ with $s\leq r\leq t$. The second integral corresponds to couples $(u,v)\in V_t$ of individuals whose most recent common ancestor $w$ died before $s$. It is the case on Figure \ref{fig:forks2} if you pick one blue star and one green star on the tree on the right-hand side. In this case, unlike in the previous one, the value of the trait of the individuals at time $s$ is not the same. The dynamic of the trait on the blue lineage and on the green lineage are independent conditionally to the trait of their common ancestor at death. This explains the terms $P_{r,s}^{(t)}f_t$ and $P_{r,s}^{(t)}g_t$ that appear in the second integral. As before, the remaining terms depending on the average number of individuals in the population are count terms.
\begin{figure}
\centering
\def\svgwidth{0.45\textwidth}
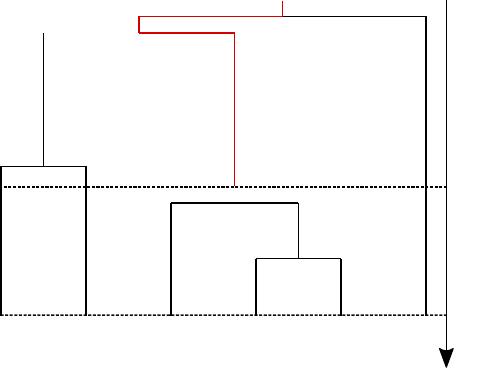
\def\svgwidth{0.45\textwidth}
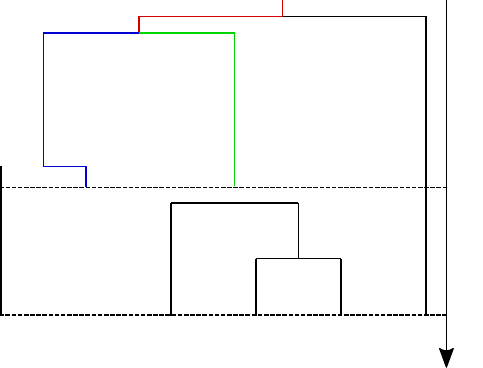
\caption{Trees and forks.}
\label{fig:forks2}
\end{figure}

\section{Ancestral lineage of a uniform sampling at a fixed time with a large initial population}\label{infiniteparticle}\label{sec:large}
The Many-to-One formula \eqref{mtomesurable} gives us the law of the trait of a uniformly sampled individual in an "average" population. But the characterization of the law of the trait of a uniformly sampled individual in the effective population is more complex because the number of individuals alive at time $t$ is stochastic and depends on the dynamic of the trait of individuals. As the auxiliary process takes into account the bias in the population due to the number of individuals, it characterizes the law of a uniformly sampled individual only when the bias are in place i.e. when there are a certain amount of individuals. Indeed, the dynamic of the first individual in the population is not biased. That is why we now look at the ancestral lineage of a uniform sampling with a large initial population.
\subsection{Convergence of the sampling process on a fixed time interval}

It only makes sense to speak of a uniformly sampled individual at time $t$ if the population does not become extinct before time $t$. For all $t\geq 0$, let $\Omega_t=\left\lbrace N_t>0\right\rbrace$ denote the event of survival of the population. Let $\nu\in\mathcal{M}_P(\mathcal{X})$ be such that
\begin{align}\label{nonextinct}
\mathbb{P}_{\nu}(\Omega_t)>0.
\end{align}
We set
\begin{align}\label{eq:nun}
\nu_n:=\sum_{i=1}^n\delta_{X_i},
\end{align}
where $X_i$ are i.i.d. random variables with distribution $\nu$.
For $t\geq 0$, we denote by $U(t)$ the random variable with uniform distribution on $V_t$ conditionally on $\Omega_t$ and by $\left(X^{U(t)}_s,\ s\leq t\right)$ the process describing the trait of a sampling along its ancestral lineage. 
If $X$ is a stochastic process, we denote by $X^{\nu}$ the process with initial distribution $\nu\in\mathcal{M}_P(\mathcal{X})$. In particular, for all $t\geq 0$, $Y^{(t),\nu}$ corresponds to the auxiliary process with $Y_0^{(t)}\sim \nu$. For all $0\leq s\leq t$,
\[m(\nu,s,t)=\E\left(N_{t}| Z_{s}=\nu\right),\]
denote the average number of individuals in the population after time $t$ starting from a population distributed as $\nu$ at time $s$. 
\begin{thm}\label{th:grdpop} Under Assumptions \ref{assu:debut1}(1-3),\ref{assu:debut2}, \ref{assu:doeblin} and \ref{assu:differentiable}, for any $t\geq 0$, the sequence $\left(X_{[0,t]}^{U(t),\nu_n}\right)_{n\geq 0}$ converges in law in $\mathbb{D}\left([0,t],\mathcal{X}\right)$ to $Y_{[0,t]}^{(t),\pi_t}$ where
\begin{align*}
\pi_t(dx)=\frac{m(x,0,t)\nu(dx)}{m(\nu,0,t)}.
\end{align*}
\end{thm}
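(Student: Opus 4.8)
The plan is to establish the convergence in law directly from the definition: it suffices to show that, for every bounded continuous $F:\mathbb{D}([0,t],\mathcal{X})\to\mathbb{R}$ (in fact the argument works for every bounded measurable $F$), one has
\[
\E_{\nu_n}\!\left[F\!\left(X_{[0,t]}^{U(t)}\right)\,\big|\,\Omega_t\right]\ \xrightarrow[n\to\infty]{}\ \E_{\pi_t}\!\left[F\!\left(Y_{[0,t]}^{(t)}\right)\right],
\]
so that no separate tightness step is needed. The proof then rests on two ingredients only: the branching property, which turns the uniform sample into an empirical average over $n$ i.i.d.\ subpopulations, and the strong law of large numbers; the Many-to-One formula \eqref{mtomesurable} is used only to identify the limiting constants.

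I would work on a single probability space carrying an i.i.d.\ sequence $(X_i)_{i\ge1}$ of law $\nu$ together with independent branching mechanisms, so that the subpopulation descending from the $i$th founder is an independent copy of the process started from $\delta_{X_i}$. Writing $N_t^{(i)}=\#V_t^{(i)}$ and $S^{(i)}=\sum_{u\in V_t^{(i)}}F\!\left(X_{[0,t]}^u\right)$, the pairs $(N_t^{(i)},S^{(i)})_{i\ge1}$ are i.i.d.; under $\mathbb{P}_{\nu_n}$ one has $N_t=\sum_{i=1}^n N_t^{(i)}$, and conditionally on $\mathcal{F}_t$ and on $\Omega_t$ the sampled individual $U(t)$ is uniform over $V_t$. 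Hence
\begin{equation}\label{eq:plansamplingid}
\E_{\nu_n}\!\left[F\!\left(X_{[0,t]}^{U(t)}\right)\,\big|\,\Omega_t\right]=\frac{1}{\mathbb{P}_{\nu_n}(\Omega_t)}\,\E\!\left[\mathbf{1}_{\{N_t>0\}}\,\frac{n^{-1}\sum_{i=1}^n S^{(i)}}{n^{-1}\sum_{i=1}^n N_t^{(i)}}\right].
\end{equation}
By the Many-to-One formula \eqref{mtomesurable}, $\E[S^{(i)}]=\int_{\mathcal{X}}m(x,0,t)\,\E_x[F(Y_{[0,t]}^{(t)})]\,\nu(dx)$ and $\E[N_t^{(i)}]=\int_{\mathcal{X}}m(x,0,t)\,\nu(dx)=:m(\nu,0,t)$; since $F$ is bounded, $|S^{(i)}|\le\|F\|_\infty N_t^{(i)}$, and $m(\nu,0,t)<\infty$ by the bound $m(x,0,t)\le(1+|x|^\gamma)e^{A(t)t}$ of Lemma \ref{lemma:nonexpl}.

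The strong law of large numbers then yields, almost surely,
\[
n^{-1}\sum_{i=1}^n N_t^{(i)}\to m(\nu,0,t)>0,\qquad n^{-1}\sum_{i=1}^n S^{(i)}\to \int_{\mathcal{X}}m(x,0,t)\,\E_x\!\left[F\!\left(Y_{[0,t]}^{(t)}\right)\right]\nu(dx),
\]
the strict positivity coming from $m(\nu,0,t)\ge\mathbb{P}_\nu(\Omega_t)>0$ by \eqref{nonextinct}. Therefore the ratio in \eqref{eq:plansamplingid} converges a.s.\ to $\E_{\pi_t}[F(Y_{[0,t]}^{(t)})]$ with $\pi_t(dx)=m(x,0,t)\nu(dx)/m(\nu,0,t)$. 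Moreover, by independence of the founders and the i.i.d.\ property of the $X_i$, $\mathbb{P}_{\nu_n}(\Omega_t)=1-(1-\mathbb{P}_\nu(\Omega_t))^n\to1$, and since $\{N_t=0\}$ is decreasing in $n$ one has $\mathbf{1}_{\{N_t>0\}}\to1$ a.s.; as the ratio is bounded by $\|F\|_\infty$ on $\{N_t>0\}$, dominated convergence in the numerator of \eqref{eq:plansamplingid} gives the claim and hence the convergence in law.

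The only genuinely delicate point is the integrability required to run the law of large numbers, namely $m(\nu,0,t)=\E_\nu[N_t]<\infty$: this holds under \eqref{nonextinct} together with $\int_{\mathcal{X}}|x|^\gamma\nu(dx)<\infty$ (via the bound of Lemma \ref{lemma:nonexpl}), and removing this would require truncating $F$ and the subpopulation sizes and controlling the resulting error uniformly in $n$. A secondary, routine point is the measurability in $x$ of $x\mapsto\E_x[F(Y_{[0,t]}^{(t)})]$ and of the counting functionals involved, which follows from \eqref{mtomesurable} since $m(\cdot,0,t)$ is measurable and strictly positive. Everything else --- the branching decomposition \eqref{eq:plansamplingid}, the identification of the limit via Many-to-One, and the reduction of weak convergence to a single family of test functions --- is straightforward.
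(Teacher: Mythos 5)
Your proof is correct and follows essentially the same route as the paper: decompose the population under $\nu_n$ into $n$ i.i.d.\ subpopulations, apply the strong law of large numbers separately to $n^{-1}\sum_i N_t^{(i)}$ and to $n^{-1}\sum_i\sum_{u\in V_t^{(i)}}F(X_{[0,t]}^u)$, conclude by dominated convergence using the boundedness of $F$ and $\mathbb{P}_{\nu_n}(\Omega_t)\to 1$, and identify the limit via the Many-to-One formula \eqref{mtomesurable}. Your explicit remark that the argument requires $m(\nu,0,t)=\E_\nu[N_t]<\infty$ (e.g.\ via $\int_{\mathcal{X}}|x|^\gamma\nu(dx)<\infty$ and the bound from Lemma \ref{lemma:nonexpl}) is a legitimate integrability caveat that the paper leaves implicit.
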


\begin{proof}

Let $t\geq 0$. Let $(X_i)_{1\leq i\leq n}$ be i.i.d random variables with distribution $\nu$ and $\nu_n=\sum_{i=1}^{n}\delta_{X_i}$. Let $F:\mathbb{D}\left([0,t],\mathcal{X}\right)\rightarrow\mathbb{R}_+$ be a bounded measurable function. First, we notice that
\begin{align}\label{eq:lawlarge}
\frac{1}{n}N_t^{\nu_n}=\frac{1}{n}\sum_{i=1}^nN_t^{(i)},
\end{align}
where $N_t^{(i)}$ are independent copies of $N_t$ with initial distribution $\delta_{X_i}$. According to the law of large numbers, \eqref{eq:lawlarge} converges almost surely as $n$ tends to infinity to $m(\nu,0,t)=\int_{\mathcal{X}}m(x,0,t)\nu(dx)$. Next, let $\Omega_t(\nu_n)=\left\{N_t^{\nu_n}>0\right\}$. $\left(\Omega_t(\nu_n)\right)_{n\geq 0}$ is a increasing sequence. According to \eqref{nonextinct}, there exists $0<\varepsilon(t)\leq 1$ such that
\[\mathbb{P}(\Omega_t(\nu_n)^C)\leq(1-\varepsilon(t))^n\underset{n\rightarrow +\infty}{\longrightarrow} 0,\]
so that:
\begin{align*}
\mathbf{1}_{\left\{\Omega_t(\nu_n)^C\right\}}\underset{n\rightarrow +\infty}{\longrightarrow} 0, \text{ almost surely}. 
\end{align*}  We have
\begin{align*}
\mathbb{E}\left[F\left(X_{[0,t]}^{U(t),\nu_n}\right)\right]=\mathbb{E}\left[\mathbf{1}_{\left\{\Omega_t(\nu_n)\right\}}\frac{1}{N_t^{\nu_n}}\sum_{u\in V_t^{\nu_n}} F\left(X_{[0,t]}^u\right)\right]\mathbb{P}\left(\Omega_t(\nu_n)\right)^{-1}.
\end{align*}
Let $V_t^{(i)},i=1\ldots n$ be independent identically distributed populations at time $t$ coming from an individual with trait $X_i\sim \nu$ at $0$. Then,
\begin{align}\label{eq:limit}
\mathbb{E}\left[\mathbf{1}_{\left\{\Omega_t(\nu_n)\right\}}\frac{1}{N_t^{\nu_n}}\sum_{u\in V_t^{\nu_n}}F\left(X_{[0,t]}^u\right)\right]&=\mathbb{E}\left[\mathbf{1}_{\left\{\Omega_t(\nu_n)\right\}}\frac{n}{N_t^{\nu_n}}\frac{1}{n}\sum_{i=1}^n\sum_{u\in V_t^{(i)}}F\left(X_{[0,t]}^u\right)\right].
\end{align}
According to the strong law of large numbers, 
\begin{align*}
\frac{1}{n}\sum_{i=1}^n\sum_{u\in V_t^{(i)}}F\left(X_{[0,t]}^u\right)\underset{n\rightarrow +\infty} {\longrightarrow} \E_{\nu}\left[\sum_{u\in V_t}F\left(X_{[0,t]}^u\right)\right],\text{ almost surely.}
\end{align*}
Taking the limit in \eqref{eq:limit} as $n$ tends to infinity, by dominated convergence because $F$ is bounded, we have
\begin{align*}
\mathbb{E}\left[F\left(X_{[0,t]}^{U(t),\nu_n}\right)\right]\underset{n\rightarrow +\infty} {\longrightarrow}\frac{1}{m(\nu,0,t)}\int_{\mathcal{X}}\E_{x}\left[\sum_{u\in V_t}F\left(X_{[0,t]}^u\right)\right]\nu(dx),
\end{align*}
because $\mathbb{P}\left(\Omega_t(\nu_n)\right)\rightarrow 1$ as $n$ tends to infinity. Finally, applying the Many-to-one formula \eqref{mtomesurable}, we obtain
\begin{align*}
\mathbb{E}\left[F\left(X_{[0,t]}^{U(t),\nu_n}\right)\right]\underset{n\rightarrow +\infty} {\longrightarrow}\frac{\int_{\mathcal{X}}m(x,0,t)\mathbb{E}_x\left[F\left(Y_{[0,t]}^{(t)}\right)\right]\nu(dx)}{m(\nu,0,t)}.
\end{align*}
\end{proof}
\begin{rem}
If we start with $n$ individuals with the same trait $x$, we obtain
\begin{align*}
\mathbb{E}\left[F\left(X_{[0,t]}^{U(t),\nu_n}\right)\right]\underset{n\rightarrow +\infty} {\longrightarrow}\mathbb{E}_x\left[F\left(Y_{[0,t]}^{(t)}\right)\right].
\end{align*}
Therefore, the auxiliary process describes exactly the dynamic of the trait of a uniformly sampled individual in the large initial population limit, if all starting individuals have the same trait. 
If the initial individuals have different traits at the beginning, the large population approximation of a uniformly sampled individual is a linear combination of the auxiliary process. 
\end{rem}
\begin{rem}
One can easily generalizes this results to a $k$-tuple of individuals uniformly picked at time $t$. If you start with a population of size $n$ and you pick $k$ individuals uniformly at random at time $t$, when $n$ tends to infinity, the probability that those $k$ individuals comes from the same initial individual is zero. Then, the trajectories of their traits are independent and for example in the case $k=2$, for any $f,g:\mathbb{D}\left([0,t],\mathcal{X}\right)\rightarrow\mathbb{R}_+$ bounded measurable functions, we get
\begin{align*}
\mathbb{E}\left[f\left(X_{[0,t]}^{U_1(t),\nu_n}\right)g\left(X_{[0,t]}^{U_2(t),\nu_n}\right)\right]\underset{n\rightarrow +\infty} {\longrightarrow}\mathbb{E}_x\left[f\left(Y_{[0,t]}^{(t),1}\right)g\left(Y_{[0,t]}^{(t),2}\right)\right],
\end{align*}
where $U_1(t),U_2(t)$ are independent random variables with uniform distribution on $V_t$ and the processes $\left(Y_s^{(t),1},s\leq t\right), \left(Y_s^{(t),2},s\leq t\right)$ are i.i.d. and distributed as $\left(Y_{s}^{(t)},s\leq t\right).$ 
\end{rem}
\begin{rem}
An other way of characterizing the trait of a uniformly sampled individual via the auxiliary process is to look at the long time behavior of the process instead of looking at the large initial population behavior. This has been done in \cite{bansaye2011limit} in the case of a constant division rate and in \cite{marguet2017law} in a general framework using the ergodicity of ancestral lineages.  
\end{rem}
\subsection{The trait of a uniformly sampled individual for growth-fragmentation models}\label{sec:exfin}
The auxiliary process is a good way of getting simulated random variables corresponding to the trait of a uniformly sampled individual with large initial population or to the trait of a uniformly sampled individual for large times (see \cite{marguet2017law}). Indeed, it is much more quicker to simulate one trajectory of the auxiliary process rather than the dynamic of an entire population. In this section, we detail the auxiliary process for our three examples introduced in Section \ref{sec:exdebut}.
\subsubsection{Linear growth model}\label{sub:TCP3}
For the linear growth model with binary division (Section \ref{sub:TCP}), Assumption \ref{assu:doeblin} is satisfied for $C\equiv 1$ and the large initial population limit of the ancestral process of a sampling grows linearly between two jumps and jumps at time $s$ at rate 
\[
\widehat{B}_{s}^{(t)}(x)=\alpha x\left(1+\frac{1+e^{2\overline{a}\left(t-s\right)}}{1-x\sqrt{\frac{\alpha}{a}}+e^{2\overline{a}\left(t-s\right)}\left(1+x\sqrt{\frac{\alpha}{a}}\right)}\right).
\]
At a jump, there is a unique descendant with trait $\frac{x}{2}$ if
$x$ is the trait of its parent at the splitting time. 
In particular, the rate of division of the limiting process is bigger than the rate of division in a cell line for the original process. It means that in the large initial population limit, a typical individual has overcome more division than any individual.

\subsubsection{Exponential growth model in a varying environment}\label{sub:cell3}
For the exponential growth model in a varying environment with binary division (Section \ref{sub:binary}), Assumption \ref{assu:doeblin} is satisfied for $C\equiv 1$ and  the associated auxiliary process grows exponentially between two jumps and jumps at time $s$ at rate 
\[
\widehat{B}_{s}^{(t)}(x)=\left(\alpha(s) x+\beta\right)\left(1+\frac{1}{1+x\int_s^t \alpha(r)e^{(a-\beta)(r-s)}dr}\right).
\]
The rate of division of the limiting process is again bigger than the division rate of any individual.
At a jump, there is a unique descendant with trait $\frac{x}{2}$ if
$x$ is the trait of its parent at the splitting time. 

This example is a good illustration of the fact that the large initial population limit of the size of a uniformly sampled individual does not correspond to the size of a tagged cell, i.e. the size along a lineage where at each division, you choose randomly one daughter cell. In fact, as the division rate of the auxiliary process is larger than $B$, the number of divisions along the lineage of a uniformly sampled individual is bigger than the number of divisions along the lineage of tagged cell, resulting in a difference on the size of the individuals. However, the distribution of the number of divisions along the lineage of a uniformly sampled individual coincides with the one for the auxiliary process. On Figure \ref{comparaison_trait}, we can see those distributions: the two first distributions, corresponding to the distribution of the number of divisions along the lineage of a uniformly sampled individual and of the auxiliary process, are centered on a bigger number of divisions than the third distribution corresponding to a tagged cell.  
\begin{figure} 
\centering
\includegraphics[width=8.5cm]{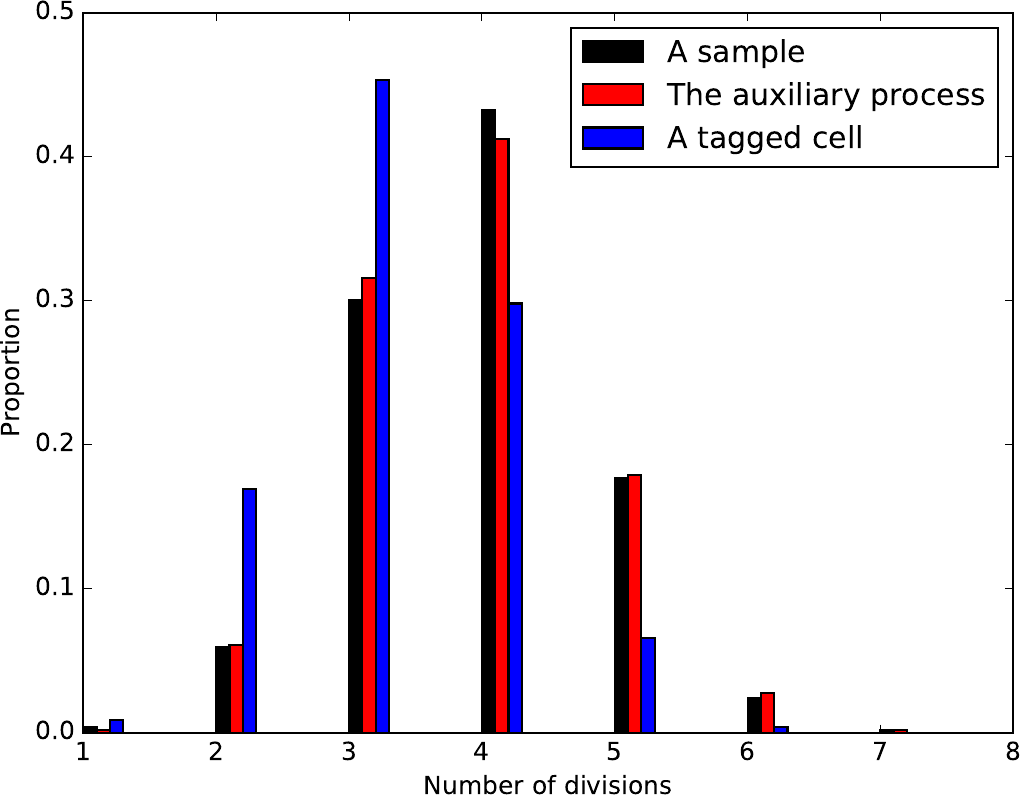} 
\caption{Distribution of the number of divisions in the lineage of a uniformly sampled individual (black bars), of the auxiliary process (red bars) and of a tagged cell (blue bars). For each case, we used $5000$ realizations of each process until time $t=50$ with parameters $a=0.1$ and $x_0=1$. The distribution of the number of divisions almost coincides for the auxiliary process and a sampled individual. However, the distribution of the number of divisions for a tagged cell is different from the two previous ones. Indeed, it is more likely to sample an individual whose ancestors divided many times, that is why, the distributions of the number of divisions for the auxiliary process and for a uniformly sampled individual are centered on bigger values than the distribution of the number of divisions for a tagged cell.}
\label{comparaison_trait}
\end{figure}
 
\subsubsection{Parasite infection model}\label{sub:kimmel2}
For this cell division model with parasite infection, Assumption \ref{assu:doeblin} is satisfied for $C\equiv 1$ and the auxiliary process evolves as a Feller diffusion with infinitesimal generator
\[
\mathcal{F}^{(t)}_sf(x)=\left(gx+2\sigma^2\frac{\alpha x\left(e^{g(t-s)}-e^{\beta(t-s)}\right)}{\alpha x\left(e^{g(t-s)}-e^{\beta(t-s)}\right)+(g-\beta)e^{\beta(t-s)}}\right)f'(x)+\sigma^2xf''(x),
\]
so that the drift of the limit of the process of the ancestral trait of a sampling is bigger than the original drift in the population. Then, the limiting process jumps at time $s$ at rate 
\[
\widehat{B}_{s}^{(t)}(x)=\left(\alpha x+\beta\right)\left(1+\frac{1}{1+\frac{\alpha x}{g-\beta}\left(e^{(g-\beta)\left(t-s\right)}-1\right)}\right).
\]
Therefore, the division rate of the limiting process is also bigger than the rate of division in a cell line for the original process. \\
The trait of the newborn cell is distributed according to the following probability law:
\[
\widehat{P}_{s}^{(t)}(x,dy)=\mathbf{1}_{\left\{0\leq y\leq x\right\}}\frac{2(g-\beta)+2\alpha y \left(e^{(g-\beta)\left(t-s\right)}-1\right)}{2(g-\beta)+\alpha x\left(e^{(g-\beta)\left(t-s\right)}-1\right)}\frac{dy}{x}.
\]
In fact, because cells divide faster when they have more parasites inside them, it is a good strategy, in order to have a lot of descendants in a long time scale, to choose to give a lot of parasites to your daughter cell. Moreover, the evolution of the trait is biased: the drift in the Feller diffusion is more important for the auxiliary process because a cell with more parasites divides faster so that it produces more descendants.
\section{Further comments and examples}\label{sec:other}
We can apply the results of this work to various models and we choose to detail in this article only three of them based on biological and computational considerations. However, we review in this section some other interesting models.

\subsection{The age-structured population model} In this model, the quantity of interest is the age of each individual which grows linearly. The lifetime of each individual is a random variable with cumulative distribution function $G$. Such models have been first introduced by Bellman and Harris in \cite{bellman1952age} and have recently been studied in order to infer the division rate \cite{hoffmann2015nonparametric}. Let $B:\mathbb{R}_+\rightarrow\mathbb{R}$ be the rate of division of each cell defined via
\[
G(t)=1-\exp\left(-\int_0^tB(s)ds\right).
\]
The branching process $(Z_t)_{t\geq 0}$ is solution of the following equation, for any function $f\in \mathcal{C}^1(\mathbb{R}_+)$ and any $x\in\mathcal{X}$: 
\begin{align*}
\langle Z_t,f\rangle=&\langle Z_0,f\rangle +\int_0^t \int_{\mathbb{R}_+} f'(x)Z_s(dx)ds\\
&+\int_0^t\int_{\mathcal{U}\times \mathbb{R}_+\times \mathbb{N}}\mathbf{1}_{\left\{u\in V_{s^-},\ \theta\leq B\left( X_{s^-}^u\right)\right\}}\left(kf\left(0\right)-f\left(X_{s^-}^u\right)\right)M(ds,du,d\theta,dk),
\end{align*}
where $M$ is a Poisson point measure on $\mathbb{R}_+\times\mathcal{U}\times \mathbb{R}_+\times \mathbb{N}$ with intensity $ds\otimes n(du)\otimes d\theta\otimes p(dk)$, where $p$ denotes the distribution of the number of descendants.

In order to get information on the average number of individuals in the population at time $t$, we follow Harris in \cite[Chapter 6]{harris} and we obtain
\[
m(0,0,t)=1-G(t)+m\int_0^tm(0,0,t-u)dG(u),
\]
where $m$ is the average number of descendants at division. From this expression, we can derive a renewal equation for $m(x,0,t)$, for $x\geq 0$.
We cannot find an explicit solution to this renewal equation except in the case of an exponentially distributed lifetime but we know the asymptotic behavior of a solution (see \cite{harris}). In particular, if $G$ is non-lattice and $m>1$,
\begin{align*}
m(0,0,t)\underset{t\rightarrow +\infty}{\sim} c(\alpha,m)n_1 e^{\alpha t},\quad \text{where }\int_0^{\infty}e^{-\alpha t}dG(t)=1/m,
\end{align*}
and
$c(\alpha,m)$ and $n_1$ are explicitly given in \cite[Theorem 17.1]{harris}. The rate of division of the auxiliary process is thus given for large $t$ by
\begin{align*}
\widehat{B}_s^{(t)}\sim B(x)\frac{e^{-\alpha x}(1-G(x))}{1-m\int_0^x e^{-\alpha u}dG(u)}.
\end{align*}
\subsection{Multi-type branching process and switching} An example of phenomenon that we would like to understand using a model on a finite state space is the phenotypic switching, i.e. the capacity to achieve multiple internal states in response to a single set of external inputs. Examples of studies of switching can be found in \cite{ozbudak2004multistability} or \cite{leibler2010individual}. For an asymptotic characterization of the ancestral lineage of a typical individual for models with a trait on a finite state space, we refer to \cite{georgii2003supercritical}. We assume here that an individual can be in state $0$ or $1$ which is constant during its lifetime. An individual in state $x=0,1$ divide at rate $B(x)=b_x$ and at division, it is replaced by $2$ individuals. We denote by $p$ the probability of switching at birth. We assume that this probability does not depend on the trait. Therefore, the trait only affects the lifetime of individuals. 
We obtain for the generator of the first moment semi-group for any function $f$ taking values in $\left\lbrace 0,1\right\rbrace$ and any $x\in\left\lbrace 0,1\right\rbrace$:
\begin{align*}
\mathcal{F}_{\text{switch}}f(x)=B(x)\left(2f(x)(1-p)+2f(\overline{x})p-f(x)\right),
\end{align*}
where $\overline{x}=1-x$.
After some computations, we obtain
\begin{align*}
\mathbb{E}_{\delta_x}\left[\sum_{u\in V_t}X_t^u\right]=x +(b_1(1-2p)-2pb_0)\int_0^t\mathbb{E}_{\delta_x}\left[\sum_{u\in V_s}X_s^u\right]ds+2pb_0\int_0^t\mathbb{E}_{\delta_x}\left[N_s\right]ds.
\end{align*}
Then, if we write: \[\mu(t)=\E_{\delta_x}\left[N_t\right],\ \nu(t)=\E_{\delta_x}\left[\sum_{u\in V_t}X_t^u\right],\ \forall t\geq 0,\]
we obtain
\[\left(\begin{array}{c}
\partial_t \mu\\
\partial_t \nu
\end{array}\right)
=
\left(\begin{array}{cc}
b_0&b_1-b_0\\
2pb_0& b_1(1-2p)-2pb_0
\end{array}\right)
\left(\begin{array}{c}
\mu\\
\nu\\
\end{array}\right).\]
For example, for $p=1/2$, writing $\gamma=\frac{b_0}{b_1}$, we have
\begin{align*}
m(1,s,t)=m(0,s,t)+\left[e^{\sqrt{b_0b_1}(t-s)}-e^{-\sqrt{b_0b_1}(t-s)}\right]\frac{1}{2\sqrt{\gamma}}(1-\gamma).
\end{align*} 
In particular, the transition kernel of the auxiliary process is given by
\begin{align*}
\widehat{P}_s^{(t)}(x,dy)=\frac{m(x,s,t)\delta_x(dy)+m(\overline{x},s,t)\delta_{\overline{x}}(dy)}{m(x,s,t)+m(\overline{x},s,t)},
\end{align*}
so that if $\gamma>1$, i.e. $b_0>b_1$, the auxiliary process switches more from $1$ to $0$ at a jump because $m(0,s,t)>m(1,s,t)$.
\subsection{Markovian jump processes for the dynamic of the trait} The dynamic of some characteristics of a cell are non-continuous and thus cannot be described by a diffusion type process. For example, this is the case for the dynamic of populations inside a cell such as plasmids or extra-chromosomal DNA. Then, an other generalization of Kimmel's multilevel model for plasmids \cite{kimmel1997quasistationarity} is the following: we assume that the trait of each individual evolves as a birth and death process with birth rate $\lambda>0$ and death rate $\mu>0$. We assume here that $\lambda-\mu>0$. The generator of the process corresponding to the dynamic of the trait is then given for any measurable function $f:\mathbb{N}\rightarrow \mathbb{R}_+$ and any $x\in \mathbb{N}$ by
\begin{align*}
\mathcal{G}f(x)=\lambda(f(x+1)-f(x))+\mu(f(x-1)-f(x)).
\end{align*}
We assume that a cell with $x$ plasmids divides at a rate $B(x)$ and that at division, the plasmids are randomly allocated to one of the two daughter cells. The branching process $(Z_t,t\geq 0)$ is solution of the following equation, for any measurable function $f:\mathbb{N}\rightarrow \mathbb{R}_+$ and any $x\in\mathcal{X}$,
\begin{multline*}
\langle Z_t,f\rangle =\langle Z_0,f\rangle +\int_0^t \int_{\mathbb{R}_+}\sum_{u\in V_s}\left[\mathbf{1}_{\left\{\theta\leq \lambda X_{s^-}^u\right\}}\left(f(X_{s^-}^u+1)-f(X_{s^-}^u)\right)\right.\\
\left. +\mathbf{1}_{ \left\{\lambda X_{s^-}^u\leq \theta\leq  (\lambda+\mu) X_{s^-}^u\right\}}\left(f(X_{s^-}^u-1)-f(X_{s^-}^u)\right)\right]Q^u(ds,d\theta)\\
+\int_0^t\int_{\mathcal{U}\times \mathbb{R}_+\times [0,1]}\mathbf{1}_{\left\{u\in V_{s^-},\ z\leq B\left( X_{s^-}^u\right)\right\}}\left(f\left(\delta X_{s^-}^u\right)+f\left((1-\delta) X_{s^-}^u\right)-f\left(X_{s^-}^u\right)\right)\\
\times M(ds,du,dz,d\delta),
\end{multline*}
where $\left(Q^u\right)_{u\in\mathcal{U}}$ is a family of Poisson point measure on $\mathbb{R}_+\times \mathbb{R}_+$ with intensity $ds\otimes d\theta$ and $M$ is a Poisson point measure on $\mathbb{R}_+\times\mathcal{U}\times \mathbb{R}_+\times [0,1]$ with intensity $ds\otimes n(du)\otimes dz\otimes d\delta$.

For example, for the division rate $B(x)=x$, we obtain for the average number of individuals in the population after a time $t$
\begin{align*}
m(x,s,t)=1+\frac{x}{\lambda-\mu}\left(e^{(\lambda-\mu)t}-1\right).
\end{align*}
In particular, the motion of the auxiliary process between jumps is given by the following generator:
\begin{align*}
\widehat{\mathcal{G}}_s^{(t)}f(x)=&\lambda\left[1+\frac{e^{(\lambda-\mu)(t-s)}-1}{\lambda-\mu+x\left(e^{(\lambda-\mu)(t-s)}-1\right)}\right]\left(f(x+1)-f(x)\right)\\
&+\mu\left[1-\frac{e^{(\lambda-\mu)(t-s)}-1}{\lambda-\mu+x\left(e^{(\lambda-\mu)(t-s)}-1\right)}\right]\left(f(x-1)-f(x)\right).
\end{align*}
The birth rate of the plasmid population for the auxiliary process is bigger than $\lambda$ and the death rate is smaller than $\mu$. This can be explained again by the fact that cells with a lot of plasmids divides more so that they are more represented at sampling. 
\subsection{Link with the integro-differential model}

The study of the average process associated with the measure-valued branching process $Z$ is interesting in the sense that it characterizes the macroscopic evolution of the population. For a more detailed study of this link see for example \cite{campillo2016links}. The following result is a corollary of Theorem \ref{th:existence} of Section \ref{sec:exis}. We recall that for all $s\geq 0$, $t\geq s$ and $x\in\mathcal{X}$,
\begin{align*}
R_{s,t}f(x)=\E\left[\sum_{u\in V_t} f\left(X_t^u\right)\middle|Z_s=\delta_x\right],
\end{align*}
where $f$ is a measurable function. 

\begin{cor} Let $f\in D\left(\mathcal{G}\right)$, $s\geq 0$ and $x_0\in \mathcal{X}$. Under Assumptions \ref{assu:debut1}(1-3) and \ref{assu:debut2}, the measure $\left(R_{s,t}(x_0,\cdot)\right)_{t\geq 0}$ is the unique solution to the following integro-differential equation:
\begin{multline}
R_{s,t}f(s,x_0)  =f\left(s,x_{0}\right)+\int_{s}^{t}\int_{\mathcal{X}}\left(\mathcal{G}f(r,x)+\partial_{r}f\left(r,x\right)\right)R_{s,r}(x_0,dx)ds \\
+\int_{s}^{t}\int_{\mathcal{X}}B(x)\left[\sum_{k\geq0}p_{k}(x)\sum_{j=1}^{k}\int_{\mathcal{X}}f\left(r,y\right)P_j^{(k)}\left(x,dy\right)-f\left(r,x\right)\right]R_{s,r}(x_0,dx)ds,\label{eq:moyenne}
\end{multline}
where $\left(R_{s,t}\right)_{t\geq s}$ is defined in \eqref{eq:firstmoment}.
\end{cor}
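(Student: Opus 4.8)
The plan is to read off both assertions from the well-posedness of the measure-valued dynamics established in Theorem \ref{th:existence}: existence by taking expectations in the evolution equation \eqref{eq:evol}, and uniqueness by a localization-plus-Gr\"onwall argument modelled on the uniqueness part of Lemma \ref{prop:L'ensemble-des-mesures}. Throughout, $f$ is understood as a two-variable function with $f(r,\cdot)\in\mathcal{D}(\mathcal{G})$ and $r\mapsto f(r,x)$ continuously differentiable, so that the terms $\mathcal{G}f$ and $\partial_r f$ in \eqref{eq:moyenne} make sense.

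\emph{Existence.} First I would apply \eqref{eq:evol} to test functions that do not depend on the Ulam--Harris label, i.e. $f(u,r,x)=f(r,x)$, so that $\langle\bar{Z}_t,f\rangle=\sum_{u\in V_t}f(t,X_t^u)$, and then take $\E_{\delta_{x_0}}$. The martingale term $M_{0,t}^f$ has zero expectation by Lemma \ref{lemma:martingale}; the drift term yields $\int_0^t\int_{\mathcal{X}}(\mathcal{G}f(r,x)+\partial_r f(r,x))R_{0,r}(x_0,dx)dr$ by Fubini, which is licit because $\mathcal{G}f$ and $\partial_r f$ are bounded while $r\mapsto\E_{\delta_{x_0}}[N_r]$ is locally bounded (Lemma \ref{lemma:nonexpl}). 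For the Poisson term, replacing $M$ by its intensity $ds\otimes n(du)\otimes dz\otimes dl\otimes d\theta$, the integration in $z$ over $[0,B(X_{s^-}^u)]$ produces the factor $B(X_{s^-}^u)$, and the integration in $(l,\theta)$, together with the fact that $F_j^{(k)}(x,\theta)$ has law $P_j^{(k)}(x,\cdot)$, produces $\sum_{k\geq0}p_k(x)\sum_{j=1}^{k}\int_{\mathcal{X}}f(r,y)P_j^{(k)}(x,dy)-f(r,x)$; a second Fubini, justified by the bound $|B(x)(\cdots)|\leq(\overline{m}+1)(b_1|x|^{\gamma}+b_2)$ and the $\gamma$-moment estimate of Lemma \ref{lemma:nonexpl}, produces the second integral of \eqref{eq:moyenne}. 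Hence $(R_{s,t}(x_0,\cdot))_{t\geq s}$ solves \eqref{eq:moyenne}, the case of a general starting time $s$ being identical up to a time shift.

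\emph{Uniqueness.} Let $(\mu^1_{s,t})_{t\geq s}$ and $(\mu^2_{s,t})_{t\geq s}$ be two positive-measure solutions of \eqref{eq:moyenne}. Fix $t$ and $h\in\mathcal{D}(\mathcal{G})$ with $\left\Vert h\right\Vert_{\infty}\leq 1$, and test \eqref{eq:moyenne} against $f(r,x)=A_{t-r}h(x)$: this function belongs to $\mathcal{D}(\mathcal{G})$ for each $r$, and the backward equation $\frac{d}{dr}A_{t-r}h=-\mathcal{G}A_{t-r}h$ gives $\mathcal{G}f(r,x)+\partial_r f(r,x)=0$, so the first integral in \eqref{eq:moyenne} vanishes. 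Subtracting the two resulting identities and using $\left\Vert A_{t-r}h\right\Vert_{\infty}\leq\left\Vert h\right\Vert_{\infty}\leq 1$ (contractivity of $(A_u)_{u\geq 0}$), the signed measure $\eta_{s,r}:=\mu^1_{s,r}-\mu^2_{s,r}$ satisfies
\begin{align*}
\left|\eta_{s,t}(x_0,h)\right|\leq(\overline{m}+1)\int_s^t\int_{\mathcal{X}}B(x)\,\left|\eta_{s,r}\right|(x_0,dx)\,dr.
\end{align*}
Since $B$ is unbounded, I would localize exactly as for the operators $Q_{s,r}^{(t),n}$ in Lemma \ref{prop:L'ensemble-des-mesures}, i.e. stop the underlying flow at the exit time of $\mathcal{B}(x_0,n)$ on which $B\leq\overline{B}_n(x_0):=\sup_{\mathcal{B}(x_0,n)}B<\infty$ by continuity of $B$; the total-variation norm being realized over $\mathcal{D}(\mathcal{G})$-functions thanks to the density of $\mathcal{D}(\mathcal{G})$ in $\mathcal{C}_b(\mathcal{X})$ (Remark \ref{rem:densite}), this yields a Gr\"onwall inequality $\left\Vert\eta_{s,r}\right\Vert_{TV}\leq(\overline{m}+1)\overline{B}_n(x_0)\int_s^r\left\Vert\eta_{s,v}\right\Vert_{TV}dv$ up to the localization time, hence $\eta\equiv0$ there. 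Letting $n\to\infty$ then gives $\mu^1\equiv\mu^2$, the passage to the limit being controlled by the fact that any positive solution of \eqref{eq:moyenne}, by testing against $h_{n,\gamma}$ and letting $n\to\infty$ as in Lemma \ref{lemma:nonexpl}, inherits the same local $\gamma$-moment bound and therefore does not lose mass at infinity.

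\emph{Main obstacle.} I expect the only genuinely delicate points to be the two Fubini interchanges on the existence side and, on the uniqueness side, the control of escape of mass needed to remove the localization as $n\to\infty$; both rest on the a priori moment estimate of Lemma \ref{lemma:nonexpl}, which is precisely where the fragmentation-type Assumption \ref{assu:debut1}(2) and Assumption \ref{assu:debut2} enter. Everything else reduces to bookkeeping already performed in Lemma \ref{prop:L'ensemble-des-mesures}.
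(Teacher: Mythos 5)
Your proposal is correct and follows essentially the same route as the paper: the paper's own proof consists precisely of taking expectations in \eqref{eq:evol} (your existence step) and invoking the duality/Gr\"onwall uniqueness argument, which is the same localization scheme the paper carries out in detail for Lemma \ref{prop:L'ensemble-des-mesures}. You have in fact supplied more detail than the paper does, and the points you single out (the Fubini interchanges and the removal of the localization) are indeed the only places where the moment estimate of Lemma \ref{lemma:nonexpl} is genuinely needed.
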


One can prove this result taking the expectation in \eqref{eq:evol} and using the same arguments as in the proof of Corollary 2.4 in \cite{cloez}.

Let $n(t,\cdot):=R_{0,t}(x_0,\cdot)$. Equation \eqref{eq:moyenne} can be rewritten as
\[
\begin{cases}
\partial_t n(t,x)  =\mathcal{G}^Tn(t,x)+\sum_{k\geq 0}\sum_{j=1}^k K_j^{(k)}\left(Bp_kn(t,\cdot)\right)-B(x)n(t,x),\\
n(0,x)dx=\delta_{x_0}(dx).
\end{cases}
\]
where $\mathcal{G}^T$, $\mathcal{G}$ are the adjoint operators of $K_j^{(k)}$ and $f\mapsto \int_{\mathcal{X}} f\left(y\right)P_j^{(k)}(x,dy)$ respectively, as in \cite{cloez}.

For example, in the case of the cell division model with exponential growth introduced in Section \ref{sub:binary}, we obtain in a weak sense
\begin{align*}
\partial_tn(t,x)+\partial_x\left(axn(t,x)\right)= 4B(2x)n(t,2x)-B(x)n(t,x). 
\end{align*}
This is a classical growth-fragmentation equation as the one studied in \cite{mischler2015spectral} or \cite{campillo2016links}. The solutions of the associated eigenvalue problem permit in particular to quantify the asymptotic global growth rate of the population.
\paragraph*{Acknowledgements.}I would like to thank Vincent Bansaye for his guidance during this work. I thank Marc Hoffmann and Bertrand Cloez for many helpful discussions on the subject of this paper. I also thank an anonymous referee for its useful comments. I acknowledge partial support by the Chaire Mod\'elisation Math\'ematique
et Biodiversit\'e of Veolia Environnement - \'Ecole Polytechnique - Museum National Histoire Naturelle - F.X. This work is supported by the "IDI 2014" project funded by the IDEX Paris-Saclay, (ANR-11-IDEX-0003-02) and by the French national research agency (ANR) via project MEMIP (ANR-16-CE33-0018).
\appendix
\section{Proof of Lemma \ref{lemma:existence}}\label{app:existence}
We give a recursive construction of the solution to \eqref{eq:evol}. For all $u\in \mathcal{U}$, we denote the birth time and the death time of $u$ respectively by $\alpha(u)$ and $\beta(u)$. Let $x_0\in\mathcal{X}$ be given. We construct a structured population $Y^k=\left(\bar{Z}^k,(X_s^u,\ s\geq T_k(\bar{Z}^k),\ u\in V_{T_k(\bar{Z}^k)})\right)$, where $\bar{Z}^k\in\mathbb{D}\left(\mathbb{R}_{+},\mathcal{M}_{P}\left(\mathcal{U}\times\mathcal{X}\right)\right)$ is such that $T_{k+1}=+\infty$. We set $\alpha(\emptyset)=0$, $X_0^{\emptyset}=x_0$, $V_0=\left\lbrace \emptyset\right\rbrace$ and $\bar{Z}^0_t\equiv\delta_{\left(\emptyset,x_0\right)}$ for all $t\geq 0$, so that
\[Y^0=(\bar{Z}^0,(\Phi^{\emptyset}(x_0,0,t), t\geq 0)).
\]
Let $k\geq 1$. We now construct $Y^{k+1}$. For all $u\in V_{T_k(\bar{Z}^k)}$ such that $\alpha(u)=T_k(\bar{Z}^k)$ and for all $t\geq \alpha(u)$, we set $X_t^u=\Phi^u(X_{\alpha(u)}^u,\alpha(u),t)$. For all $u\in V_{T_k(\bar{Z}^k)}$, let
\[
\beta(u)=\inf\left\lbrace t>\alpha(u),\ \int_{\alpha(u)}^t\int_{\mathbb{R}_+} \mathbf{1}_{\left\{z\leq B(X_{s^-}^{u})\right\}}M(ds,\left\{u\right\},dz,[0,1],[0,1])>0\right\}.
\]
Let $T=\inf\left\lbrace\beta(u),\ u\in V_{T_k(\bar{Z}^k)}\right\rbrace$.
Let $(T,U_{k+1},\theta_{k+1},L_{k+1},A_{k+1})$ be the unique quintuplet of random variables such that $M\left(\{T\},\{U_{k+1}\},\{\theta_{k+1}\},\{L_{k+1}\},\{A_{k+1}\}\right)=1$. Let
\[
V_{T}=V_{T^-}\setminus\left\lbrace U_{k+1}\right\rbrace\bigcup\left\lbrace U_{k+1}1,\ldots,U_{k+1}G(U_{k+1},T,L_{k+1})\right\rbrace,
\]
and for all $i=1,\ldots, G\left(X^{U_{k+1}}_T,L_{k+1}\right)$, we set $\alpha(U_{k+1}i)=T$ and
\[
X_{\alpha(U_{k+1}i)}^{U_{k+1}i} =F_i\left(X^{U_{k+1}}_T,L_{k+1},A_{k+1}\right).
\]
We set 
\begin{align*}
\bar{Z}^{k+1}_t&=\bar{Z}^k_t,\text{ for all }t\in [0,T_k(\bar{Z}^k)],\\
\bar{Z}^{k+1}_t&=\sum_{u\in V_{T_k(\bar{Z}^k)}}\delta_{\left(u,X_t^u\right)}, \text{ for all }t\in [T_k(\bar{Z}^k),T[,\\
\bar{Z}^{k+1}_t&=\sum_{u\in V_{T}}\delta_{\left(u,X_T^u\right)}, \text{ for all }t\geq T.
\end{align*}
Finally, we set $Y^{k+1}=(\bar{Z}^{k+1},\left(X_s^u,\ s\geq T, u\in V_T\right))$ so that $T_{k+1}(\bar{Z}^{k+1})=T$.

Let $\bar{Z}$ be the measure-valued branching process on $\mathbb{R}_+$ satisfying, for all $k\in\mathbb{N}$ and all $t\geq 0$,
\[
\bar{Z}_{t\wedge T_k(\bar{Z}^k)}=\bar{Z}_t^k.
\]
Therefore, $T_k(\bar{Z})=T_k(\bar{Z}^k)$ for all $k\in\mathbb{N}$. To shorten notation, we write $T_k$ instead of $T_k(\bar{Z})$ until the end of the proof.

Let $f\in \mathcal{D}(\mathcal{G})$. We now prove by induction the following property:
\begin{align*}
\mathcal{H}_k:\left\lbrace\forall t\in [T_k,T_{k+1}),\ \langle \bar{Z}_{t},f\rangle\text{ is a solution to \eqref{eq:evol}}.\right\rbrace
\end{align*}
First, $\mathcal{H}_0$ is obviously true. Assume that $\mathcal{H}_{k-1}$ is true. Let $t\in [T_k,T_{k+1})$. We recall that $U_k$ denotes the individual who dies at time $T_k$. We denote by
\begin{align*}
V_{t,1}=V_{T_{k-1}}\setminus\left\lbrace U_k\right\rbrace,\ V_{t,2}=\left\lbrace u\in V_t |\alpha(u)=T_k\right\rbrace,
\end{align*}
the set of all individuals born strictly before $T_k$ except $U_k$ and the descendants of $U_k$, respectively.
We have
\begin{align*}
\sum_{u\in V_t} f\left(u,t,X_t^u\right)=\sum_{u\in V_{t,1}}f\left(u,t,X_t^u\right)+\sum_{u\in V_{t,2}}f\left(u,t,X_t^u\right),
\end{align*}
and
\begin{align*}
f\left(u,t,X_t^u\right)=f\left(u,t,\Phi^u\left(X_{T_{k-1}},T_{k-1},t\right)\right).
\end{align*}
As none of the individuals in $V_{T_{k-1}}\setminus\left\lbrace U_k\right\rbrace$ divides on $[T_{k-1},t],$ using \eqref{eq:probmart}, we obtain
\begin{align*}
& f\left(u,t,X_t^u\right)=f\left(u,T_{k-1},X_{T_{k-1}}^u\right)+ \int_{T_{k-1}}^{t}\left(\mathcal{G}f(u,s,X_s^u)+\partial_s f\left(u,s,X_s^u\right)\right)ds\\
& \hspace{10cm}+ M_{T_{k-1},t}^{f,u}\left(X^u_{T_{k-1}}\right).
\end{align*}
Then, we split both the integral term and the martingale in two terms to separate the behavior of the population before $T_k$ and after $T_k$. We add and subtract the contribution corresponding to $U_k$ to get a sum over all individuals alive at time $T_{k-1}$:  
\begin{multline}\label{eq:ind1}
\sum_{u\in V_{t,1}}f\left(u,t,X_t^u\right)=\sum_{u\in V_{T_{k-1}}\setminus\left\lbrace U_k\right\rbrace} \int_{T_{k}}^{t}\left(\mathcal{G}f(u,s,X_s^u)+\partial_s f\left(u,s,X_s^u\right)\right)ds\\
+\sum_{u\in V_{T_{k-1}}\setminus\left\lbrace U_k\right\rbrace}M_{T_{k},t}^{f,u}\left(X^u_{T_{k}}\right)-f\left(U_k,T_k^-,X_{T_{k}^-}^{U_k}\right)+\sum_{u\in V_{T_{k-1}}}f\left(u,T_{k-1},X_{T_{k-1}}^u\right)\\
+ \sum_{u\in V_{T_{k-1}}}\int_{T_{k-1}}^{T_k}\left(\mathcal{G}f(u,s,X_s^u)+\partial_s f\left(u,s,X_s^u\right)\right)ds+\sum_{u\in V_{T_{k-1}}}M_{T_{k-1},T_k}^{f,u}\left(X^u_{T_{k-1}}\right).
\end{multline}
Using the induction hypothesis, we have
\begin{multline}\label{eq:ind2}
\sum_{u\in V_{T_{k-1}}}f\left(u,T_{k-1},X_{T_{k-1}}^u\right)=f\left(u,0,x_{0}\right)\\
+\int_{0}^{T_{k-1}}\int_{\mathcal{U}\times\mathcal{X}}\left(\mathcal{G}f(u,s,x)+\partial_s f\left(u,s,x\right)\right)\bar{Z}_{s}\left(dudx\right)ds+ M_{0,T_{k-1}}^f(x_0)\\
 +\int_{0}^{T_{k-1}}\int_{E}\mathbf{1}_{\left\{u\in V_{s^{-}},\ z\leq B\left(X_{s^{-}}^u\right)\right\}}
 \left(\sum_{i=1}^{G\left(X_s^u,l\right)}f\left(u,s,F_{i}\left(X^u_s,l,\theta\right)\right)-f\left(u,s,X_{s^{-}}^{u}\right)\right)\\
 \times M\left(ds,du,dz,dl,d\theta\right).
\end{multline}
Moreover, for all $s\in[T_{k-1},T_k[$, $V_s=V_{T_{k-1}}$, so that we have
\begin{align}\nonumber
\sum_{u\in V_{T_{k-1}}} \int_{T_{k-1}}^{T_k} & \left(\mathcal{G}f(u,s,X_s^u)+\partial_s f\left(u,s,X_s^u\right)\right)ds\\\label{eq:ind3}
& = \int_{T_{k-1}}^{T_k}\sum_{u\in V_{s}}\left(\mathcal{G}f(u,s,X_s^u)+\partial_s f\left(u,s,X_s^u\right)\right)ds.
\end{align}
Finally, combining \eqref{eq:ind1}, \eqref{eq:ind2} and \eqref{eq:ind3}, we obtain
\begin{multline}\label{eq:constr1}
\sum_{u\in V_{t,1}}f\left(u,t,X_t^u\right)=f\left(\emptyset,0,x_{0}\right)+\int_{0}^{T_{k}}\int_{\mathcal{U}\times\mathcal{X}}\left(\mathcal{G}f(u,s,x)+\partial_s f\left(u,s,x\right)\right)\bar{Z}_{s}\left(dudx\right)ds\\
+M_{0,T_{k-1}}^{f}(x_0)+\sum_{u\in V_{T_{k-1}}}M_{T_{k-1},T_k}^{f,u}\left(X^u_{T_{k-1}}\right)-f\left(U_k,T_k^-,X_{T_{k}^-}^{U_k}\right)\\
 +\int_{0}^{T_{k-1}}\int_{E}\mathbf{1}_{\left\{u\in V_{s^{-}},\ z\leq B\left(X_{s^{-}}^u\right)\right\}}
 \left(\sum_{i=1}^{G\left(X_s^u,l\right)}f\left(u,s,F_{i}\left(X_s^u,l,\theta\right)\right)-f\left(u,s,X_{s^{-}}^{u}\right)\right)\\
 \times M\left(ds,du,dz,dl,d\theta\right)\\
 +\sum_{u\in V_{T_{k-1}}\setminus\left\lbrace U_k\right\rbrace}\left[ \int_{T_{k}}^{t}\left(\mathcal{G}f(u,s,X_s^u)+\partial_s f\left(u,s,X_s^u\right)\right)ds+M_{T_{k},t}^{f,u}\left(X^u_{T_{k}}\right)\right].
\end{multline}
Next, using again \eqref{eq:probmart}, we have
\begin{align}\nonumber
\sum_{u\in V_{t,2}}& f\left(u,t,X_t^u\right)\\\label{eq:constr2}
&=\sum_{u\in V_{t,2}}\left[f(u,T_k,X_{T_k}^u)+\int_{T_{k}}^{t}\left(\mathcal{G}f(u,s,X_s^u)+\partial_s f\left(u,s,X_s^u\right)\right)ds+M_{T_k,t}^{f,u}\left(X^u_{T_{k}}\right)\right].
\end{align}
Moreover, by definition of $V_{t,2}$,
\begin{align}\nonumber
\sum_{u\in V_{t,2}}& f(u,T_k,X_{T_k}^u)\\\label{eq:constr3}
& =\int_{T_{k-1}}^t\int_E\mathbf{1}_{\left\{u\in V_{s^-},\ z\leq B(X_s^u)\right\}}\sum_{i=1}^{G(X_s^u,l)}f(u,s,F_i(X_s^u,l,\theta))M(ds,du,dz,dl,d\theta).
\end{align}
Adding the martingale terms of \eqref{eq:constr1} and \eqref{eq:constr2}, we obtain
\begin{align}\nonumber
M_{0,T_{k-1}}^f(x_0)& +\sum_{u\in V_{T_{k-1}}}M_{T_{k-1},T_k}^{f,u}\left(X^u_{T_{k-1}}\right)\\\label{eq:constr4}
&+\sum_{u\in V_{T_{k-1}}\setminus\left\lbrace U_k\right\rbrace} M_{T_{k-1},t}^{f,u}\left(X^u_{T_{k-1}}\right)+\sum_{u\in V_{T_{k}}, \alpha(u)=T_k} M_{T_k,t}^{f,u}\left(X^u_{T_{k}}\right) =M_{0,T_k}^f(x_0).
\end{align}
Finally, we obtain the result combining \eqref{eq:constr2},\eqref{eq:constr3} and \eqref{eq:constr4}.
\section{Proof of Lemma \ref{lemma:uniqueness}}\label{app:uniqueness}
Let $\bar{Z}^{(1)}$ and $\bar{Z}^{(2)}$ be two solutions of \eqref{eq:evol} associated with the previously defined family of flows and Poisson point measure. For all $k\in\mathbb{N}$, we write $T^{(i)}_{k}=T_k(\bar{Z}^{(i)})$, $i=1,2$. We assume that $\bar{Z}^{(1)}_0=\bar{Z}^{(2)}_0=\delta_{x} $, for some $x\in\mathcal{X}$. We have $T^{(1)}_{0}=T_{0}^{(2)}=0$. We prove by induction on $k\in\mathbb{N}$ the following proposition:
\begin{align*}
\mathcal{H}_k:\ T^{(1)}_{k+1}=T^{(2)}_{k+1} \text{ and } \forall t\in [T_k^{(1)},T_{k+1}^{(1)}),\ \forall f\in \bar{\mathcal{D}}(\mathcal{G}),\ \langle \bar{Z}^{(1)}_t,f\rangle=\langle \bar{Z}^{(2)}_t,f\rangle.
\end{align*}
First, $\mathcal{H}_0$ is true because
\begin{align*}
T^{(1)}_{1}=T^{(2)}_{1}=\inf\left\lbrace t>0,\ \int_{0}^t\int_{\mathbb{R}_+} \mathbf{1}_{\left\{z\leq B(\Phi^{\emptyset}(x,0,s))\right\}}M(ds,\left\{\emptyset\right\},dz,[0,1],[0,1])>0\right\}
\end{align*}
and for all $t\in  [0,T_1)$, $\bar{Z}^{(1)}_t=\bar{Z}^{(2)}_t=\delta_{\Phi^{\emptyset}(x,0,t)}$. 
Let us assume that $\mathcal{H}_{k-1}$ is true. We first prove the second point of $\mathcal{H}_k$. By \eqref{eq:evol}, for $i=1,2$, we have
\begin{multline}
\langle \bar{Z}^{(i)}_{T_k^{(1)}},f\rangle=\langle \bar{Z}^{(i)}_{T_{k-1}^{(1)}},f\rangle\\
+\int^{T_k^{(1)}}_{T_{k-1}^{(1)}}\int_{\mathcal{U}\times\mathcal{X}}\left(\mathcal{G}f(u,s,x)+\partial_s f\left(u,s,x\right)\right)\bar{Z}^{(i)}_{s}\left(du,dx\right)ds+ M_{T_{k-1}^{(1)},T_k^{(1)}}^{f,(i)}(x)\\
 +\int^{T_k^{(1)}}_{T_{k-1}^{(1)}}\int_{E}\mathbf{1}_{\left\{u\in V^{(i)}_{s^{-}},\ z\leq B\left(X_{s^{-}}^{u,(i)}\right)\right\}}
 \Big(\sum_{i=1}^{G\left(X_s^u,l\right)}f\left(u,s,F_{i}\left(X_s^u,l,\theta\right)\right)-f\Big(u,s,X_{s^{-}}^{u,(i)}\Big)\Big)\\\label{eq:jump}
M\left(ds,du,dz,dl,d\theta\right).
\end{multline}
As the jump integral \eqref{eq:jump} depends only on the process strictly before $T_k^{(1)}$, we obtain, using the induction hypothesis, that $\langle \bar{Z}^{(1)}_{T_k^{(1)}},f\rangle=\langle \bar{Z}^{(2)}_{T_k^{(1)}},f\rangle$. The evolution of the trait for $t\in \left[T_k^{(1)},T_{k+1}^{(1)}\wedge T_{k+1}^{(2)}\right)$ only depends on the family of flows given at the beginning and which are the same for both solutions. Hence, it remains to prove that $T_{k+1}^{(1)}= T_{k+1}^{(2)}$. And it is the case because this jump time only depend on the state of the population at $T_k^{(1)}$, on the flows and on the Poisson point measure $M$.
Finally, for all $t\in \left[T_k^{(1)}, T_{k+1}^{(1)}\right)$, we have: $\langle \bar{Z}^{(1)}_{t},f\rangle=\langle \bar{Z}^{(2)}_{t},f\rangle$.

Moreover, the measure-valued process is entirely characterized by $\left\lbrace\langle \bar{Z}_t,f\rangle, f\in\bar{\mathcal{D}}(\mathcal{G})\right\rbrace$ according to Remark \ref{rem:densite}. Therefore, there is a unique c\`adl\`ag measure-valued strong solution to \eqref{eq:evol} up to the $k$th jump time for all $k\in\mathbb{N}$. 
\section{Proof of Lemma \ref{lemma:mdansG}}\label{app:mdansG}
Let $t\geq 0$ and $s\leq t$. We want to prove that $m(\cdot,s, t)\in\mathcal{D}(\mathcal{G})$ i.e. that $$\lim_{r\downarrow 0}\frac{\mathbb{E}(m(X_r,s,t)\big|X_0=x)-m(x,s,t)}{r}=\lim_{r\downarrow s}\frac{\mathbb{E}(m(X_r,s,t)\big|X_s=x)-m(x,s,t)}{r-s}$$ exists. Let $s\leq r <t$ and $x\in\mathcal{X}$. We consider the event $\Omega_r=\left\lbrace \text{no division before } r\right\rbrace$. Then
\begin{align*}
\frac{1}{r-s}\left(\mathbb{E}(m(X_{r},s,t)\big|X_s=x)-m(x,s,t)\right)=A(x,r,s,t)+B(x,r,s,t)+C(x,r,s,t),
\end{align*} where
\begin{align*}
A(x,r,s,t)&=\frac{1}{r-s}\left(\mathbb	E (m(X_{r},s,t)\big|X_s=x)-\mathbb{E}(<Z_{r},m(\cdot,s,t)>\mathbf{1}_{\Omega_{r}}\big|Z_s=\delta_x)\right),\\
B(x,r,s,t)&=\frac{1}{r-s}\mathbb{E}(<Z_{r},m(\cdot,s,t)>\left(\mathbf{1}_{\Omega_{r}}-1\right)\big|Z_s=\delta_x),\\
C(x,r,s,t)&=\frac{1}{r-s}\left(\mathbb{E}(<Z_{r},m(\cdot,s,t)>\big|Z_s=\delta_x)-m(x,s,t)\right).
\end{align*}
First, 
\begin{align*}
A(x,r,s,t)=\frac{1}{r-s}\left(\mathbb{E}\left(m(X_{r},s,t\big|X_s=x)\right)-\mathbb{E}\left(m(X_{r}^{\emptyset},s,t)\mathbf{1}_{\Omega_{r}}\middle|X_s^{\emptyset}=x\right)\right).
\end{align*}
Conditioning with respect to $\sigma(X_u,r\leq u\leq s)$ we obtain
\begin{align*}
A(x,r,s,t)=\frac{1}{r-s}\mathbb{E}\left(m(X_{r},s,t)\left(1-e^{-\int_s^{r}B(X_u)du}\right)\middle|X_s = x\right)\xrightarrow[r\rightarrow s]{} m(x,s,t)B(x).
\end{align*} 
Next, we have
\begin{align*}
B(x,r,s,t)&=-\frac{1}{r-s}\mathbb{E}(<Z_{r},m(\cdot,s,t)>\mathbf{1}_{\Omega_{r}^C}\big|Z_s=\delta_x).
\end{align*}
Then, let us denote $T_1$ the random variable corresponding to the lifetime of the first individual. Using the Markov property and the branching property, we have
\begin{multline*}
B(x,r,s,t)=-\frac{1}{r-s}\mathbb{E}\left(\sum_{u\in V_{r}}m(X_{r}^u,s,t)\mathbf{1}_{\{T_1<r\}}\middle|Z_s=\delta_x\right)\\
=-\frac{1}{r-s}\mathbb{E}\Bigg(\mathbf{1}_{\{T_1<r\}}\sum_{k\geq 0}p_k\left(X_{T_1}^{\emptyset}\right)
 \sum_{j=0}^k\int_0^1\mathbb{E}\left(\sum_{u\in V_{r}}m(X_{r}^u,s,t)\big|X_{T_1}^u = F_j^{(k)}(X_{T_1}^{\emptyset},\theta)\right)\\
 \times P_j^{(k)}(X_{T_1}^{\emptyset},d\theta)\Big|Z_s=\delta_x\Bigg).
\end{multline*}
Next, exhibiting the distribution of $T_1$ we obtain
\begin{align*}
B(x,r,s,t)=& -\frac{1}{r-s}\int_s^{r}\mathbb{E}\left(B\left(X_{v}^{\emptyset}\right)e^{-\int_s^v B\left(X_l^{\emptyset}\right)dl}\sum_{k\geq 0}p_k\left(X_{v}^{\emptyset}\right)\right.\\
&\left. \sum_{j=0}^k\int_0^1\mathbb{E}\left(\sum_{u\in V_{r}}m(X_{r}^u,s,t)\middle|X_{v}^u = F_j^{(k)}(X_{v}^{\emptyset},\theta)\right)P_j^{(k)}(X_{v}^{\emptyset},d\theta)\middle|Z_s=\delta_x\right).
\end{align*}
Finally,
\begin{align*}
B(x,r,s,t)\xrightarrow[r\rightarrow s]{} &-B(x)\sum_{k\geq 0}p_k(x)\sum_{j=1}^k \int_0^1 m\left(F_j^{(k)}(x,\theta),s,t\right)P_j^{(k)}(x,d\theta).
\end{align*}
For the last term, we have
\begin{align*}
C(x,r,s,t)&=\frac{1}{r-s}\left(\mathbb{E}(<Z_{r},m(\cdot,s,t)>\big|Z_s=\delta_x)-m(x,s,t)\right)\\
& = \frac{1}{r-s}\left(R_{s,r}(R_{s,t}\mathbf{1})(x)-R_{s,t}\mathbf{1}(x)\right)\\
& = \frac{1}{r-s}R_{s,r}\left(R_{s,t}\mathbf{1}-R_{r,t}\mathbf{1}\right)(x)\\
& = R_{s,r}\left(\frac{m(\cdot,s,t)-m(\cdot,r,t)}{r-s}\right)(x)\xrightarrow[r\rightarrow s]{}-\partial_s m(x,s,t),
\end{align*}
because according to the first point of Assumption \ref{assu:differentiable}, $h^{-1}(m(x,s+h,t)-m(x,s,t))$ converges uniformly for $x$ in compact sets when $h$ tends to zero.
Finally, 
\begin{align*}
\lim_{r\rightarrow s}\frac{\mathbb{E}(m(X_r,s,t)\big|X_s=x)-m(x,s,t)}{r-s},
\end{align*}
exists.
\section{Details of the proof of Theorem \ref{th:mto}}\label{app:monotone}
We detail here the use of the monotone-class theorem in the proof of Theorem \ref{th:mto}.

Using Remark \ref{rem:densite}, \eqref{mtomesurable} is satisfied for any function of the form $F=\mathbf{1}_{B_1}\ldots\mathbf{1}_{B_n}$, where $B_i$ are Borel sets, for $i=1\ldots n$, for all $n\in\mathbb{N}$. Let us define
\begin{align*}
\mathcal{H}=\left\lbrace F:\mathbb{D}\left([0,t],\mathcal{X}\right)\rightarrow \mathbb{R}_+ \text{ bounded and measurable satisfying }\eqref{mtomesurable} \right\rbrace,
\end{align*}
and
\begin{align*}
I=\left\lbrace\bigcap_{i=1}^n \left\lbrace x\in \mathbb{D}\left([0,t],\mathcal{X}\right),\ x(s_i)\in B_i\right\rbrace , n\in\mathbb{N},\ s_i\in\mathbb{R}_+,\ B_i \text{ Borel sets}\right\rbrace.
\end{align*}
First, $I$ is a $\pi$-system and $\sigma(I)=\mathcal{D}$ where $\mathcal{D}$ is the Borel $\sigma$-field associated with the Skorokod topology on $\mathbb{D}([0,t],\mathcal{X})$ (\cite[Theorem 12.5]{billingsley2013convergence}). Then, applying the monotone-class theorem (\cite[Theorem 3.14]{williams1991probability}), we obtain that $\mathcal{H}$ contains all bounded measurable functions with respect to the Skorokhod topology.
\bibliographystyle{plain}
\bibliography{bibli}
\end{document}